\numberwithin{equation}{section}
\newtheorem{theorem}{Theorem}[section]
\newtheorem{proposition}[theorem]{Proposition}
\newtheorem{lemma}[theorem]{Lemma}
\newtheorem{corollary}[theorem]{Corollary}
\theoremstyle{definition}
\newtheorem{definition}[theorem]{Definition}
\newtheorem{example}[theorem]{Example}
\newtheorem{remark}[theorem]{Remark}
\newcommand{\af}{\alpha_f}
\newcommand{\ahat}{\widehat{a}}
\newcommand{\al}{\alpha} 
\newcommand{\ardma}{\al_{\rd/\fa}}
\newcommand{\ardmf}{\al_{\rd/\<f\>}}
\newcommand{\aut}{\operatorname{aut}}
\newcommand{\bk}{\mathbf{k}}
\newcommand{\bm}{\mathbf{m}} 
\newcommand{\bn}{\mathbf{n}}
\newcommand{\bo}{\boldsymbol{\omega}}
\newcommand{\bq}{\mathbf{q}}
\newcommand{\bs}{\mathbf{s}} 
\newcommand{\bu}{\mathbf{u}} 
\newcommand{\bzero}{\boldsymbol{0}}
\newcommand{\CC}{\mathbb{C}}
\newcommand{\Da}{\Delta_{\al}}
\newcommand{\Daf}{\Delta_{\al_f}}
\newcommand{\Dafs}{\Delta_{\al_f}^1}
\newcommand{\Del}{\Delta}
\newcommand{\dist}{\operatorname{\mathsf{dist}}}
\newcommand{\DGj}{\Del_{\G,j}}
\newcommand{\dq}{\mathsf{d}_Q}
\newcommand{\dg}{\mathsf{d}_{\G}}
\newcommand{\dT}{\ensuremath|\hspace{-2.1pt}|}
\newcommand{\eps}{\varepsilon}
\newcommand{\fa}{\mathfrak{a}} 
\newcommand{\fg}{\mathfrak{g}}
\newcommand{\fhat}{\widehat{f}}
\newcommand{\ghat}{\widehat{g}}
\newcommand{\fhs}{\widehat{f^*}}
\newcommand{\ghs}{\widehat{g^*}}
\newcommand{\Fix}{\operatorname{\mathsf{Fix}}}
\newcommand{\FG}{\Fix_{\G}}
\newcommand{\FGn}{\Fix_{\G_n}}
\newcommand{\FGno}{\Fix_{\G_n}^\circ}
\newcommand{\FGo}{\Fix_{\G}^\circ}
\newcommand{\G}{\Gamma}
\newcommand{\Gn}{\G_n}
\newcommand{\h}{\mathsf{h}} 
\newcommand{\hhat}{\widehat{h}}
\newcommand{\Hm}{H_{\bm}}
\newcommand{\Hmj}{H_{\bm_j}}
\newcommand{\lam}{\lambda}
\newcommand{\lizdc}{\ell^\infty(\zd,\CC)}
\newcommand{\lizdr}{\ell^\infty(\zd,\RR)}
\newcommand{\lozdr}{\ell^1(\zd,\RR)}
\newcommand{\lozdc}{\ell^1(\zd,\CC)}
\newcommand{\lizdz}{\ell^\infty(\zd,\ZZ)}
\newcommand{\lzdgc}{\ell(\zd/\G,\CC)}
\newcommand{\lzdgr}{\ell(\zd/\G,\RR)}
\newcommand{\lzddr}{\ell(\zd/\Delta,\RR)}
\newcommand{\m}{\mathsf{m}}
\newcommand{\mf}{\mathfrak{m}_f}
\newcommand{\nf}{\mathfrak{n}_f}
\newcommand{\om}{\omega}
\newcommand{\Om}{\Omega}
\newcommand{\OG}{\Om_{\G}}
\newcommand{\OGn}{\Om_{\G_n}}
\newcommand{\p}{\mathsf{p}}
\newcommand{\sP}{\mathsf{P}}
\newcommand{\PG}{\sP_{\G}} 
\newcommand{\QQ}{\mathbb{Q}}
\newcommand{\rd}{R_d}
\newcommand{\rdma}{\rd/\fa}
\newcommand{\RR}{\mathbb R}
\newcommand{\sd}{\SS^{d}}
\newcommand{\SF}{\mathcal{F}}
\newcommand{\sig}{\sigma}
\newcommand{\sigt}{\tilde{\sigma}}
\renewcommand{\SS}{\mathbb{S}} 
\newcommand{\TT}{\mathbb{T}}
\newcommand{\tzd}{\TT^{\zd}}
\newcommand{\U}{\mathsf{U}} 
\newcommand{\V}{\mathsf{V}}
\newcommand{\vo}{v^{(\bo)}}
\newcommand{\xrdma}{X_{\rd/\fa}}
\newcommand{\xrdmf}{X_{\rd/(f)}}
\newcommand{\ZZ}{\mathbb{Z}} 
\newcommand{\zd}{\ZZ^{d}}
\renewcommand{\ge}{\geqslant}
\renewcommand{\le}{\leqslant}
\newcommand{\<}{\langle}
\renewcommand{\>}{\rangle}  %%% We don't use \> for tabbing
\renewcommand{\emptyset}{\varnothing}
\renewcommand\Re{\operatorname{Re}}
\renewcommand{\setminus}{\smallsetminus}
\begin{document}

\title[Homoclinic and Periodic Points]
{Homoclinic Points, Atoral Polynomials, \\ and
Periodic Points of Algebraic $\mathbb{Z}^d$-actions}

\author{Douglas Lind}

\address{Douglas Lind: Department of Mathematics, University of
  Washington, Seattle, Washington 98195, USA}
  \email{lind@math.washington.edu}

\author{Klaus Schmidt}

\address{Klaus Schmidt: Mathematics Institute, University of Vienna, 
Nordberg\-stra{\ss}e 15, A-1090 Vienna, Austria}
\email{klaus.schmidt@univie.ac.at}

\author{Evgeny Verbitskiy}

\address{Evgeny Verbitskiy: Mathematical Institute, University of
Leiden, PO Box 9512, 2300 RA Leiden, The Netherlands \newline\indent
\textup{and} \newline\indent Johann Bernoulli Institute for Mathematics
and Computer Science, University of Groningen, PO Box 407, 9700 AK,
Groningen, The Netherlands} 
\email{evgeny@math.leidenuniv.nl}

\date{\today}

\keywords{Entropy, periodic points, algebraic action}

\subjclass[2000]{Primary: 37A35, 37B40, 54H20; Secondary: 37A45,
  37D20, 13F20}

% 37A35 Entropy and other invariants, isomorphism, classification
% 37A45 Relations with number theory and harmonic analysis
% 37B40 Topological entropy
% 37D20 Uniformly hyperbolic systems (expanding, Anosov, Axiom A,
%       etc.)
% 54H20 Topological dynamics
% 13C15 Dimension theory, depth, related rings (catenary, etc.)
% 13F20 Polynomial rings and ideals; rings of integer-valued polynomials

\begin{abstract}
   Cyclic algebraic $\zd$-actions are defined by ideals of Laurent
   polynomials in $d$ commuting variables. Such an action is expansive
   precisely when the complex variety of the ideal is disjoint from the
   multiplicative $d$-torus. For such expansive actions it is known that
   the limit for the growth rate of periodic points exists and is equal to
   the entropy of the action. In an earlier paper the authors extended
   this result to ideals whose variety intersects the $d$-torus in a
   finite set. Here we further extend it to the case when the
   dimension of intersection of the variety with the $d$-torus is at
   most $d-2$. The main tool is the construction of homoclinic points
   which decay rapidly enough to be summable.
\end{abstract}

\maketitle

\section{Introduction}\label{sec:introduction}

An \emph{algebraic $\zd$-action} on a compact abelian group $X$ is a
homomorphism $\al\colon\zd\to\aut(X)$ from $\zd$ to the group of
(continuous) automorphisms of $X$.  We denote the image of $\bn\in\zd$
under $\al$ by $\al^{\bn}$, so that
$\al^{\bm+\bn}=\al^{\bm}\circ\al^{\bn}$ and
$\al^{\mathbf{0}}=\text{Id}_{X}$.

We will consider here cyclic algebraic $\zd$-actions, described as
follows, using notation and terminology from \cite{LSV}. Let
$\rd=\ZZ[u_1^{\pm1},\dots,u_d^{\pm1}]$ denote the ring of Laurent
polynomials with integer coefficients in the commuting variables
$u_1,\dots,u_d$. We write $f\in\rd$ as
$f=\sum_{\bm\in\zd}f_{\bm}\bu^{\bm}$, where $\bu=(u_1,\dots,u_d)$,
$\bm=(m_1,\dots,m_d)\in\zd$, $\bu^{\bm}=u_1^{m_1}\dots u_d^{m_d}$, and
$f_{\bm}\in\ZZ$ with $f_{\bm}=0$ for all but finitely many $\bm$.

Let $\TT=\RR/\ZZ$, and define the shift $\zd$-action $\sig$ on $\tzd$ by
\begin{displaymath}
   (\sig^{\bm}x)_{\bn}=x_{\bm+\bn}
\end{displaymath}
for $\bm\in\zd$ and $x=(x_{\bn})\in\tzd$. For $f=\sum
f_{\bm}\bu^{\bm}\in\rd$ we put
\begin{displaymath}
   f(\sig)=\sum_{\bm\in\zd}f_{\bm}\sig^{\bm}\colon \tzd\to\tzd.
\end{displaymath}

We identify $\rd$ with the dual group of $\tzd$ by setting
\begin{displaymath}
   \<f,x\>=e^{2\pi i f(\sig)(x)_{\mathbf{0}}} =e^{2 \pi i \sum_{\bm}f_{\bm}x_{\bm}}
\end{displaymath}
for $f\in\rd$ and $x\in\tzd$. In this identification the shift
$\sig^{\bm}$ is dual to multiplication by $\bu^{\bm}$ on $\rd$.
A closed subgroup $X\subset\tzd$ is shift-invariant if and only if its
annihilator
\begin{displaymath}
   X^{\perp}=\{h\in\rd:\<h,x\>=1\text{\ for every $x\in X$} \}
\end{displaymath}
is an ideal in $\rd$. In view of this we write, for every 
ideal $\fa$ in $\rd$,
\begin{displaymath}
   \xrdma=\fa^{\perp}=\{x\in\tzd:\<h,x\>=1 \text{\ for every $h\in\fa$}\}
\end{displaymath}
for the closed, shift-invariant subgroup of $\tzd$ annihilated by
$\fa$. Here the dual group of $\xrdma$ is $\rdma$. Denote by $\ardma$
the restriction of the shift-action $\sig$ on $\tzd$ to $\xrdma$. A
\emph{cyclic algebraic $\zd$-action} is one of this form, corresponding to
the cyclic $\rd$-module $\rdma$.

According to \cite[Eqn.\ (1-1)]{LSW} or \cite[Thm.\ 18.1]{DSAO}, the
topological entropy of $\ardma$, which coincides with its entropy with
respect to Haar measure on $\xrdma$, is given by
\begin{equation}
   \label{eq:entropy}
   \h(\ardma)= 
   \begin{cases}
      \infty &\text{if $\fa=\{0\}$}, \\
      \m(f)  &\text{if $\fa=\<f\>=f\cdot\rd$ for some nonzero
      $f\in\rd$,}\\
      0      &\text{if $\fa$ is nonprincipal,}
   \end{cases}
\end{equation}
where 
\begin{displaymath}
   \m(f)=\int_0^1\dots\int_0^1\,\log|f(e^{2\pi i t_1},\dots,e^{2\pi i
   t_d})|\,dt_1\dots dt_d
\end{displaymath}
is the \emph{logarithmic Mahler measure} of $f$.

An algebraic $\zd$-action $\al$ on a compact abelian group $X$ is
\emph{expansive} if there is a neighborhood $U$ of $0_X$ such that
$\bigcap_{\bm\in\zd}\al^{\bm}(U)=\{0_X\}$. To characterize expansiveness
for cyclic actions $\ardma$, let $\CC^\times$ denote $\CC\setminus\{0\}$
and let
\begin{displaymath}
   \V(\fa)=\{(z_1,\dots,z_d)\in (\CC^\times )^d :g(z_1,\dots,z_d)=0
   \text{\ for all $g\in\fa$}\}
\end{displaymath}
denote the complex variety of the ideal $\fa$. Put
$\SS=\{z\in\CC:|z|=1\}$, so that $\SS^d$ is the unit multiplicative
$d$-torus in $(\CC^\times)^d$. Define the \emph{unitary variety} of
$\fa$ as
\begin{displaymath}
   \U(\fa)=\V(\fa)\cap\SS^d =\{(z_1,\dots,z_d)\in\V(\fa):|z_1|=\dots=|z_d|=1\}.
\end{displaymath}
According to \cite[Thm.\ 6.5]{DSAO}, $\ardma$ is expansive if and only
if $\U(\fa)=\emptyset$. 

In order to describe periodic points for $\ardmf$, let $\SF$ denote the
collection of finite-index subgroups of $\zd$, and let $\G$ be an
arbitrary element of $\SF$. Define
\begin{displaymath}
   \<\G\>=\min\{\|\bm\|:\mathbf{0}\ne\bm\in\G\},
\end{displaymath}
where
$\|\bm\|=\max\{|m_1|,\dots,|m_d|\}$. A point $x\in X$ has \emph{period
$\G$} if $\al^{\bm}x=x$ for all $\bm\in\G$. Let
\begin{displaymath}
   \FG(\ardma)=\{x\in\xrdma: x \text{\ has period $\G$}\}
\end{displaymath}
be the closed subgroup of $\xrdma$ consisting of all $\G$-periodic
points. Since $\FG(\ardma)$ may be infinite (examples are given in
the next section), we reduce it to a finite object by
forming the quotient of $\FG(\ardma)$ by its connected component
$\FGo(\ardma)$ of the identity. We therefore define
\begin{displaymath}
   \PG(\ardma)=|\FG(\ardma)/\FGo(\ardma)|,
\end{displaymath}
where $|\cdot|$ denotes cardinality. 

Define the \emph{upper logarithmic growth rate} of the number of
periodic components of $\ardma$ as
\begin{equation}
   \label{eq:components}
   \p^+(\ardma)\coloneqq\limsup_{\<\G\>\to\infty}\,\frac1{|\zd/\G|}\,
   \log \PG(\ardma).
\end{equation}
For an arbitrary nonzero ideal $\fa$ it was proved in \cite[Thm.\
21.1]{DSAO} that $ \p^+(\ardma)=\h(\ardma)$. For expansive actions much
more is known.

\begin{theorem}
   \label{th:expansive-case}
   Let $\fa$ be an ideal for which $\ardma$ is expansive, or
   equivalently for which $\U(\fa)=\emptyset$. The the $\limsup$ in
   \eqref{eq:components} is actually a limit, i.e.,
   \begin{equation}
      \label{eq:limit-exists}
      \lim_{\<\G\>\to\infty}\,\frac1{|\zd/\G|}\,
      \log \PG(\ardma)=\h(\ardma).
   \end{equation}
\end{theorem}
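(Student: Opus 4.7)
Since the general result $\p^+(\ardma)=\h(\ardma)$ already supplies the upper bound $\limsup_{\<\G\>\to\infty}\frac{1}{|\zd/\G|}\log \PG(\ardma)\le \h(\ardma)$, I would focus on the matching lower bound under the expansiveness hypothesis $\U(\fa)=\emptyset$. The argument splits along the classification in \eqref{eq:entropy}. If $\fa$ is nonprincipal then $\h(\ardma)=0$, and since $0_X$ always lies in $\FG(\ardma)$ we have $\PG\ge1$, hence $\liminf\frac{1}{|\zd/\G|}\log\PG\ge0=\h(\ardma)$. The case $\fa=\{0\}$ is excluded because the full shift on $\tzd$ is not expansive. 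Thus the essential case is $\fa=\<f\>$ principal with $f$ nowhere zero on $\SS^d$.

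For this case I would compute $\PG(\af)$ explicitly through Pontryagin duality. The dual of $\FG(\af)$ is $\rd/(\<f\>+I_\G)$, where $I_\G=\<\bu^g-1:g\in\G\>$. The ring $\rd/I_\G\cong\ZZ[\zd/\G]$ is $\ZZ$-free of rank $N\coloneqq|\zd/\G|$, and multiplication by $f$ on it has determinant $\prod_{\chi}f(\chi)$, where $\chi$ ranges over the $N$ characters of $\zd/\G$, each naturally viewed as a point of $\SS^d$. Expansiveness forces $f(\chi)\ne0$ for every such $\chi$, so the cokernel of multiplication by $f$ is finite; dually, $\FG(\af)$ is finite, $\FGo(\af)=\{0\}$, and
\[
\PG(\af)=\Bigl|\prod_{\chi}f(\chi)\Bigr|=\prod_{\chi}|f(\chi)|,\qquad \frac{1}{N}\log\PG(\af)=\frac{1}{N}\sum_{\chi}\log|f(\chi)|.
\]
The right side is a Riemann sum for $\m(f)=\int_{\TT^d}\log|f|$, so it remains to prove convergence.

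Convergence rests on two ingredients: (i) the integrand $\log|f|$ is continuous on $\SS^d$, precisely because $\U(\<f\>)=\emptyset$ keeps $|f|$ bounded away from $0$ on the compact unit torus; and (ii) the sampling sets $\widehat{\zd/\G}\subset\SS^d$ equidistribute with respect to Haar measure as $\<\G\>\to\infty$. Ingredient (ii) is a standard dual consequence of $\<\G\>\to\infty$: the shortest nonzero vector of $\G$ controls the spacing of characters in the dual, so every continuous function on $\SS^d$ has its Riemann sums over $\widehat{\zd/\G}$ converging uniformly to the Haar integral.

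The main obstacle is handling general, possibly anisotropic, finite-index subgroups $\G$ in step (ii); a long thin lattice $\G$ can have $\<\G\>$ realized by a single direction, so one must verify that the character set $\widehat{\zd/\G}$ nevertheless samples $\SS^d$ finely in all directions. Alternatively, following the paper's homoclinic philosophy, one can bypass this subtlety by constructing a summable homoclinic point $w\in\lozdr$ from the Fourier coefficients of $1/f$ (which decay exponentially, since $1/f$ extends holomorphically across $\SS^d$ under $\U(\<f\>)=\emptyset$), and using the map $\xi\mapsto\sum_{g\in\G}\xi_{g}\sig^g w$ to parametrize $\FG(\af)$ directly. Counting components then reduces to the same determinant in $\ZZ[\zd/\G]$ as above, but obtained without invoking equidistribution—an approach the paper later generalizes to the non-expansive setting $\dim\U(\<f\>)\le d-2$.
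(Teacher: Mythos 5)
Your proposal is correct, and it is worth noting that the paper does not actually prove Theorem \ref{th:expansive-case} at all: it is quoted as known background (the expansive case goes back to \cite{LSW} and \cite[Ch.~V]{DSAO}), and what you have written is essentially that classical argument. Your route: duality gives $\widehat{\FG(\af)}\cong R_d/(\<f\>+I_\G)\cong\ZZ[\zd/\G]/f\,\ZZ[\zd/\G]$, expansiveness makes the determinant $\prod_{\bo\in\OG}f(\bo)$ nonzero, hence $\PG(\af)=\prod_{\bo\in\OG}|f(\bo)|$ exactly, and then one only needs that Riemann sums of the \emph{continuous} function $\log|f|$ over $\OG$ converge to $\m(f)$. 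Your worry about anisotropic $\G$ dissolves more cleanly than you suggest: the normalized counting measure on the subgroup $\OG\subset\SS^d$ has Fourier coefficient at $\bn\in\zd$ equal to $1$ if $\bn\in\G$ and $0$ otherwise, so for any fixed $\bn\ne\bzero$ it vanishes once $\<\G\>>\|\bn\|$; by Weyl's criterion the measures converge weak* to $\lam$ and no spacing/covering-radius estimate is needed. (Also note that for nonprincipal $\fa$ you correctly reduce to $\PG\ge1$ together with the cited equality $\p^+=\h$.) The paper's own machinery — summable homoclinic points, symbolic covers and specification, separated versus spanning sets, and Mann's theorem to control $\FGo$ — is designed precisely for the situation your argument cannot touch, namely $\U(f)\ne\emptyset$, where $\log|f|$ is unbounded near $\U(f)$, the product formula must omit (and correct for) zeros of $f$ on $\OG$, and equidistribution of $\OG$ no longer controls the Riemann sums; in the expansive case that machinery specializes (the Fourier coefficients of $1/f$ decay exponentially) and gives an alternative proof, but your direct computation is shorter and buys an exact formula for $\PG(\af)$, while the paper's approach buys robustness under the weaker hypothesis $\dim\U(f)\le d-2$. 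Your closing suggestion that the homoclinic parametrization yields the count "without equidistribution" is the one loose claim: in the paper that approach still needs the upper bound $\p^+(\af)=\h(\af)$ and a separated-set lower bound rather than an exact determinant count, so it is not literally the same computation; since your main line is complete, this is only a side remark.
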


It is not known whether \eqref{eq:limit-exists} holds for \emph{all}
cyclic actions with nonzero $\fa$ (when $\fa=0$ the left side is 0 and
the right side is $\infty$). Even when $d=1$ the existence of this limit
involves a deep diophantine estimate due to Gelfond \cite{G} (see
Section \ref{sec:diophantine}, or \cite[Sec.\ 4]{L} for details). In
\cite{LSV} we proved the following partial result.

\begin{theorem}
   \label{th:finite-variety}
   Let $d\ge2$ and $\fa$ be an ideal in $\rd$ whose unitary variety
   $\U(\fa)$ is a finite set. Then \eqref{eq:limit-exists} holds.
\end{theorem}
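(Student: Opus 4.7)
The plan is to isolate the one genuinely new case and attack it via a summable homoclinic point, as the abstract suggests. First I would reduce to a principal ideal: for a nonprincipal $\fa$, \eqref{eq:entropy} gives $\h(\ardma)=0$, while $\PG(\ardma)\ge 1$ forces $\liminf_{\<\G\>\to\infty}|\zd/\G|^{-1}\log\PG(\ardma)\ge 0$; combined with the identity $\p^+(\ardma)=\h(\ardma)=0$ cited from \cite{DSAO}, this makes \eqref{eq:limit-exists} trivial. For principal $\<f\>$ with $\U(\<f\>)=\emptyset$, Theorem \ref{th:expansive-case} applies directly. The outstanding case is $\fa=\<f\>$ with $\U(\<f\>)=\{\bxi_1,\dots,\bxi_r\}\subset\SS^d$ a finite nonempty set, where I must establish $\liminf_{\<\G\>\to\infty}|\zd/\G|^{-1}\log\PG(\ardmf)\ge\m(f)$.

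The centerpiece is a summable homoclinic point $w\in \xrdmf$, i.e., a point with $f(\sig)w=0$ in $\tzd$ and $\sum_\bm|w_\bm|<\infty$. In the expansive case $w$ is obtained by reducing mod $1$ the Fourier coefficients of the smooth function $1/f$ on $\TT^d$, which decay exponentially. When $\U(\<f\>)$ is a finite nonempty set, I would instead choose a Laurent polynomial $g\in\rd$ vanishing at each $\bxi_j$ to an order much greater than the local vanishing order of $f$ there; because the $\bxi_j$ are isolated, a \L{}ojasiewicz-type local estimate makes it possible to take $g$ so that $g/f$ is of class $C^{d+1}$ on $\TT^d$ and hence has absolutely summable Fourier coefficients $w_\bm$ by Bernstein-type decay. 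Because the Fourier coefficients of the polynomial $g$ are integers, $f(\sig)w$ is integer-valued, so that $w\bmod 1\in \xrdmf$ is a summable homoclinic point.

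From $w$ I generate many $\G$-periodic points. For $\G\in\SF$, convolving a $\G$-periodic integer configuration $c\in\ell(\zd/\G,\ZZ)$ with $w$ (the sum converges by summability) produces an element of $\FG(\ardmf)$ whose Fourier transform at a $\G$-dual character $\bxi\in\widehat{\zd/\G}\subset\SS^d$ is $\hat c(\bxi)g(\bxi)/f(\bxi)$. Varying $c$ and passing to the quotient $\FG/\FGo$ gives a homomorphism whose image has size matching, up to subexponential factors, the product $\prod_{\bxi\in\widehat{\zd/\G},\,f(\bxi)\ne 0}|f(\bxi)|$. Taking logarithms and dividing by $|\zd/\G|$, this product is a Riemann sum for $\m(f)=\int_{\TT^d}\log|f|$ with the finitely many $\G$-dual points where $f$ vanishes excised.

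The principal obstacle is the diophantine estimate controlling the contribution of $\G$-dual points very close to $\U(\<f\>)$. One needs, for each $\bxi_j\in\U(\<f\>)$, a lower bound $|f(\bxi)|\ge\<\G\>^{-C}$ whenever $\bxi$ is a $\G$-dual point in a small neighborhood of $\bxi_j$; such a bound contributes only $O(\log\<\G\>)=o(|\zd/\G|)$ to the log-sum. When $\bxi_j$ is a root of unity the required bound follows from an elementary lattice argument on the $\G$-dual points near $\bxi_j$; when $\bxi_j$ has algebraic coordinates it rests on Gelfond's theorem and its multivariate extension via Baker's bounds on linear forms in logarithms; for general $\bxi_j\in\SS^d$ the finiteness of $\U(\<f\>)$ is essential, reducing the problem to a finite collection of independent one-point diophantine statements. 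Combining this control with the Riemann-sum argument above yields $\liminf_{\<\G\>\to\infty}|\zd/\G|^{-1}\log\PG(\ardmf)\ge\m(f)=\h(\ardmf)$, completing the proof.
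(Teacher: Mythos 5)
Your first half tracks the paper's method: for finite $\U(f)$ and $d\ge2$ one has $\dim\U(f)\le d-2$, so $f$ is atoral (Proposition \ref{prop:atoral}), and a polynomial $g$ vanishing to high order on $\U(f)$ makes $g/f$ smooth enough (via a {\L}ojasiewicz bound, as in Lemma \ref{lem:radical}) to have absolutely convergent Fourier series, yielding a summable homoclinic point (Theorem \ref{thm:summable-points}); you should still check $g\notin(f)$, and your reduction step ignores reducible principal ideals, but these are repairable. The genuine gap is in your last paragraph: you route the lower bound for $\log\PG(\af)$ through the Riemann sums $\frac1{|\OG|}\sum_{\bo\in\OG\smallsetminus\U(f)}\log|f(\bo)|$ and then require a diophantine lower bound such as $|f(\bo)|\ge\<\G\>^{-C}$ for torsion points $\bo$ near $\U(f)$, to be supplied by ``Gelfond's theorem and its multivariate extension via Baker's bounds.'' No such off-the-shelf multivariate extension exists; this is precisely the diophantine problem the paper emphasizes is delicate (and unsolved in the generality of Theorem \ref{th:main}), and the entire point of the homoclinic-point method in \cite{LSV} and in Section \ref{sec:main} is to \emph{bypass} it. The elementary conjugate/norm bound for the algebraic number $f(\bo)$ only gives $|f(\bo)|\ge e^{-C|\zd/\G|}$, which is off by exactly the factor that matters, and Baker-type estimates do not obviously apply to $|f(\bo)|$ for $\bo$ close to an arbitrary algebraic point of $\U(f)$; in the paper the inequality you want appears as Corollary \ref{cor:lowerbound}, i.e.\ as a \emph{consequence} of the theorem, not an ingredient. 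You would also need to control how many points of $\OG$ can cluster near $\U(f)$, which is not automatic since $\OG$ need not be well spaced at scale $1/\<\G\>$.

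Your intermediate counting step is also asserted rather than proved: the claim that convolving $\G$-periodic integer configurations with the homoclinic point gives, modulo $\FGo(\af)$, about $\prod_{\bo\in\OG\smallsetminus\U(f)}|f(\bo)|$ classes ``up to subexponential factors'' is essentially \eqref{eq:perptcalcerror}, which the paper notes is not exact (one must divide by $c_{\G}(f)$), and whose asymptotic correctness is itself an output of the argument, not an input. The paper's route avoids both difficulties: the summable homoclinic point gives that $\FG(\af)$ is $(Q',\eps)$-spanning in $X_f$ (Lemma \ref{lem:shrink}), hence contains $(\G,\eps)$-separated sets of cardinality $e^{|\zd/\G|(\h(\af)-o(1))}$ by \cite[Prop.\ 2]{D}, and the collapse onto cosets of $\FGo(\af)$ is subexponential --- in the finite-$\U(f)$ case this is immediate because $\dim\FGo(\af)\le|\U(f)|$ is bounded, so no Mann-type input is needed. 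If you replace your final paragraph by this separated-versus-spanning count, the diophantine estimate disappears and the proof closes; as written, the key analytic difficulty is assumed rather than resolved.
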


The proof of Theorem \ref{th:finite-variety} in \cite{LSV} depends on
the crucial fact that if $\U(\fa)$ is finite, then all of its points
have coordinates which are algebraic numbers. With this information
Theorem \ref{th:finite-variety} can be deduced from Gelfond's
estimate. However, the route taken in \cite{LSV} is different, and
bypasses the diophantine issue: the algebraicity of the points in
$\U(\fa)$ allows the construction of certain well-behaved (so-called
\emph{summable}) homoclinic points of $\ardma$, which are then used to
prove not only Theorem \ref{th:finite-variety} but also a strong
specification property of the action $\ardma$ which is of independent
interest. We remark in passing that this approach involving summable
homoclinic points first appeared in \cite{SV} in the construction of
symbolic covers for a special class of cyclic $\zd$-actions, the
\emph{harmonic} actions.

In order to state our main result, we need to recall some notions from
real algebraic geometry. Let $z_j=x_j+iy_j$, and consider $\CC^d$ as
$\RR^{2d}$ with coordinates $(x_1,y_1,\dots,x_d,y_d)$. Then $\SS^d$ as a
subset of $\RR^{2d}$ is the real algebraic set defined by the equations
$x_j^2+y_j^2-1=0$ for $1\le j\le d$. For
$f(z_1,\dots,z_d)\in\CC[z_1,\dots,z_d]$, we can expand $f$ into its real
an imaginary polynomial parts, so that
\begin{equation}
   \label{eqn:real-expansion}
   f(x_1+iy_1,\dots,x_d+iy_d)=
   f_1(x_1,y_1,\dots,x_d,y_d)+if_2(x_1,y_1,\dots,x_d,y_d), 
\end{equation}
where $f_1,f_2\in\RR[x_1,y_1,\dots,x_d,y_d]$ have real
coefficients. Then $\U(f)$ is the real algebraic set defined by the
equations $x_j^2+y_j^2-1=0$ for $1\le j\le d$ together with two further
equations $f_1(x_1,y_1,\dots,x_d,y_d)=0$ and
$f_2(x_1,y_1,\dots,x_d,y_d)=0$. This discussion extends to Laurent
polynomials $f\in\rd$ by observing that there is a $\bm\in\zd$ with
$\bu^{\bm}f(\bu)\in\ZZ[u_1,\dots,u_d]$ and that $\U(\bu^{\bm}f)=\U(f)$.
Every ideal $\fa$ is finitely generated, say by $g_1,\dots,g_r$. Hence
its unitary variety $\U(\fa)=\U(g_1)\cap\dots\cap\U(g_r)$ is again a
real algebraic subset of $\RR^{2d}$.

The cell decomposition theorem for real semialgebraic sets \cite[Thm.\
2.11]{vdD} therefore applies, and so $\U(\fa)$ can be written as a
finite disjoint union of open cells of various dimensions. The
\emph{dimension} of $\U(\fa)$ is defined to be the maximum dimension of
these cells, and this number is the same for all possible cell
decompositions. By convention we define $\dim\emptyset=-\infty$.
There is a finite algorithm \cite[Algorithm 14.10]{BPR} for computing
the dimension of a real semialgebraic set defined by polynomials with
rational coefficients. 

Our main result is the following extension of Theorem
\ref{th:finite-variety}.

\begin{theorem}
   \label{th:main}
   Let $d\ge2$ and let $\fa$ be an ideal in $\rd$. If the dimension of
   $\U(\fa)$ is at most $d-2$, then
   \begin{displaymath}
       \lim_{\<\G\>\to\infty}\,\frac1{|\zd/\G|}\,
      \log \PG(\ardma)=\h(\ardma).
   \end{displaymath}
\end{theorem}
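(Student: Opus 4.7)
The plan is to extend the summable-homoclinic-point strategy of \cite{LSV} from the zero-dimensional case $\dim\U(\fa)\le 0$ to the case $\dim\U(\fa)\le d-2$. The upper bound $\limsup\le\h(\ardma)$ is already given by \cite[Thm.\ 21.1]{DSAO}, so the entire content of the theorem is the matching lower bound. I would first reduce to the principal case: if $\fa$ is nonprincipal then $\h(\ardma)=0$ by \eqref{eq:entropy}, the cited limsup result gives $\p^+(\ardma)=0$, and $\log\PG(\ardma)\ge 0$ forces the limit to exist and to equal $0$. Hence I may assume $\fa=\<f\>$ for a nonzero $f\in\rd$, and the task becomes proving $\liminf_{\<\G\>\to\infty}|\zd/\G|^{-1}\log\PG(\ardmf)\ge\m(f)$.

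The analytic heart of the proof is to produce a \emph{summable homoclinic point} for $\ardmf$: an element $w\in\lozdr$ satisfying $f(\sig)w\equiv\delta_{\mathbf{0}}\pmod{\zd}$, where $\delta_{\mathbf{0}}$ is the indicator of the origin. Formally $w$ is the Fourier series on $\SS^d\cong\TT^d$ of an inverse of $f$, and the only obstruction to its smoothness is the singularity on $\U(f)$. Under the hypothesis $\dim\U(f)\le d-2$ this set has real codimension at least two in $\SS^d$, which I would exploit as follows: decompose $\U(f)$ into finitely many semialgebraic cells of dimension at most $d-2$, invoke a Lojasiewicz inequality $|f(z)|\ge c\,\dist(z,\U(f))^N$ on a neighbourhood of $\U(f)$, and regularise $1/f$ with a smooth partition of unity subordinate to tubular neighbourhoods shrinking toward $\U(f)$. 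Because tubes of radius $\eps$ around a codimension-two set have volume $O(\eps^2)$, the regularisation lies in $L^1(\TT^d)$, and repeated integration by parts against the partition of unity yields the $\ell^1$ decay of its Fourier coefficients needed to define $w$.

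Once $w$ is available, the lower bound on periodic points follows along the lines of \cite{LSV}. For each finite-index subgroup $\G$ and each $g\in\rd$, the point $g(\sig)w\in\lozdr$ is again summable and homoclinic, and its $\G$-periodisation $\sum_{\bm\in\G}\sig^{\bm}g(\sig)w$ converges in $\tzd$ to an element of $\FG(\ardmf)$. Letting $g$ range over coset representatives for a suitable finite quotient of $\rd$ associated with $f$ and $\G$, the summability of $w$ shows that distinct $g$'s give distinct classes in $\FG(\ardmf)/\FGo(\ardmf)$ once $\<\G\>$ is large. Counting the representatives and applying the Lind--Schmidt--Ward determinant asymptotic \cite{LSW} for the index of $f(\sig)$ on $\zd/\G$ then gives the required $\liminf|\zd/\G|^{-1}\log\PG(\ardmf)\ge\m(f)$.

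The principal obstacle is the first step, the construction of the summable homoclinic point from only the codimension-two hypothesis. In \cite{LSV} the finiteness and algebraicity of $\U(f)$ admitted a relatively explicit construction of $w$; here $\U(f)$ can be a genuinely positive-dimensional real semialgebraic set, so that algebraic shortcut is unavailable and must be replaced by a quantitative combination of real-algebraic geometry (cell decomposition and Lojasiewicz bounds near $\U(f)$) with harmonic-analytic integration by parts, made uniform in the Fourier direction. Carrying this out in the required generality is the central technical difficulty of the paper.
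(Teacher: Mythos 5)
Your reduction to the principal case and your identification of the two main tasks are fine, but the sketch of both tasks has genuine gaps. The central one is the construction of the summable homoclinic point. Obtaining $w\in\lozdr$ with $f(\sigt)w$ integer-valued by regularising $1/f$ and appealing to the codimension-two volume bound does not work: integrability of $1/f$ on $\sd$ is far from enough for $\ell^1$ decay of Fourier coefficients. The paper's own Example \ref{exam:harmonic} ($f=2-u-v$, $\U(f)=\{(1,1)\}$, dimension $0$) has $1/f^*\in L^1(\SS^2)$ but Fourier coefficients decaying like $n^{-1/2}$ along the diagonal, hence not summable; and integrating by parts against a partition of unity only produces higher negative powers of $f$, which are worse near $\U(f)$. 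Moreover, once you cut off $1/f$ near $\U(f)$ the relation $f(\sigt)w\in\lizdz$ is destroyed, so the resulting sequence does not even project to a point of $X_f$, let alone a homoclinic one. What the paper actually does is multiply $1/\fhat$ by a high power of an \emph{integer Laurent polynomial} $g$ vanishing on $\U(f)$ with $g\notin(f)$: the {\L}ojasiewicz inequality makes $\ghat^K/\fhat$ smooth enough to have absolutely convergent Fourier series, and the identity $f*v=g^K$ keeps $\eta(v^*)$ inside $X_f$. The existence of such a $g$ is exactly atorality of $f$, and the implication $\dim\U(f)\le d-2\Rightarrow f$ atoral (Proposition \ref{prop:atoral}) is a nontrivial algebraic step — a symmetry dichotomy in which either $g=f^*$ works, or, after symmetrising, all partials $\partial f/\partial u_j$ vanish on $\U(f)$ and one of them serves as $g$ — which your harmonic-analytic regularisation cannot replace and which your proposal omits entirely.

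The counting step also conceals the two problems the paper is built to solve. Invoking a determinant asymptotic for the index of $f(\sig)$ on $\ell(\zd/\G)$ amounts to controlling $\prod_{\bo\in\OG}|f(\bo)|$, and the factors with $\bo$ very close to $\U(f)$ are precisely the unsolved multidimensional diophantine problem; the paper avoids any such product by producing $(\G_n,\eps)$-separated sets in $\FGn(\af)$ from the spanning/specification property of the homoclinic point and invoking Deninger's result \eqref{eqn:liminf} to recover $\h(\af)$. Second, your assertion that distinct periodisations give distinct classes in $\FG(\af)/\FGo(\af)$ for large $\<\G\>$ is unsupported, and this is where the real difficulty sits: $\FGo(\af)$ is a torus of dimension $|\OG\cap\U(f)|$, which can be unbounded (for $f=1+u+v+w$ there are about $3n$ such torsion points for $\G_n=n\ZZ^3$), so one must bound uniformly the number of separated points lying in a single coset of $\FGo(\af)$. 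The paper does this with Mann's theorem — all torsion points of $\U(f)$ lie in finitely many subgroups $H_{\bm_j}$ — together with Lemmas \ref{lem:slice}--\ref{lem:capture}, which show the per-coset count grows subexponentially in $|\zd/\G_n|$. Without some substitute for this mechanism your lower bound does not go through.
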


In \cite{LSV} it is explained how to use the algebraic machinery in
\cite[Sec.\ 21]{DSAO} to reduce the proof of Theorem \ref{th:main} to the
case when $\fa$ is prime and principal. In view of this we assume from
now on that $\fa=(f)=f \rd$ for some nonzero irreducible Laurent
polynomial $f\in\rd$. In this case we call the shift-action
$\af\coloneqq\ardmf$ on the group $X_f\coloneqq \xrdmf$ a
\emph{principal algebraic $\zd$-action}.

Let $\Om$ denote the set of torsion points in $\SS^d$, so the
coordinates of points in $\Om$ are roots of unity. For $\G\in\SF$ let
\begin{displaymath}
   \OG\coloneqq\{\bo\in\Om:\bo^{\bm}=\om_1^{m_1}\cdots\om_d^{m_d}=1
   \text{ for every $\bm\in\G$}\}.
\end{displaymath}
Basic duality shows that $\OG$ is the dual group of $\zd/\G$, and so
$|\OG|=|\zd/\G|$.

As mentioned above, according to \cite[Thm.\ 21.1]{DSAO},
\begin{displaymath}
   \p^+(\af):=\limsup_{\<\G\>\to\infty}\frac1{|\zd/\G|}
   \log\PG(\af)=\h(\af)=\m(f)=
   \int_{\SS^d}\log|f(\bs)|\,d\lambda(\bs),
\end{displaymath}
where $\lambda$ is normalized Lebesgue measure on $\SS^d$. In 
\cite[Lemma 2.1]{LSV} we claimed that
\begin{equation}
   \label{eq:perptcalcerror}
   \PG(\af)=\prod_{\bo\in\OG\smallsetminus\U(f)}|f(\bo)|.
\end{equation}
However, using the notation from the proof there, we in fact need to
divide the right-hand side of \eqref{eq:perptcalcerror} by
\begin{displaymath}
   c_{\Gamma}(f)=|f(\widetilde{\sigma})(V_{\Gamma}(\ZZ))/f(\widetilde{\sigma})
   (V_{\Gamma}'(\ZZ))|.
\end{displaymath}
The proofs of the main results in both \cite{LSV} and here do not depend
on \eqref{eq:perptcalcerror}, and show that
\begin{displaymath}
   \frac1{|\zd/\G|}\log c_{\Gamma}(f)\to0\text{\quad as \quad}\<\G\>\to\infty,
\end{displaymath}
so that  \eqref{eq:perptcalcerror} is asymptotically correct.

Observe that since $|\zd/\G|=|\OG|$, we are dealing with sums of the form
\begin{equation}
   \label{eq:riemann-sum}
   \frac1{|\OG|} \sum_{\bo\in\OG\smallsetminus\U(f)}\log|f(\bo)|,
\end{equation}
which are Riemann sum approximations to
$\int_{\SS^d}\log|f|\,d\lambda=\m(f)$. Hence proving the existence of
the limit in Theorem \ref{th:main} is exactly the same as proving that
these Riemann sums for $\log|f|$ converge to its integral over $\SS^d$ as
$\<\G\>\to\infty$. In trying to prove convergence of these Riemann sums
one encounters two problems.

The first problem involves the omission of summands in
\eqref{eq:riemann-sum} with $\bo\in\OG\cap\U(f)$. As we will see, each
such $\bo$ contributes one dimension to $\FGo(\af)$, so that $\FGo(\af)$
is a torus of dimension $|\OG\cap\U(f)|$.  This omission is necessary, of course,
since any summand with $f(\bo)=0$ would contribute~ $-\infty$ to the
Riemann sum. This situation was easily dealt with in the case $\U(f)$ is
finite \cite{LSV} by observing that the dimension of this torus is then
bounded, so there is an easy bound on the number of points in any
separated set of any closed subgroup. However, here the dimension of
this torus can be unbounded (see Example \ref{exam:1+x+y+z}). We control
this by invoking a result of Mann \cite{Mann} that implies that all torsion points
in $\U(f)$ lie in a finite union of cosets of rational subtori, and this
provides a sufficiently uniform estimate for separated sets.

The second problem involves those $\bo\in\OG\smallsetminus\U(f)$ which
may be very close to $\U(f)$, and is much more serious. For these
points the value $|f(\bo)|$ will be extremely small, but nonzero, and so
$\log|f(\bo)|$ could conceivably take such a large negative value that
the average value in \eqref{eq:riemann-sum} is significantly less than
$\m(f)$. Can this happen for a sequence of $\G_n$ with
$\<\G_n\>\to\infty$? This problem is essentially a multidimensional
version of the diophantine problem mentioned earlier.

Unfortunately this diophantine problem is unsolved in the generality
required here, which forces us to impose the additional hypothesis that
$\U(f)$ has dimension at most $d-2$. This hypothesis turns out to be
equivalent to the existence of summable homoclinic points (Theorem
\ref{thm:summable-points}), and allows us to extend the approach
developed in \cite{SV} and \cite{LSV} to this more general situation,
thereby proving Theorem \ref{th:main}. As a side benefit of this
approach, we obtain in Corollary \ref{cor:lowerbound} a diophantine
estimate concerning the proximity of torsion points to the variety
$\U(f)$ under the hypothesis that $\dim \U(f)\le d-2$. This result is
completely analogous to Gelfond's estimate. In the case when $\dim \U(f)=
d-1$ (which when $d=1$ reduces to the setting of Gelfond's estimate),
our approach yields a slightly weaker estimate (Corollary
\ref{cor:weak-gelfond}).

The question of whether entropy coincides with the logarithmic growth
rate of the number of connected components in $\FG(\af)$ as
$\<\G\>\to\infty$ irrespective of whether the principal action is
expansive or not is a special case of a much more general problem. For a
principal algebraic $\zd$-action $\af$ the entropy was proved in
\cite{LSW} to be equal to the logarithmic Mahler measure of the Laurent
polynomial $f\in R_d$, and to the upper logarithmic growth rate of
$\PG(\af)$ in \cite{DSAO}. For an expansive principal algebraic action
$\af$ of an arbitrary residually finite countable amenable group $G$,
the entropy $\h(\af)$ was identified in \cite{DS} with an appropriately
defined logarithmic growth rate of the number of points with finite
orbits of $\af$ and, as a consequence, with the Fuglede-Kadison
determinant associated with the element $f$ in the integral group ring
$\ZZ[G]$, acting on the group von Neumann algebra
$\mathcal{N}[G]$. Without the hypothesis of expansiveness, the
relationship between entropy, logarithmic growth rate of periodic points
(or periodic components), and Fuglede-Kadison determinants is still
completely open. In this sense, the equality of $\p^+(\af)$ and
$\h(\af)$, and the more precise version in Theorem \ref{th:main} are of
interest, since they provide links between these objects without the
hypothesis of expansiveness (but obviously under very special
circumstances).

The authors are grateful to the Erwin Schr\"{o}dinger International
Institute for Mathematical Physics, the Max Planck Institute for
Mathematics, FWF Grant S9613, the University of Washington Mathematics
Department, the Lorentz Center in Leiden, and Microsoft Research for
their generous support of this work.

\section{Atoral polynomials}
\label{sec:homoclinic-atoral}

Here we characterize those polynomials for which our techniques apply.
Recall that the units of $\rd$ are $\pm\bu^{\bn}$, where $\bn$ is an
arbitrary element of $\zd$. We say that a Laurent polynomial in $\rd$ is
\emph{irreducible} if it is not a unit and if it has no factorizations
apart from units.

The basic notion of atorality is motivated by the paper of
Agler, McCarthy, and Stankus \cite{AMS}.

\begin{definition}
   An irreducible Laurent polynomial $f\in R_d$ is called \emph{toral}
   if $h$ is in the ideal $(f)$ whenever $h$ is in $R_d$ with
   $\U(h)\supseteq\U(f)$. Otherwise $f$ is called \emph{atoral}. A
   general Laurent polynomial is called \emph{atoral}
   if each of its irreducible factors is atoral; otherwise it is called
   \emph{toral}. 
\end{definition}

In \cite{AMS} the notion of atorality of polynomials with complex
coefficients is introduced and studied. One main result there is that
such a polynomial is atoral if and only if its unitary variety is
contained in an algebraic set of (complex) dimension $d-2$. Our setting,
involving polynomials with rational coefficients and a corresponding
notion of irreducibility, is somewhat different. However, we obtain a
similar result, whose proof follows closely the spirit of \cite{AMS}.

\begin{proposition}
   \label{prop:atoral}
   Let $f\in R_d$ be irreducible. Then $f$ is atoral if and only if
   $\dim \U(f)\le d-2$.
\end{proposition}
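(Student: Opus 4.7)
The plan is to mirror the complex-coefficient argument of \cite{AMS}, supplementing it with a Galois-descent step to handle the integer-coefficient setting. Both implications will rest on the following \emph{tangent-space lemma}: for every complex algebraic subvariety $V\subseteq(\CC^\times)^d$ of complex dimension $m$, one has $\dim_\RR(V\cap\SS^d)\le m$. To see this, at any regular point $p\in V\cap\SS^d$ the real tangent space $T_p(V\cap\SS^d)$ is contained in $T_pV\cap T_p\SS^d$, which is a totally real subspace of $T_pV$ (since it lies inside the totally real $T_p\SS^d$); and any totally real subspace of a complex $m$-dimensional space has real dimension at most $m$.

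For the forward direction, suppose $f$ is atoral and fix $h\in R_d\setminus(f)$ with $\U(h)\supseteq\U(f)$. Factoring $f=c\prod_{i=1}^{k}g_i$ over $\CC$ into complex irreducibles and applying Gauss's lemma (to the primitive polynomial $f$) upgrades $h\notin(f)$ to $h\notin(f)$ in $\CC[\bu^{\pm1}]$, so $g_{i_0}\nmid h$ for some $i_0$. Then $W:=\V(g_{i_0})\cap\V(h)$ is a proper complex subvariety of the irreducible $\V(g_{i_0})$ (of complex dimension $d-1$), hence has complex dimension at most $d-2$. Since $\U(g_{i_0})\subseteq W\cap\SS^d$, the tangent-space lemma gives $\dim\U(g_{i_0})\le d-2$. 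As $f$ is $\QQ$-irreducible, $\mathrm{Gal}(\overline\QQ/\QQ)$ acts transitively on the $g_i$'s while fixing $\SS^d$ setwise, so $\dim\U(g_i)\le d-2$ for every $i$ and $\dim\U(f)=\max_i\dim\U(g_i)\le d-2$.

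For the backward direction, suppose $\dim\U(f)\le d-2$ and aim to construct the required $h\in R_d\setminus(f)$. The key step is the converse dimension bound $\dim_\CC Z_\QQ\le d-2$, where $Z_\QQ$ is the $\QQ$-Zariski closure of $\U(f)$ in $(\CC^\times)^d$. One obtains it as follows: locally at a smooth point of $\U(f)$, the real-analytic equations defining $\U(f)$ inside $\SS^d$ extend, via the reflection principle applied to the anti-holomorphic involution $\tau\colon\bu\mapsto\overline{\bu}^{-1}$ (whose fixed set is $\SS^d$), to holomorphic equations on a neighborhood in $(\CC^\times)^d$, whose common zero locus is a complex analytic subvariety of complex dimension $\dim_\RR\U(f)$ containing $\U(f)$; algebraicity of the global analytic closure of $\U(f)$ together with Galois invariance (as $\U(f)$ is defined over $\QQ$) then give the bound on $\dim_\CC Z_\QQ$. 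Since $d-2<d-1=\dim_\CC\V(f)$, this forces $Z_\QQ\subsetneq\V(f)$; picking $p\in\V(f)\setminus Z_\QQ$ produces some $h\in I_\QQ(\U(f))$ with $h(p)\ne0$, and clearing denominators yields $h\in R_d$ vanishing on $\U(f)$ with $h\notin(f)$ (the latter since $f(p)=0\ne h(p)$).

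The main obstacle is the converse dimension bound $\dim_\CC Z_\QQ\le\dim_\RR\U(f)$. In the non-reciprocal case (when $f$ and the reciprocal $f^*(\bu)=f(\bu^{-1})$ are coprime in $R_d$) one can short-circuit the whole argument and simply take $h=f^*$: indeed $\U(f^*)=\U(f)$ while $h\notin(f)$ by irreducibility. The genuinely hard case is when $f$ is reciprocal ($f^*=\pm\bu^{\bn}f$); there $\V(f)\cap\V(f^*)=\V(f)$, so the naive extension via $f$ and $f^*$ alone gives nothing, and one must exploit the fact that $\dim\U(f)\le d-2$ forces $f$ to touch zero tangentially along $\SS^d$, thereby producing additional vanishing equations (such as derivatives of $f$) that cut $\U(f)$ out of a lower-dimensional complex subvariety.
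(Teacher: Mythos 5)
Your forward direction contains a false step: a field automorphism $\sigma\in\mathrm{Gal}(\overline\QQ/\QQ)$ does \emph{not} fix $\sd$ setwise, since it does not respect absolute values, so you cannot transport the bound $\dim\U(g_{i_0})\le d-2$ to the other complex factors by letting Galois act on the unitary sets. Example \ref{exam:complex-factorization} in the paper shows exactly this phenomenon: the two Galois-conjugate factors $g_1,g_2$ of an irreducible $f\in R_2$ have $\dim\U(g_1)=1$ while $\U(g_2)=\emptyset$. The step is repairable — apply $\sigma$ to the divisibility relation instead: since $h$ has rational coefficients and the $g_i$ form one Galois orbit, $g_{i_0}\nmid h$ forces $g_i\nmid h$ for every $i$, and then your argument with $\V(g_i)\cap\V(h)$ and the tangent-space lemma applies factor by factor — but as written the inference is unsound. (The paper avoids $\CC$ altogether here: Lemma \ref{lem:codim2} is a purely rational induction showing that coprime $f,h$ already force $\dim\U(f)\cap\U(h)\le d-2$.)

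The backward direction has the more serious gap, and you partly acknowledge it. The claim $\dim_\CC Z_\QQ\le d-2$ is essentially the statement to be proven: it asserts that there are enough \emph{rational} polynomials vanishing on $\U(f)$, i.e.\ that $(f)\subsetneq\nf$. The reflection-principle/complexification sketch can at best bound the dimension of the geometric ($\CC$-)Zariski closure of $\U(f)$; the $\QQ$-Zariski closure may a priori be strictly larger (if $I_\QQ(\U(f))=(f)$ then $Z_\QQ=\V(f)$ has dimension $d-1$ even when the geometric closure has dimension $d-2$), and the appeal to ``Galois invariance'' fails for the same reason as above: $\mathrm{Aut}(\CC/\QQ)$ does not preserve $\U(f)$, and a $\QQ$-definable space of vanishing conditions need not be spanned by rational polynomials. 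Your fallback correctly isolates the hard case (non-reciprocal $f$: take $h=f^*$, as in Lemmas \ref{lem:symmetry} and \ref{lem:fstar}; reciprocal $f$: use derivatives), but you leave that case as an ``obstacle'' rather than prove it, and this is where the content lies. The paper closes it with an explicit rational witness: after replacing $f(\bu)$ by $f(u_1^2,\dots,u_d^2)$ and multiplying by a monomial one may assume $f^*=\pm f$, so $f$ is real (or purely imaginary) on $\sd$; if some $\partial f/\partial u_j$ were nonzero at a point of $\U(f)$, the implicit function theorem would make $\U(f)$ of dimension $d-1$ there, so all partials vanish on $\U(f)$; a nonzero partial has smaller $u_j$-degree, hence is coprime to the irreducible $f$, giving $h=\partial f/\partial u_j\in\nf\smallsetminus(f)$. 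Producing this explicit $h\in\rd$ is what sidesteps the rational-descent problem that your $Z_\QQ$ route never resolves.
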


Symmetry plays a crucial role in the proof.

\begin{definition}
   Let $f(\bu)=\sum f_{\bn}\bu^{\bn}\in\rd$. The \emph{adjoint} of $f$
   is $f^*(\bu)=f(\bu^{-1})=\sum f_{-\bn}\bu^{\bn}$. We say that $f$ is
   \emph{symmetric} if $f^*(\bu)=f(\bu)$, and that $f$ is
   \emph{essentially symmetric} if there is an $\bm\in\zd$ such that
   $f^*(\bu)=\pm \bu^{\bm}f(\bu)$.
\end{definition}

Thus $f$ is symmetric if its array of coefficients is symmetric with
respect to the origin, and is essentially symmetric if this array is
symmetric (or skew-symmetric) with respect to the point
$\frac12\bm\in\RR^d$. 

\begin{lemma}
   \label{lem:symmetry}
   Let $f\in\rd$. Then $f$ is essentially symmetric if and only if
   $f^*\in(f)$. 
\end{lemma}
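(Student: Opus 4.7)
The forward direction is essentially a matter of unwinding definitions: if $f^*=\pm\bu^{\bm}f$, then $f^*=(\pm\bu^{\bm})\cdot f\in(f)$, since $\pm\bu^{\bm}\in\rd$. So the content of the lemma is the reverse implication.

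The plan for the reverse direction is to exploit the involutive nature of the adjoint together with the fact that $\rd=\ZZ[u_1^{\pm1},\dots,u_d^{\pm1}]$ is an integral domain whose group of units is exactly $\{\pm\bu^{\bm}:\bm\in\zd\}$. Assume $f\ne0$ (the $f=0$ case is trivial, as then $f^*=0$ and any $\bm$ makes $f$ essentially symmetric). Suppose $f^*=gf$ for some $g\in\rd$. First I would apply the adjoint operation to both sides, using $(fg)^*=f^*g^*$ and $(f^*)^*=f$, to obtain $f=g^*f^*$. Substituting the original relation $f^*=gf$ into this yields $f=g^*gf$.

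Next, since $\rd$ is an integral domain and $f\ne0$, I would cancel $f$ to conclude $g^*g=1$. This forces $g$ to be a unit of $\rd$, and hence $g=\pm\bu^{\bm}$ for some $\bm\in\zd$. Plugging back into $f^*=gf$ gives $f^*=\pm\bu^{\bm}f$, which is precisely essential symmetry.

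There is no real obstacle: the only ingredients are (i) $(fg)^*=f^*g^*$ and $(f^*)^*=f$, both immediate from the definition of the adjoint, (ii) the fact that $\rd$ is an integral domain, and (iii) the explicit determination of the units of $\rd$, which the excerpt records just above the definition of atorality. The main thing to be careful about is the trivial case $f=0$, which must be separated out before cancellation.
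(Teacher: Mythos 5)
Your proof is correct and follows essentially the same route as the paper: apply the adjoint to $f^*=gf$ to get $f=g^*gf$, cancel $f$ in the integral domain $\rd$ to obtain $g^*g=1$, and conclude $g=\pm\bu^{\bm}$ from the description of the units of $\rd$. Your handling of the trivial case $f=0$ is a minor addition the paper simply sidesteps by assuming $f\ne0$.
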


\begin{proof}
   If $f$ is essentially symmetric, then obviously
   $f^*(\bu)=\pm\bu^{\bm}f(\bu)\in(f)$. For the opposite implication we
   may assume that $f\ne0$. Suppose that $f^*=hf$ for some
   $h\in\rd$. Then $f=h^*f^*=h^*hf$, so that $(h^*h-1)f=0$. Since
   $f\ne0$, we obtain that $h^*h=1$. But the only units in $\rd$ are
   $\pm \bu^{\bm}$.
\end{proof}

\begin{lemma}
   \label{lem:fstar}
   Let $f\in\rd$. Then $\U(f^*)=\U(f)$.
\end{lemma}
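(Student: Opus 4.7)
The plan is to exploit the fact that $f$ has integer (hence real) coefficients together with the identity $\overline{s} = s^{-1}$ valid for $s \in \SS$. This reduces the lemma to the remark that evaluation of $f^*$ on $\SS^d$ is just complex conjugation of evaluation of $f$.

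More precisely, I would first note that for any $\bs = (s_1, \dots, s_d) \in \SS^d$ each coordinate satisfies $s_j^{-1} = \overline{s_j}$, so $\bs^{-\bn} = \overline{\bs^{\bn}}$ for every $\bn \in \zd$. Writing $f = \sum_{\bn} f_{\bn} \bu^{\bn}$ with $f_{\bn} \in \ZZ$, I compute
\begin{displaymath}
   f^*(\bs) = f(\bs^{-1}) = \sum_{\bn} f_{\bn} \bs^{-\bn}
   = \sum_{\bn} \overline{f_{\bn}} \,\overline{\bs^{\bn}}
   = \overline{f(\bs)}.
\end{displaymath}
Hence $f^*(\bs) = 0$ if and only if $f(\bs) = 0$, which gives $\U(f^*) = \U(f)$ directly from the definition $\U(f) = \{\bs \in \SS^d : f(\bs) = 0\}$.

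There is essentially no obstacle here; the only thing to be careful about is that $f$ is a Laurent polynomial, but this causes no trouble since the same reindexing $\bn \mapsto -\bn$ on the support works for Laurent exponents, and the integrality (in fact, reality) of the coefficients is what makes conjugation pass through the sum. No separate argument is required.
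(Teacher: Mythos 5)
Your argument is correct and is exactly the paper's proof: since $f$ has real (integer) coefficients and $\bs^{-\bn}=\overline{\bs^{\bn}}$ on $\SS^d$, one has $f^*(\bs)=\overline{f(\bs)}$, so the two unitary varieties coincide. Nothing further is needed.
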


\begin{proof}
   Since $f$ has real coefficients, for every $\bs\in\sd$ we have that
   $f^*(\bs)=\overline{f(\bs)}$. 
\end{proof}

\begin{lemma}
   \label{lem:codim2}
   Let $r\ge2$ and suppose that $g_1,\dots,g_r\in\QQ[u_1,\dots,u_d]$
   have no common factor. Let $\fg$ be the ideal in $\QQ[u_1,\dots,u_d]$
   they generate. Then $\dim\U(\fg)\le d-2$.
\end{lemma}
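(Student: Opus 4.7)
The plan is to split the argument into two stages: first bound the \emph{complex} dimension of $\V(\fg)$ as a complex algebraic set in $\CC^d$, and then bound the real dimension of $\U(\fg)=\V(\fg)\cap\sd$ by the complex dimension of $\V(\fg)$ via a totally-real tangent space computation.

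For the first stage, I would first observe that coprimality of $g_1,\dots,g_r$ in $\QQ[u_1,\dots,u_d]$ already implies coprimality in $\CC[u_1,\dots,u_d]$: if $q\in\CC[u_1,\dots,u_d]$ were an irreducible common divisor, then the contraction of the height-one prime $(q)$ to the UFD $\QQ[u_1,\dots,u_d]$ would be a nonzero height-one prime, hence principal with generator $p\in\QQ[u_1,\dots,u_d]$, and $p$ would divide every $g_i$. Now decompose $\V(g_1)$ over $\CC$ into its irreducible hypersurface components $\V(q_j)$, one for each irreducible factor $q_j$ of $g_1$. For each $j$, some $g_i$ is not divisible by $q_j$ and therefore does not vanish identically on the irreducible variety $\V(q_j)$, so $\V(\fg)\cap\V(q_j)\subseteq\V(q_j)\cap\V(g_i)$ has complex dimension at most $d-2$. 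Taking the union over $j$ gives $\dim_\CC\V(\fg)\le d-2$.

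For the second stage, the key observation is that $\sd\subset\CC^d$ is a totally real real-analytic submanifold of real dimension $d$: at every $\bs\in\sd$ one has $T_\bs\sd\cap i\,T_\bs\sd=\{0\}$, as is immediate from the parametrization $z_j=e^{i\theta_j}$. A short linear-algebra argument then shows that whenever $A\subset\CC^d$ is a complex subspace of complex dimension $k$ and $B\subset\CC^d$ is a real subspace of real dimension $d$ with $B\cap iB=\{0\}$, one has $\dim_\RR(A\cap B)\le k$: both $A\cap B$ and $i(A\cap B)$ lie in $A$, their intersection lies in $B\cap iB=\{0\}$, so their direct sum sits inside $A$ and has real dimension $2\dim_\RR(A\cap B)\le 2k$. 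Applied at smooth points of $\V(\fg)$ lying on $\sd$, this bounds the real tangent space of $\U(\fg)$ pointwise by $\dim_\CC\V(\fg)\le d-2$.

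The main obstacle I anticipate is that this tangent argument is only clean at smooth points of $\V(\fg)$. I would circumvent it by stratifying $\V(\fg)$ as a finite disjoint union of connected complex submanifolds, each of complex dimension at most $d-2$. Each stratum meets $\sd$ in a real semialgebraic set whose dimension is bounded by $d-2$ by the smooth tangent computation, and the real dimension of $\U(\fg)$, computed via the cell decomposition of real semialgebraic sets recalled in the introduction, is the maximum of these strata dimensions.
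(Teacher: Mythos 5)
Your proposal is correct, but it takes a genuinely different route from the paper. The paper argues by induction on $d$: viewing $g_1,\dots,g_r$ as polynomials in $u_d$ over $K=\QQ(u_1,\dots,u_{d-1})$, Gauss's Lemma gives a relation $\sum_j A_jg_j=B$ with $0\ne B\in\QQ[u_1,\dots,u_{d-1}]$, and the dimension bound is then obtained fiberwise over the projection $\sd\to\SS^{d-1}$, splitting $\U(B)$ into the locus where some $u_d$-coefficient $g_{jk}$ is nonzero (there the fibers meet $\U(\fg)$ in finite sets) and the locus where all $g_{jk}$ vanish (there the inductive hypothesis applied to the $g_{jk}$ gives dimension at most $d-3$); everything stays inside elementary algebra over $\QQ$ plus real semialgebraic dimension theory. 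You instead pass to $\CC$: coprimality over $\QQ$ persists over $\CC$ (your contraction argument is fine once one knows the contracted prime has height one, e.g.\ by going-down for the flat extension $\QQ[u_1,\dots,u_d]\subset\CC[u_1,\dots,u_d]$; equivalently, this is the standard fact that gcd's are unchanged under extension of the coefficient field), then the Nullstellensatz and irreducibility of each hypersurface component of $\V(g_1)$ give $\dim_{\CC}\V(\fg)\le d-2$, and the totally real position of $\sd$ converts the complex bound into a real one. Your linear-algebra lemma is correct, and the stratification into complex submanifolds handles singular points; the one detail to make explicit is the passage from the pointwise tangent-space bound to the semialgebraic dimension of a stratum's intersection with $\sd$: take a top-dimensional $C^1$ cell of that intersection and note that at any of its points $x$ the cell's tangent space is contained in $T_xM\cap T_x\sd$, which your lemma bounds by $\dim_{\CC}M\le d-2$. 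What your route buys is the cleaner general statement that $\dim_{\RR}(W\cap\sd)\le\dim_{\CC}W$ for every complex algebraic set $W\subset\CC^d$, which is precisely the complex-codimension-two viewpoint of Agler--McCarthy--Stankus cited in Section~\ref{sec:homoclinic-atoral}; what the paper's induction buys is a self-contained, entirely rational and elementary argument that needs no complex dimension theory, Nullstellensatz, or stratification.
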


\begin{proof}
   We use induction on $d$. First suppose that $d=1$, and that
   $g_1,\dots,g_r\in\QQ[u_1]$ have no common factor. Then their greatest
   common divisor is $1$, so there are $A_j(u_1)\in\QQ[u_1]$ such that
   $\sum_{j=1}^r A_j(u_1)g_j(u_1)=1$. Hence
   $\U(\fg)=\bigcap_{j=1}^r\U(g_j)=\emptyset$, and
   $\dim\emptyset=-\infty\le -1$.

   To complete the induction argument, assume that the result is true for
   $d-1$, and let $g_1,\dots,g_r\in\QQ[u_1,\dots,u_d]$ have no common
   factor. Put $K=\QQ(u_1,\dots,u_{d-1})$ and consider
   \begin{displaymath}
      g_j(u_1,\dots,u_d)=\sum_k g_{jk}(u^{}_1,\dots,u^{}_{d-1})u_d^k
      \in K[u_d].
   \end{displaymath}
   By Gauss's Lemma, the greatest common divisor of the $g_j$ as
   elements in $K[u_d]$ is~ 1, so there are
   \begin{displaymath}
      a_j(u_1,\dots,u_d)=\sum_k a_{jk}(u^{}_1,\dots,u^{}_{d-1})u_d^k
   \end{displaymath}
   with $a_{jk}(u_1,\dots,u_{d-1})\in\QQ(u_1,\dots,u_{d-1})$ such that
   \begin{displaymath}
      \sum_{j=1}^r a_j(u_1,\dots,u_d)g_j(u_1,\dots,u_d)=1.
   \end{displaymath}
   Clearing denominators, we obtain
   \begin{displaymath}
      A_j(u_1,\dots,u_d)\in\QQ[u_1,\dots,u_d] \text{\quad and\quad}
      0\ne B(u_1,\dots,u_{d-1})\in\QQ[u_1,\dots,u_{d-1}]
   \end{displaymath}
   such that
   \begin{equation}
      \label{eq:gcdd}
      \sum_{j=1}^r A_j(u_1,\dots,u_d)g_j(u_1,\dots,u_d)=B(u_1,\dots,u_{d-1}).
   \end{equation}
   Define $\pi\colon\SS^d\to\SS^{d-1}$ by
   $\pi(s_1,\dots,s_d)=(s_1,\dots,s_{d-1})$. Since $B\ne0$, it follows
   that $\dim\U(B)\le d-2$ as a subset of $\SS^{d-1}$.
   Let $Y=\{\bs\in\U(B):g_{jk}(\bs)\ne0\text{ for some $j,k$}\}$ and
   $Z=\{\bs\in\U(B):g_{jk}(\bs)=0\text{ for all $j,k$}\}$. If
   $\bs\in Y$ then some $g_j(\bs,u_d)\ne0$, so that $\pi^{-1}(\bs)\cap
   Y$ is finite. Hence $\dim(\pi^{-1}(Y)\cap\U(\fg))\le\dim \U(B)\le
   d-2$. Observe that the $g_{jk}(u_1,\dots,u_{d-1})$ cannot have a common
   factor in $\QQ[u_1,\dots,u_{d-1}]$, since this would contradict our
   assumption on the $g_j$'s. Hence by the inductive hypothesis applied
   to the $g_{jk}$'s, we find that $Z$, being the unitary variety of the
   ideal generated by the $g_{jk}$, has dimension at most $d-3$. Hence
   $\dim \pi^{-1}(Z)\cap\U(\fg)\le d-2$. Thus $\U(\fg)$ is the union of
   the two semialgebraic sets $\pi^{-1}(Y)\cap\U(\fg)$ and
   $\pi^{-1}(Z)\cap\U(\fg)$, each of which has dimension $\le d-2$,
   completing the induction step, and the proof.
\end{proof}

\begin{proof}[Proof of Proposition \ref{prop:atoral}]
   First suppose that $f\in\rd$ is irreducible and atoral. Thus there is
   a $g\in\rd\smallsetminus (f)$ with $\U(g)\supseteq\U(f)$. Now $f$ and
   $g$ cannot have a common factor since $f$ does not divide $g$, so
   that by Lemma \ref{lem:codim2} applied to $g_1=f$ and $g_2=g$, we
   obtain that $\U(f)=\U(f)\cap\U(g)$ has dimension $\le d-2$.

   For the reverse implication, suppose that $f\in\rd$ is irreducible
   and that $\dim\U(f)\le d-2$.

   If $f$ is not essentially symmetric, then $g=f^*\notin(f)$ by Lemma
   \ref{lem:symmetry}, and $\U(g)=\U(f)$ by Lemma \ref{lem:fstar}, hence
   $f$ is atoral.

   Suppose now that $f$ is essentially symmetric, and that
   $f^*(\bu)=\bu^{\bm}f(\bu)$. Observe that if
   $f_2(u^{}_1,\dots,u^{}_d)=f(u_1^2,\dots,u_d^2)$, then $\U(f_2)$ is the
   union of $2^d$ smaller copies of $\U(f)$ and that
   $f_2^*(\bu)=\bu^{2\bm}f^{}_2(\bu)$. Hence by replacing $f$ with $f_2$
   we may assume that $\bm=2\bn\in2\zd$. Furthermore, replacing $f(\bu)$
   with $\bu^{\bn}f(\bu)$ (which preserves unitary varieties), we may
   assume that $f^*=f$ is exactly symmetric.

   We next show that the partial derivatives of $f$ must also vanish on
   $\U(f)$. Symmetry of $f$ means that $f$ is real-valued on $\sd$. Let
   $e\colon\TT^d\to\sd$ be the isomorphism $e(t_1,\dots,t_d)=(e^{2\pi i
   t_1},\dots,e^{2\pi i t_d})$. Then $f\circ e\colon\TT^d\to\sd$
   vanishes on $e^{-1}(\U(f))$. Let $\bs\in\U(f)$ and suppose that there
   is a $j$ for which $\partial f/\partial{u_j}(\bs)\ne0$. By the chain
   rule, $\partial(f\circ e)/\partial{t_j}(e^{-1}(\bs))\ne0$, hence the
   gradient of $f\circ e$ does not vanish at $e^{-1}(\bs)$. The Implicit
   Function Theorem shows that near $e^{-1}(\bs)$ the vanishing set of
   $f\circ e$ is $(d-1)$-dimensional, and hence $\U(f)$ is
   $(d-1)$-dimensional, contradicting our assumption.

   Hence all partials $\partial f/\partial u_j$ vanish on $\U(f)$. The
   case of constant $f$ is trivial, so we may assume that at least one
   $\partial f/\partial u_j\ne0$. But the $u_j$ degree of this partial
   is strictly less than the $u_j$-degree of $f$, and so cannot have a
   factor in common with $f$ by irreducibility of $f$. Hence using
   $g=\partial f/\partial u_j$ shows that $f$ is atoral.

   Finally, if $f$ is essentially symmetric with
   $f^*(\bu)=-\bu^{\bm}f(\bu)$, our previous simplifications show that
   we may assume that $f^*=-f$. But then $f$ is purely imaginary on
   $\sd$, so we can apply the preceding argument to $f/i$ and again
   obtain that $f$ is atoral.
\end{proof}

\begin{remark}
   Since $\U(f_1\cdots f_r)=\U(f_1)\cup\dots\cup\U(f_r)$, and since the
   dimension of the union of a finite number of semialgebraic sets
   equals the largest of the dimensions of those sets, it follows that
   Proposition \ref{prop:atoral} remains true for all $f\in\rd$.
\end{remark}

\section{Summable homoclinic points}
\label{sec:summable}

Let $\al$ be an algebraic $\zd$-action on a compact abelian group
$X$. We recall from \cite{LS} that a point $x\in X$ is
\emph{homoclinic for $\al$} if $\al^{\bn}(x)\to0$ as
$\|\bn\|\to\infty$. The set of all homoclinic points in $X$ is a
subgroup that we denote by $\Da(X)$. If $\al$ is expansive then the
homoclinic group $\Da(X)$ is countable, and additionally
$\al^{\bn}(x)\to0$ exponentially fast as $\|\bn\|\to\infty$ for every
$x\in\Da(X)$. If $\al$ is nonexpansive then $\Da(X)$ may be uncountable,
countable, or trivial, and $\al$-homoclinic points may decay very slowly
(see \cite{LS} for details, examples, and connections with entropy).

For the proof of Theorem \ref{th:main} we need homoclinic points which
decay sufficiently rapidly. To describe our requirements more precisely,
we write $\dT t \dT$ for the distance from a point $t\in\TT$ to $0$.
A point $x\in X_f$ is called a \emph{summable homoclinic point
for $\af$} if $\sum_{\bn\in\zd}\dT x_{\bn} \dT<\infty$. Denote the
subgroup of summable homoclinic points by $\Dafs(X_f)$, which is
obviously a subgroup of $\Daf(X_f)$.

If $\af$ is expansive then $\U(f)=\emptyset$, and so $1/f^*$ is analytic
on $\sd$. As shown in \cite{LS}, the Fourier coefficients of $1/f^*$
provide a nonzero homoclinic point for $\af$ that decays exponentially
fast, hence is summable. When $\U(f)\ne\emptyset$, the same approach
will work provided there is a $g\in\rd\smallsetminus(f)$ for which
$g/f^*$ is smooth enough to have absolutely convergent Fourier
series. The existence of such a $g$ exactly depends on whether or not
$f$ is atoral.

We begin with some Fourier machinery. For $a=(a_{\bn})\in\lozdc$ define
its Fourier transform $\ahat\colon\sd\to\CC$ by
$\ahat(\bs)=\sum_{\bn\in\zd}a_{\bn}\bs^{\bn}$, where
$\bs^{\bn}=s_1^{n_1}\cdots s_d^{n_d}$. If $\phi\colon\sd\to\CC$ is
integrable with respect to Haar measure $\lam$ on $\sd$, we define its
Fourier coefficients by $\widehat{\phi}_{\bn}=\int
\phi(\bs)\bs^{-\bn}d\lam(\bs)$. A polynomial $g=\sum
g_{\bn}\bu^{\bn}\in\rd$ can be considered as an element in $\lozdc$, and
as such $\ghat$ corresponds to the polynomial function on $\sd$.

We return to our given nonzero irreducible polynomial
$f\in\rd\subset\lozdc$. Define ideals $\nf$ and $\mf$ of $\rd$ by
\begin{displaymath}
    \nf=\{h\in\rd:h|_{\U(f)}\equiv0\},
\end{displaymath}
\begin{displaymath}
    \mf=\{h\in\rd:\hhat/\fhat \text{ has absolutely convergent Fourier
   series}\}.
\end{displaymath}
Clearly $(f)\subseteq\mf\subseteq\nf$. By definition, $f$ is toral if
and only if these ideals coincide.

\begin{lemma}
   \label{lem:radical}
   Let $\sqrt{\mathstrut{}\mf}$ denote the radical ideal of $\mf$. Then
   $\sqrt{\mathstrut\mf}=\nf$. 
\end{lemma}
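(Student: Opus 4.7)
The plan is to prove the two inclusions $\sqrt{\mf}\subseteq\nf$ and $\nf\subseteq\sqrt{\mf}$ separately; the first is a continuity argument, while the second requires Lojasiewicz-type vanishing estimates together with a regularity upgrade.

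For $\sqrt{\mf}\subseteq\nf$, suppose $h^k\in\mf$ for some positive integer $k$. By the definition of $\mf$, there is a function $F\colon\sd\to\CC$ with absolutely convergent Fourier series such that $\fhat\cdot F=\widehat{h^k}$ as Fourier series. Since $F$ is therefore continuous on $\sd$, and since $\fhat$ and $\widehat{h^k}$ agree on $\sd$ with the polynomial functions $f$ and $h^k$, we obtain $f(\bs)F(\bs)=h^k(\bs)$ for every $\bs\in\sd$. At any $\bs\in\U(f)$ this forces $h^k(\bs)=0$ and hence $h(\bs)=0$, so $h\in\nf$.

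For the harder direction $\nf\subseteq\sqrt{\mf}$, let $h\in\nf$, so that $h$ vanishes on $\U(f)=\{\bs\in\sd:f(\bs)=0\}$. The starting point is the Lojasiewicz inequality on the compact real-analytic manifold $\sd$, applied to the real-analytic functions $|h|^2$ and $|f|^2$: since $\{|f|^2=0\}\subseteq\{|h|^2=0\}$ on $\sd$, there exist an integer $N\ge1$ and a constant $C>0$ with
\begin{equation*}
   |h(\bs)|^N\le C\,|f(\bs)|\quad\text{for all }\bs\in\sd.
\end{equation*}
This makes $h^N/f$ bounded on $\sd\smallsetminus\U(f)$, but boundedness alone does not imply absolutely summable Fourier coefficients.

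The step I expect to be the main obstacle is the upgrade from a pointwise bound to $C^s$-regularity with $s>d/2$, which is what is needed to force absolutely convergent Fourier series on the torus. I would argue that for every positive integer $s$ there is an $N'=N'(s)$ for which $h^{N'}/f\in C^s(\sd)$. One approach is to differentiate $h^{N'}/f$ and express each $\partial^\alpha(h^{N'}/f)$ as a rational combination of $h$, $f$, and their derivatives with denominator a power of $f$; each such denominator is then absorbed by a suitable power of $h$ via the Lojasiewicz bound, and the argument proceeds by induction on $|\alpha|$. A cleaner alternative is to invoke the Lojasiewicz division theorem (in the form due to Malgrange and Tougeron), which for real-analytic $f$ on the compact analytic manifold $\sd$ yields, for any prescribed $s$, an integer $N'$ and a function $g\in C^s(\sd)$ with $h^{N'}=fg$. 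Choosing $s>d/2$ makes the Fourier coefficients of $g=h^{N'}/f$ absolutely summable by the standard Sobolev embedding on $\sd$, so $h^{N'}\in\mf$ and therefore $h\in\sqrt{\mf}$.
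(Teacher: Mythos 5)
Your proof is correct and follows essentially the same route as the paper: the easy inclusion $\sqrt{\mathstrut\mf}\subseteq\nf$ by continuity, and the reverse inclusion via a {\L}ojasiewicz inequality on $\sd$ (you state it as $|h|^N\le C|f|$, the paper as a lower bound for $|f|^2$ in terms of $\dist(\cdot,\U(f))$ combined with the Lipschitz vanishing of $h$ --- equivalent variants), followed by the quotient-rule induction to get $h^{N'}/f\in C^s$ with $s>d/2$ and hence absolutely convergent Fourier series. Your alternative appeal to the Malgrange--Tougeron division theorem is a legitimate shortcut for the regularity upgrade but is not needed; the argument as in the paper already suffices.
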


\begin{proof}
   If $g\in\sqrt{\mathstrut\mf}$, then $g$ (or, more precisely, the
   Fourier transform $\ghat\colon\sd\to\CC$) must vanish on $\U(f)$.
   Hence $\sqrt{\mathstrut\mf}\subseteq\nf$.

   For the reverse inclusion we use the {\L}ojasiewicz inequality from
   real algebraic geometry. Recall that we can consider $\CC^d=\RR^{2d}$
   with coordinates $x_1,y_1,\dots,x_d,y_d$. We may assume that
   $f\in\rd$ is a polynomial, and expand $f=f_1+if_2$ as in
   \eqref{eqn:real-expansion}, where $f_1,f_2\in\RR[x_1,y_1,\dots,
   x_d,y_d]$. Let $F=f_1^2+f_2^2$. The zero set of $F$ in $\sd$ is just
   $\U(f)$. The classical \L ojasiewicz inequality applied to $F$
   implies there are constants $C,\beta$ such that
   \begin{displaymath}
      |\fhat(\bs)|^2=|F(\bs)|\ge C\dist(\bs,\U(f))^{\beta},
   \end{displaymath}
   where $\dist$ denotes the usual distance between points of $\sd$.
   Since $\ghat$ vanishes on $\U(f)$ and is Lipschitz, it follow that
   there is a $k\ge1$ such that the function $G_k\colon\sd\to\CC$
   defined by
   \begin{displaymath}
      G_k(\bs)=
      \begin{cases}
         \displaystyle\frac{\ghat(\bs)^k}{\fhat(\bs)}
                            &\text{if $\bs\notin\U(f)$},\\
         0                  &\text{if $\bs\in\U(f)$}
      \end{cases}
   \end{displaymath}
   is continuous on $\sd$. The standard formula for derivatives of
   quotients then shows that we can arrange for $G_K$ to have as many
   partial derivatives as we need by making $K$ large enough. Since
   sufficiently smooth functions have absolutely convergent Fourier
   series, $g^K\in\mf$ for large enough $K$, and so
   $g\in\sqrt{\mathstrut\mf}$.
\end{proof}

We now state the main result of this section.

\begin{theorem}
   \label{thm:summable-points}
   Let $f\in\rd$ be a nonzero irreducible Laurent polynomial. Let $\af$
   be the algebraic $\zd$-action on $X_f$ defined above, and
   $\Dafs(X_f)$ be the subgroup of summable homoclinic points of
   $\af$. Then the following are equivalent:
   \begin{enumerate}
     \item[(1)]  $\Dafs(X_f)\ne\{0\}$;
     \item[(2)]  $\Dafs(X_f)$ is dense in $X_f$;
     \item[(3)]  $f$ is atoral \textup{(}or, equivalently,
      $(f)\subsetneq\nf$\textup{)};
     \item[(4)]  $\dim\U(f)\le d-2$.
   \end{enumerate}
\end{theorem}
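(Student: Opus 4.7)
The equivalence of (3) and (4) is Proposition \ref{prop:atoral}, and (2) $\Rightarrow$ (1) is immediate: $f$ is irreducible, hence not a unit, so $X_f$ is a nontrivial compact group and a dense subgroup of it cannot be $\{0\}$. The plan is to close the cycle by proving (1) $\Rightarrow$ (3) and then (3) $\Rightarrow$ (2). Both arguments rely on the same Fourier dictionary between summable sequences on $\zd$ and continuous functions on $\sd$, together with the fact that $(f)$ is a prime ideal in the unique factorization domain $\rd$.

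For (1) $\Rightarrow$ (3), I would start with a nonzero $x \in \Dafs(X_f)$, lift each coordinate to its nearest representative in $(-\tfrac12,\tfrac12]$ to obtain $\tilde{x} \in \lozdr$, and set $g = f(\sig)\tilde{x}$. Since $\tilde{x}$ is summable and $f$ has finite support, $g \in \lozdr$; because $x \in X_f$, the sequence $g$ is integer valued, and an $\ell^1$ integer sequence must have finite support, so $g \in \rd$. Fourier transforming gives $\ghat/\fhat = \widehat{\tilde{x}}$, which has absolutely convergent Fourier series, so $g \in \mf$. If $g$ were in $(f)$, say $g = fh$ with $h \in \rd$, Fourier uniqueness would force $\tilde{x} = h \in \rd$ and hence $x = 0$. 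Therefore $\mf \supsetneq (f)$, and combining this with Lemma \ref{lem:radical} and the primality of $(f)$ gives $\nf \supsetneq (f)$, which is (3).

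For (3) $\Rightarrow$ (2), by Lemma \ref{lem:radical} and primality of $(f)$ the strict inclusion $\nf \supsetneq (f)$ forces $\mf \supsetneq (f)$, so I can choose $h \in \mf \setminus (f)$. Let $w$ be the sequence of Fourier coefficients of $\hhat/\fhat$; then $w \in \lozdc$ by definition of $\mf$, and in fact $w \in \lozdr$ because the symmetry $\hhat(\bs^{-1}) = \overline{\hhat(\bs)}$ valid on $\sd$ for any real-coefficient Laurent polynomial is inherited by the quotient $\hhat/\fhat$. Reducing $w$ mod $1$ coordinate-wise gives $x \in \tzd$; the identity $f(\sig)w = h \in \rd$ implies $f(\sig)x = 0$, so $x \in X_f$, and summability of $x$ is inherited from that of $w$. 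Nontriviality of $x$ follows once more from $\ell^1$-integer rigidity: if $x = 0$ then $w$ is summable and integer valued, hence in $\rd$, forcing $h = f(\sig)w \in (f)$, a contradiction.

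The step I expect to be the main obstacle is upgrading this single nonzero summable homoclinic point to density of $\Dafs(X_f)$. The plan is to compute the $\rd$-annihilator of $x$ under the $\af$-action: for $g \in \rd$, the equation $g(\sig)x = 0$ in $X_f$ is equivalent to $g(\sig)w$ being integer valued, and hence (again by $\ell^1$-integer rigidity) to $g(\sig)w \in \rd$. Fourier transforming, the latter is equivalent to the continuous function $\ghat \cdot \widehat{w}$ being a polynomial on $\sd$; comparing with $\ghat \hhat/\fhat$ on the dense open set $\sd \setminus \U(f)$ shows that this happens iff $f \mid gh$ in $\rd$, and primality of $(f)$ together with $h \notin (f)$ then force $g \in (f)$. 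Hence the $\rd$-annihilator of the $\af$-orbit of $x$ equals $(f)$, the annihilator of $X_f$ itself, so by Pontryagin duality the closed $\af$-invariant subgroup generated by $x$ is all of $X_f$; a fortiori $\Dafs(X_f)$ is dense.
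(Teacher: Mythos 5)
Your argument is correct, and for the core implications it follows the paper's own route: Proposition \ref{prop:atoral} for (3)$\Leftrightarrow$(4), linearization of $\af$, the rigidity fact that an integer-valued $\ell^1$ sequence is a Laurent polynomial, Lemma \ref{lem:radical} together with primality of $(f)$ to pass from $\nf\supsetneq(f)$ to $\mf\supsetneq(f)$, and the continuity/Fourier-uniqueness argument to rule out triviality. One bookkeeping point: with the paper's shift convention, $f(\sigt)w$ is the convolution $f^**w$, so its Fourier transform is $\fhs\cdot\widehat{w}$ rather than $\fhat\cdot\widehat{w}$; consequently the sequence you reduce mod $1$ in (3)$\Rightarrow$(2) should be the reflected point $w^*$ (whose transform is $\hhs/\fhs$), exactly as the paper passes from $v$ to $v^*$, and the same stars must be sprinkled through (1)$\Rightarrow$(3). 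This is a one-line repair and does not affect the structure; your symmetry observation that the coefficients of $\hhat/\fhat$ are automatically real is correct and is a tidy substitute for the paper's ``real and imaginary parts'' step. Also, in (1)$\Rightarrow$(3) the appeal to Lemma \ref{lem:radical} is superfluous, since $\mf\subseteq\nf$ already turns $\mf\supsetneq(f)$ into statement (3).

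The genuinely different ingredient is your density step. The paper gets (2) by noting that $h$ is coprime to $f$, so $h(\af)\colon X_f\to X_f$ is surjective, and then citing the proof of Lemma~6.3 of \cite{LSV} to conclude that the shifts of $x$ generate a dense subgroup of $h(\af)(X_f)=X_f$. You instead compute the annihilator ideal of the orbit of $x$: $g(\sig)x=0$ forces $g(\sigt)w^*$ to be an integer-valued $\ell^1$ sequence, hence a polynomial $p$, and comparing Fourier transforms on the dense set $\sd\setminus\U(f)$ gives $gh=pf$ up to adjoints, so $g\in(f)$ by primality of $(f)$ and $h\notin(f)$; Pontryagin duality then makes the closed invariant subgroup generated by $x$ equal to $X_f$, and this subgroup is the closure of a subgroup of $\Dafs(X_f)$. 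This is self-contained and avoids the external citation, at the small cost of justifying that $\sd\setminus\U(f)$ is dense (clear, since $\fhat$ is real-analytic and not identically zero on the connected set $\sd$). Finally, you do not treat $d=1$ separately as the paper does; that is harmless, since Proposition \ref{prop:atoral} and your arguments apply verbatim in that case.
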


\begin{proof}
   The case $d=1$ is easily handled using the observations that
   $f\in R_1$ is atoral if and only if $\U(f)=\emptyset$, and this occurs
   if and only if $\af$ has nonzero homoclinic points which decay
   exponentially fast, in which case all homoclinic points are summable
   and the homoclinic group is dense (see \cite{LS} for details).

   We may therefore assume that $d\ge2$. The equivalence of (3) and (4)
   is contained in Proposition \ref{prop:atoral}.
   Clearly (2) implies (1) since $f$ is not a unit and so $X_f\ne\{0\}$.

   To prove the remaining implications, we first linearize the action
   $\af$. Consider the surjective map $\eta\colon \lizdr\to\tzd$ given
   by $\eta(w)_{\bn}=w_{\bn}\text{(mod 1)}$.We define the covering
   shift-action $\sigt$ of $\zd$ on $\lizdr$ by
   $(\sigt^{\bm}w)_{\bn}=w_{\bm+\bn}$. Set
   \begin{displaymath}
      f(\sigt)=\sum_{\bn} f_{\bn}\sigt^{\bn}\colon\lizdr\to\lizdr,
   \end{displaymath}
   and put
   \begin{align*}
      W_f\coloneqq \eta^{-1}(X_f) &=\{w\in\lizdr:\eta(w)\in X_f\}\\
      &=\{w\in\lizdr:f(\sigt)(w)\in\lizdz\}.
   \end{align*}
   We view $W_f$ as the \emph{linearization} of $X_f$. Also, viewing
   $f^*$ as an element of $\lozdr$, the point $f(\sigt)(w)$ is the
   convolution product $f^* *w\in\lizdr$.

   Now suppose (3) holds, so that $f$ is atoral. Hence there is a
   $g\in\rd\smallsetminus(f)$ with $\U(g)\supseteq\U(f)=\U(f^*)$. By
   Lemma \ref{lem:radical}, there is a $k$ for which
   $\ghat^k/\fhat\colon\sd\to\CC$ has absolutely convergent Fourier
   transform $v=(v_{\bn})\in\lozdc$. Taking the Fourier transform of
   $\ghat^k/\fhat=\widehat{v}$ shows that $f*v=g^k\in\lizdz$, and so $f^*
   *v^*=(g^*)^k$ has integral coordinates, so that $v^*\in W_f$. Hence
   the images under $\eta$ of the real and imaginary parts of $v^*$ give
   nonzero points in $\Dafs(X_f)$, proving (1).

   Finally, suppose (1) holds, and let $0\ne x\in\Dafs(X_f)$. Lift $x$
   to $v\in W_f\cap\lozdr$ with $\eta(v)=x$. Since $v_{\bn}\to0$ as
   $\|\bn\|\to\infty$, it follows that $f^* *v\in\lizdz$ can have only
   finitely many nonzero coordinates, so represents a Laurent polynomial, say
   $h\in\rd$. Thus $\hhat=\fhs\cdot\widehat{v}$. This shows that
   $\hhat$ vanishes on $\U(f^*)=\U(f)$, and so $h^*\in\nf$. If
   $h^*=g\cdot f$ for some $g\in\rd$, we would have $h=g^*\cdot f^*$, and so
   $\widehat{v}\fhs=\ghs\cdot\fhs$, and a continuity argument
   shows that $v=g^*\in\lizdz$. But then $x=\eta(v)=0$, a
   contradiction. Thus $h^*\in\nf\smallsetminus(f)$, and so $f$ is
   atoral, proving (3). Furthermore $h$ is relatively prime to $f$, so
   that multiplication by $h$ is injective on $\rd/(f)$. It follows that
   $h(\af)\colon X_f\to X_f$ is surjective. The proof of \cite[Lemma~
   6.3]{LSV} shows that the subgroup of $\Dafs(X_f)$ generated by the
   shifts of $x$ is dense in $h(\af)(X_f)=X_f$, proving (2) and
   completing the proof.
\end{proof}

We extract one consequence of the preceding proof as a corollary.

\begin{corollary}
   Let $f\in\rd$ be an irreducible atoral polynomial, and let $\af$ be
   the algebraic $\zd$-action on $X_f$ as described above. Then every
   $x\in\Dafs(X_f)$ has the form $x_h=\eta(v_h)$, where $h^*\in\mf$ and
   $v_h\in\lozdr$ is the Fourier transform of
   $\hhat/\fhs$. Furthermore, $x_h$ is nonzero if and only if
   $h^*\in\mf\smallsetminus(f)$, so that $\Dafs(X_f)$ is isomorphic as
   an abelian group to $\mf/(f)$.
\end{corollary}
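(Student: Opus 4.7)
The plan is to reinterpret the proof of $(1)\Rightarrow(3)$ in Theorem~\ref{thm:summable-points} as an explicit parameterization rather than a mere existence result. Given $k\in\mf$, set $h=k^*$. Because $h$ and $f$ have real coefficients, $\hhat/\fhs$ is the pointwise complex conjugate of $\widehat k/\hat f$ on $\sd$; in particular it has absolutely convergent Fourier series, and Hermitian symmetry forces its Fourier coefficient sequence $v_h$ to lie in $\lozdr$. The assignment $k\mapsto x_{k^*}\coloneqq\eta(v_{k^*})$ is then a homomorphism $\mf\to\tzd$, linear because $v_h$ depends linearly on $h$ and $\eta$ is a homomorphism. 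The task is to show that its image is exactly $\Dafs(X_f)$ and its kernel is $(f)$.

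For surjectivity, given $x\in\Dafs(X_f)$, I would lift $x$ to $v\in\lozdr$ by taking each coordinate in $[-\tfrac12,\tfrac12)$, so that $|v_\bn|=\dT x_\bn\dT$ and $v\in W_f$. Then $h\coloneqq f^**v\in\lizdz$, and since $v_\bn\to 0$ while $f^*$ has finite support, $h$ itself has only finitely many nonzero entries, giving $h\in\rd$. Taking Fourier transforms gives $\widehat v\cdot\fhs=\hhat$ as continuous functions on $\sd$, so $\widehat v$ furnishes the continuous extension of $\hhat/\fhs$ from $\sd\smallsetminus\U(f)$ to all of $\sd$, identifying $v$ with $v_h$. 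Conjugating the statement that $\hhat/\fhs$ has absolutely convergent Fourier series yields $h^*\in\mf$, and $x$ is the image of $k=h^*$ under the map above.

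For the kernel, I would show $x_h=0\iff h^*\in(f)$. If $h^*=gf$ with $g\in\rd$, then $h=g^*f^*$ and $\hhat/\fhs=\widehat{g^*}$ off $\U(f)$, hence everywhere by continuity; thus $v_h=g^*\in\rd\subset\lizdz$ and $x_h=\eta(v_h)=0$. Conversely, if $x_h=0$, then $v_h\in\lozdr\cap\lizdz$ with $v_\bn\to 0$, which forces finite support, so $v_h=p\in\rd$; the identity $\hhat=\widehat p\cdot\fhs$ then becomes the Laurent polynomial identity $h=p\cdot f^*$ in $\rd$ (convolution of finitely supported sequences is polynomial multiplication), so $h^*=p^*\cdot f\in(f)$. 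Together with the already-noted inclusion $(f)\subseteq\mf$, this yields $\Dafs(X_f)\cong\mf/(f)$. The main bookkeeping challenge is the continuous-extension step: one has to justify that $\hhat/\fhs$, a priori defined only on $\sd\smallsetminus\U(f)$, extends uniquely to a continuous function on $\sd$ whose Fourier coefficients form the prescribed real summable sequence $v_h$. This follows from continuity of Fourier transforms of $\ell^1$ sequences together with the fact that $\U(f)$ has empty interior in $\sd$ as a proper real algebraic subset, so agreement off $\U(f)$ forces agreement everywhere.
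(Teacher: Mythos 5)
Your proposal is correct and follows essentially the same route as the paper, which obtains this corollary by extracting the lifting-and-convolution argument from the proof of Theorem \ref{thm:summable-points}: lift a summable homoclinic point to an $\ell^1$ real sequence $v$, observe $f^**v$ is an integer-valued sequence tending to zero and hence a Laurent polynomial $h$, identify $\widehat v$ with the continuous extension of $\hhat/\fhs$, and use the same continuity argument to see that $x_h=0$ exactly when $h^*\in(f)$. Your explicit bookkeeping (Hermitian symmetry giving real coefficients, empty interior of $\U(f)$ for the unique continuous extension) fills in the same details the paper leaves implicit, so there is nothing substantively different to report.
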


In \cite[Thm.~4.2]{LS} it is shown that, for expansive algebraic
$\zd$-actions, the homoclinic group is dense if and only if the action
has completely positive entropy. Using Theorem
\ref{thm:summable-points}, these properties also hold for $\af$ when $f$
is atoral.

\begin{corollary}
   \label{cor:density}
   Let $f\in\rd$ be a \textup{(}possibly reducible\textup{)} atoral
   Laurent polynomial that is not a unit in $\rd$. Then $\af$ has
   completely positive entropy and $\Dafs(X_f)$ is dense in $X_f$.
\end{corollary}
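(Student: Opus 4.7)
The plan is to reduce the corollary to the irreducible case of Theorem \ref{thm:summable-points}, treating completely positive entropy and density of $\Dafs(X_f)$ separately. Write $f=f_1^{a_1}\cdots f_r^{a_r}$ with distinct irreducible atoral factors $f_i$; by Proposition \ref{prop:atoral} each satisfies $\dim\U(f_i)\le d-2$. Since a generalized cyclotomic polynomial $\Phi_n(\bu^{\bk})$ has unitary variety of real dimension $d-1$, no $f_i$ can be generalized cyclotomic, and the multivariable Kronecker theorem of Smyth--Boyd then gives $\m(f_i)>0$ for every $i$. Completely positive entropy of $\af$ will follow from the classical characterization of CPE for principal algebraic $\zd$-actions in terms of positive Mahler measure of all irreducible factors (see \cite{DSAO} and \cite{LSW}).

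For density of $\Dafs(X_f)$ I first reduce to the case when $f$ is a power of a single irreducible atoral polynomial. Each $X_{f_i^{a_i}}\subseteq X_f$ is an $\af$-invariant closed subgroup on which $\af$ restricts to $\alpha_{f_i^{a_i}}$, and since summability is a coordinate condition, every summable homoclinic point of $\alpha_{f_i^{a_i}}$ lies in $\Dafs(X_f)$. The $f_i^{a_i}$ are pairwise coprime in the UFD $\rd$, so $\bigcap_i(f_i^{a_i})=(f)$ and $\sum_i X_{f_i^{a_i}}$ has trivial annihilator in $\rd/(f)$, hence is dense in $X_f$. It therefore suffices to show that $\Delta^1_{\alpha_{f_1^a}}(X_{f_1^a})$ is dense in $X_{f_1^a}$ for $f_1$ irreducible atoral and each $a\ge1$. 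I do this by induction on $a$, the base $a=1$ being Theorem \ref{thm:summable-points}. For the step I use the short exact sequence
\begin{equation*}
   0\longrightarrow X_{f_1^{a-1}}\longrightarrow X_{f_1^a}\xrightarrow{\;f_1^{a-1}(\sigma)\;}X_{f_1}\longrightarrow 0
\end{equation*}
and check that both the intersection $\Delta^1_{\alpha_{f_1^a}}(X_{f_1^a})\cap X_{f_1^{a-1}}$ is dense in $X_{f_1^{a-1}}$ and the image under $f_1^{a-1}(\sigma)$ is dense in $X_{f_1}$; density in $X_{f_1^a}$ then follows by the standard exact-sequence argument. The intersection part is immediate: for $h'\in\mathfrak{m}_{f_1^{a-1}}$, the element $h=f_1h'$ lies in $\mathfrak{m}_{f_1^a}$, and a direct check shows that the resulting $x_h$ lies in $X_{f_1^{a-1}}$ and there coincides with the summable homoclinic point of $\alpha_{f_1^{a-1}}$ attached to $h'$, so the inductive hypothesis applies.

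The main obstacle is showing density of the image in $X_{f_1}$. Since $\mathfrak{m}_{f_1^a}\subseteq\mathfrak{m}_{f_1}$ (because $\hhat/\widehat{f_1}=\widehat{f_1^{a-1}}\cdot\hhat/\widehat{f_1^a}$ is still absolutely convergent), the map $h\mapsto f_1^{a-1}(\sigma)(x_h)$ sends $x_h$ to the homoclinic point $x'_h\in\Delta^1_{\alpha_{f_1}}(X_{f_1})$ attached to $h$. Under the $\rd$-module isomorphism $\Delta^1_{\alpha_{f_1}}(X_{f_1})\cong\mathfrak{m}_{f_1}/(f_1)$ from the Corollary immediately following Theorem \ref{thm:summable-points}, the image subgroup corresponds to the ideal $I=(\mathfrak{m}_{f_1^a}+(f_1))/(f_1)$ of $\rd/(f_1)$. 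Atorality of $f_1$ provides some $g\in\mathfrak{n}_{f_1}\smallsetminus(f_1)$, and by Lemma \ref{lem:radical} applied to $f_1^a$ a sufficiently high power $g^K$ lies in $\mathfrak{m}_{f_1^a}$; primality of $(f_1)$ in the UFD $\rd$ gives $g^K\notin(f_1)$, so $I\ne0$. Since $\rd/(f_1)$ is an integral domain, any $q\in\rd/(f_1)$ Pontryagin-annihilating the image would force $q\cdot\mathfrak{m}_{f_1^a}\subseteq(f_1)$, and hence $q=0$. The image is therefore dense, closing the induction.
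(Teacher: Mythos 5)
Your argument is correct, but for the key prime--power step it takes a genuinely different route from the paper. Both proofs handle completely positive entropy the same way (Boyd's multivariable Kronecker theorem shows an irreducible factor with zero Mahler measure is a generalized cyclotomic, hence toral, and then the associated-prime criterion from \cite{LSW}, \cite{DSAO} applies), and both reduce density to the factors $f_j^{k_j}$ via coprimality of the ideals $(f_j^{k_j})$. Where you diverge is in treating a single prime power: the paper simply observes that if $\widehat{g_j}/\widehat{f_j}$ has absolutely convergent Fourier series then so does its $k_j$-th power $\widehat{g_j}^{\,k_j}/\widehat{f_j^{k_j}}$ (the Wiener algebra is closed under products), which produces a summable homoclinic point for $\alpha_{f_j^{k_j}}$ directly and lets the density argument in the proof of Theorem \ref{thm:summable-points} run for $f_j^{k_j}$ verbatim; you instead induct on the exponent using the exact sequence $0\to X_{f_1^{a-1}}\to X_{f_1^{a}}\to X_{f_1}\to 0$ and a Pontryagin-annihilator argument for the image. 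Your route is longer but avoids having to recheck that the irreducible-case density argument survives for non-irreducible polynomials, and it isolates cleanly where primality of $(f_1)$ is used; the paper's route is shorter because one product observation handles all powers at once. Two small points you should make explicit: Lemma \ref{lem:radical} is stated for irreducible $f$, so when you apply it to $f_1^{a}$ you should either note that its proof never uses irreducibility, or bypass it by the same Wiener-algebra power trick (if $\widehat{g}^{\,k}/\widehat{f_1}$ is absolutely convergent, so is $\widehat{g}^{\,ka}/\widehat{f_1^{a}}$); and the assertion that annihilation of the image forces $q\cdot\mathfrak{m}_{f_1^{a}}\subseteq(f_1)$ deserves the short computation (a character $q$ killing all shifts of $x'_h$ gives $q^{*}*v'_h\in\ell^1(\zd,\ZZ)$, hence $q h^{*}\in(f_1)$), which is exactly the mechanism of \cite[Lemma 6.3]{LSV} that the paper cites at the corresponding point.
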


\begin{proof}
   First recall from \cite[Thm.~1]{Boyd} that an irreducible Laurent
   polynomial $g\in\rd$ has $\h(\alpha_g)=0$ if and only if there is a
   one-variable cyclotomic polynomial $c(t)$ and $\bm,\bn\in\zd$ such
   that $g(\bu)=\pm\bn^{\bm}c(\bu^{\bn})$. For such $g$ clearly
   $\dim\U(g)=d-1$, and so $g$ is toral.

   Let $f=f_1^{k_1}\cdots f_r^{k_r}$ be the factorization of $f$ into
   irreducible Laurent polynomials. The prime ideals associated with
   $(f)$ are just $(f_1),\dots,(f_r)$, and $\h(\alpha_{R_d/(f_j)})>0$
   for $1\le j\le r$ since each $f_j$ is atoral. Hence by
   \cite[Thm.~6.5]{LSW}, it follows that $\af$ has completely positive
   entropy.

   Suppose that $g$ and $h$ are relatively prime Laurent
   polynomials. Then
   \begin{equation}
      \label{eq:sums}
      X_{gh}=(gh)^{\perp}=[(g)\cap (h)]^{\perp}=(g)^{\perp}+(h)^{\perp}=X_g+X_h.
   \end{equation}
   
   Now by definition each $f_j$ is atoral, and so by the proof of
   Theorem~\ref{thm:summable-points} there is a $g_j$ for which
   $g_j/f_j$ has absolutely convergent Fourier series. Hence
   $g_j^{k_j}/f_j^{k_j}$ also has absolutely convergent Fourier series.
   If we denote
   $\alpha_{f_j^{k_j}}$ by $\alpha_j$ and $X_{f_j^{k_j}}$
   by $X_j$, the proof of Theorem ~\ref{thm:summable-points} shows that
   $\Delta_{\al_j}^1(X_j)$ is dense in $X_j$ for $1\le j\le r$. Then
   $\Delta_{\al_1}^1(X_1)+\dots+\Delta_{\al_r}^1(X_r)$ is contained in
   $\Dafs(X_f)$, and is dense in $X_1+\dots+X_r$, which is equal to $X$
   by \eqref{eq:sums}.
\end{proof}

\section{Examples}
\label{sec:examples}

We give here examples to illustrate various phenomena. For clarity,
we use variables $u,v,w$ instead of $u_1,u_2,u_3$.

\begin{example}
   \label{exam:hyperbolic-toral}
   Let $d=1$ and $f(u)=u^2-u-1$. Then $f(u)$ has roots
   $\lam=(1+\sqrt{5})/2$ and $\mu=(1-\sqrt{5})/2$, so that
   $\U(f)=\emptyset$ and $f$ is atoral. As discussed in \cite[Example
   6.7]{LS}, we can compute the Fourier series of
   $1/f^*(u)=1/(u^{-1}-\lam)(u^{-1}-\mu)$ by partial fractions and
   obtain the coefficients
   \begin{displaymath}
      w_n^{\Delta}=
      \begin{cases}
         \displaystyle -\frac{1}{\sqrt5}\,\mu^{n-1} &\text{if
         $n\ge1$},\\
         \displaystyle -\frac{1}{\sqrt5}\,\lam^{n-1} &\text{if
         $n\le0$}.
      \end{cases}
   \end{displaymath}
   Then $x^{\Delta}=\eta(w^{\Delta})\in\Dafs(X_f)$, and
   $\Daf(X_f)=\Dafs(X_f)$ is the group generated by all translates of
   $x^{\Delta}$.

   Geometrically, $\af$ is a hyperbolic automorphism of the
   2-dimensional torus $X_f$, and $\Dafs(X_f)$ is the dense intersection
   of the 1-dimensional stable and unstable eigenlines of $\af$.
\end{example}

\begin{example}
   \label{exam:quasi-hyperbolic}
   Let $d=1$ and $f(u)=u^4-u^3-u^2-u+1$. Then
   $\U(f)=\{\xi,\overline{\xi}\}$, where
   \begin{displaymath}
      \xi=\displaystyle \frac{1-\sqrt{13}}{3}+\frac14\sqrt{2+2\sqrt{13}}
      \,\,i \in\SS,
   \end{displaymath}
   so that $f$ is toral. This can be seen directly since $f$ is the
   minimal polynomial of $\xi$ over $\QQ$, so that any $g\in R_1$ with
   $\U(g)\supseteq\U(f)$ must be in $(f)$. As shown in \cite[Example
   3.4]{LS}, $\Daf(X_f)=\{0\}$, because the 1-dimensional stable and
   unstable eigenlines have only trivial intersection. Here $(f)=\nf=\mf$.
\end{example}

\begin{example}
   \label{exam:harmonic}
   Let $d=2$ and $f(u,v)=2-u-v$. Then $\U(f)=\{(1,1)\}$ and so $f$ is
   atoral. For example, $g(u,v)=u-1$ has $\U(g)\supseteq\U(f)$, but
   $g\notin(f)$.

   As discussed in \cite[Sect.\ 5]{LSV}, $1/f^*$ is integrable on
   $\SS^2$, with Fourier coefficients
   \begin{displaymath}
      w_{(-m,-n)}^{\Delta}=
      \begin{cases}
         \displaystyle \frac{1}{2^{m+n+1}}\binom{m+n}{n} &\text{if
         $m\ge0$ and $n\ge0$,}\\
         0 &\text{otherwise}.
      \end{cases}
   \end{displaymath}
   Hence $x^{\Delta}=\eta(w^{\Delta})\in\Daf(X_f)$, but $x^{\Delta}$ is
   \emph{not} summable since, for example,
   \begin{displaymath}
      x_{(-n,-n)}^{\Delta}=\displaystyle
      \frac{1}{2^{2n+1}}\binom{2n}{n}\approx \frac{1}{2\sqrt{\pi n}}
   \end{displaymath}
   decays too slowly.

   We can attempt to speed up the rate of decay by applying difference
   operators, or equivalently by multiplying $1/f^*$ by powers of $u-1$
   and $v-1$. It turns out that third powers are exactly what is needed,
   so that for example $(u-1)^3/f^*(u,v)$ has absolutely convergent
   Fourier series whose coefficients provide a nonzero point in
   $\Dafs(X_f)$.

   Here $\nf=\{g\in R_2:g(1,1)=\sum_{\bn}g_{\bn}=0\}$ and $\mf$ is the
   ideal in $R_2$ generated by $(u-1)^p(v-1)^q$ with $p+q=3$. The
   ``summable defect'' in this example is
   \begin{displaymath}
      \Daf(X_f)/\Dafs(X_f)\cong \nf/\mf,
   \end{displaymath}
   which is a finitely generated abelian group of rank five generated by
   the cosets $(u-1)^r(v-1)^s+\mf$ with $1\le r+s\le2$.
\end{example}

\begin{example}
   \label{exam:subharmonic}
   Let $d=2$ and $f(u,v)=3-u-u^{-1}-v-v^{-1}$. Then $\U(f)$ is a
   1-dimensional curve in $\SS^2$ which is the image under the
   exponential map $e\colon \TT^2\to\SS^2$ of the closed curve given by
   \begin{displaymath}
      t = \pm \frac1{2\pi}\cos^{-1}\Bigl(\frac32 -\cos 2\pi s\Bigr),
      \text{\quad $-\frac16\le s\le \frac16$}.
   \end{displaymath}
   Thus $f$ is toral, and so $\Dafs(X_f)=\{0\}$. It follows easily from
   \cite{Mann} that all elements of $\U(f)\cap\Om$ must have order 30. A
   straightforward search verifies that
   $\U(f)\cap\Om=\{(\om,1),(\overline{\om},1),(1,\om),(1,\overline{\om})\}$,
   where $\om=e^{2\pi i/6}$. Hence $\dim \FGo(\af)\le4$ for all $\G\in\SF$.

   Ironically, $\Daf(X_f)$
   is so large that it is uncountable. Let $\mu$ be any measure
   supported on $\U(f)$ that is a smooth function multiple of arc length. Since
   the curvature of $\U(f)$ is bounded away from zero, a result of Stein
   \cite[Thm.\ 2 of \S VIII.3.2]{Stein} shows that $\widehat{\mu}(\bn)\to0$ as
   $\|\bn\|\to\infty$. Hence the point $x$ with
   $x_{\bn}=\eta(\Re[\widehat{\mu}(\bn)])$ is in $\Daf(X_f)$. However, the
   same result shows that $\widehat{\mu}(\bn)$ must decay so slowly that
   $x$ is never summable. 

   The dynamical properties of $\af$ are still
   somewhat mysterious. In particular, we do not know whether Theorem
   ~\ref{th:main} holds here.
\end{example}

\begin{example}
   Let $d=2$ and $f(u,v)=u^2+u^{-2}-2u-2u^{-1}+v+v^{-1}$. Then $f$ is
   irreducible, and it is easy to check that $\U(f)$ consists of two
   1-dimensional curves together with the point $(1,1)$. This
   illustrates the possibility that the connected components of $\U(f)$
   may have different dimensions.
\end{example}

\begin{example}
   \label{exam:1+x+y+z}
   Let $d=3$ and $f(u,v,w)=1+u+v+w$. This example has appeared in the
   literature due to the interesting value of its logarithmic Mahler
   measure \cite{Smyth}: $\m(f)=7\zeta(3)/2\pi^2$. It is easy to verify
   that $\U(f)$ is the union of three circles $\{(-1,s,-s):s\in\SS\}$,
   $\{(s,-1,-s):s\in\SS\}$, and $\{(s,-s,-1):s\in\SS\}$. Hence $f$ is
   atoral. For example, both $uvw+uv+uw+vw$ and $(u-v)(v-w)(w-u)$ vanish
   on $\U(f)$ but are not in $(f)$.

   Observe that $\U(f)$ contains infinitely many torsion points. Indeed,
   if $\G_n=n\ZZ^3$, then
   \begin{displaymath}
      |\Omega_{\G_n}\cap\U(f)|=
      \begin{cases}
         3n-3 &\text{if $n$ is even}\\
         0    &\text{if $n$ is odd}. 
      \end{cases}
   \end{displaymath}
   Hence the dimension of $\Fix_{\G_n}^\circ(\af)$ is unbounded, an
   issue that will need to be dealt with in the proof of our main
   theorem (see Lemma \ref{lem:mann}).

   This example is a special case of the fact that if $\U(f)$ has
   infinitely many torsion points, then all of these must lie on a
   finite union of cosets of rational subtori.
\end{example}

\begin{example}
   \label{exam:2+x+y+z}
   Let $d=3$ and $f(u,v,w)=2+u+v+w$, which is irreducible. Since $f$ is
   not symmetric, it is atoral. One can consider points in $\U(f)$ as
   possible positions of a closed linkage of four rods, one fixed horizontal
   rod of length 2, and three others of length 1 joined end to end. This
   system has one degree of freedom, and so $\U(f)$ is a smooth loop in
   $\SS^3$.

   By Theorem \ref{thm:summable-points}, $\Dafs(X_f)\ne\{0\}$ (in fact is
   dense in $X_f$). Now $\U(f)$ is a smooth curve with finite order of
   contact with every hyperplane (i.e., has \emph{finite type} in the
   terminology of \cite{Stein}).  Hence if $\mu$ is any measure on $\U(f)$
   that is a smooth multiple of arc length, then by \cite[Thm.\ 2, \S
   VIII.3.2]{Stein} we have that $\widehat{\mu}(\bn)\to0$ as
   $\|\bn\|\to\infty$. Hence $x=(x_{\bn})$, where
   $x_{\bn}=\eta(\Re[\widehat{\mu}(\bn)])$, is in $\Daf(X_f)$, but decays too
   slowly to be summable.

   In this example  $\Dafs(X_f)$ is
   countable and dense in $X_f$ by Corollary \ref{cor:density}, while
   $\Daf(X_f)\smallsetminus\Dafs(X_f)$ is uncountable.
\end{example}

\begin{example}
   \label{exam:complex-factorization}
   The roots of $u^2-9u+19$ are $\xi\approx 3.3819$ and $\zeta\approx
   5.6180$. Consider
   \begin{align*}
      f(u,v)&\coloneqq g_1(u,v)g_2(u,v)=(\xi-u-u^{-1}-v-v^{-1})
      (\zeta- u-u^{-1}-v-v^{-1})\\
      &=23
      +u^2+u^{-2}-9u-9u^{-1}+v^2+v^{-2}-9v-9v^{-1}\\
      &\quad\quad\quad\quad\quad+ 2uv+2u^{-1}v^{-1}+2uv^{-1}+2u^{-1}v.  
   \end{align*}
   Here $f$ is irreducible in $R_2$, or equivalently in
   $\QQ[u^{\pm1},v^{\pm1}]$, but factors in
   $\CC[u^{\pm1},v^{\pm1}]$. Furthermore, $\U(g_1)$ is a 1-dimensional
   curve in $\SS^2$, analogous to Example \ref{exam:subharmonic}, while
   $\U(g_2)=\emptyset$, and so $f$ is toral in our sense. However, $f$
   is neither toral nor atoral in the sense of \cite{AMS} since it has
   mixed factors over $\CC$.
\end{example}

\begin{remark}
   If $f\in\rd$ is irreducible, but factors over $\CC$, it can be shown
   that except for a trivial scalar normalization, each factor has
   coefficients that are algebraic numbers, i.e. this factorization
   already takes place in $\overline{\QQ}[u_1^{\pm1},\dots,u_d^{\pm1}]$.
   Indeed, by Dedekind's Prague Theorem \cite[p.\ 2]{Edwards}, the
   coefficients of each factor can be taken to be algebraic integers.
\end{remark}

\section{Symbolic covers and specification}
\label{sec:symbolic-covers}

Let $f\in\rd$ be an irreducible atoral Laurent polynomial and $\af$ be
the corresponding algebraic $\zd$-action on $X_f$. By Theorem
\ref{thm:summable-points}, there are nonzero summable homoclinic points
for $\af$. Fix one of these, say $x$. As pointed out in \cite{LS}, there
is a surjective, shift-equivariant map $\xi_x\colon\lizdz\to X_f$
defined by
\begin{displaymath}
   \xi_x(v)=\sum_{\bn\in\zd}v_{\bn}\af^{-\bn}(x),
\end{displaymath}
where coordinatewise convergence follows from summability of $x$. In
fact, $\xi_x$ is surjective when restricted to a suitably large ball of
radius $K$ in $\lizdz$, thereby providing a symbolic cover for $X_f$
with symbols $\{-K,\dots,K\}$.

Having established the existence of summable homoclinic points for
atoral polynomials here, the proof in \cite[Thm.\ 8.2]{LSV} using
symbolic covers applies to yield the following remarkably strong
specification properties of $\af$.

\begin{proposition}[{\cite[Thm.\ 8.2]{LSV}}]
   \label{thm:specification}
   Let $f\in\rd$ be an irreducible atoral polynomial
   and $\af$ be the corresponding algebraic $\zd$-action on $X_f$. Fix a
   translation-invariant metric $\delta$ on $X_f$. Then for every
   $\eps>0$ there exists a number $p(\eps)>0$ with the following
   properties:
   \begin{enumerate}
     \item For every finite collection $\{Q_1,\dots,Q_r\}$ of finite
      subsets of $\zd$ with
      \begin{displaymath}
         \textup{(*)}\quad
         \textup{dist}(Q_j,Q_k)\coloneqq \min_{\bm\in Q_j,\ \bn\in Q_k}
         \|\bm-\bn\|\ge p(\eps) \enspace \text{for $1\le j<k\le r$},
      \end{displaymath}
      every collection $\{x^{(1)},\dots,x^{(r)}\}\subset X_f$, and every
      $\G\in\SF$ with
      \begin{displaymath}
         \textup{dist}(Q_j+\bk,Q_k)\ge p(\eps) \text{ for $1\le j<k\le
          r$ and every $\bk\in\G\smallsetminus\{\bzero\}$,}
       \end{displaymath}
       there is a $y\in\FG(\af)$ with
       \begin{displaymath}
          \textup{(**)}\quad\quad\ \ 
          \delta\bigl(\af^{\bn}(y),\af^{\bn}(x^{(j)})\bigr)<\eps \text{ for $1\le
          j\le r$ and every $\bn\in Q_j$}.
       \end{displaymath}
      \item For every finite  collection $\{Q_1,\dots,Q_r\}$ of finite
      subsets of $\zd$ satisfying \textup{(*)} and  every
      collection $\{x^{(1)},\dots,x^{(r)}\}\subset X_f$ there is a point
      $y\in\Dafs(X_f)$ satisfying \textup{(**)}.
   \end{enumerate}
\end{proposition}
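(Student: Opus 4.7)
The plan is to mimic the construction from \cite[Thm.~8.2]{LSV}, now that Theorem \ref{thm:summable-points} supplies the key input. First fix a nonzero $x \in \Dafs(X_f)$; by Theorem \ref{thm:summable-points} we may even take $x$ so that its shift-translates generate a dense subgroup of $X_f$. Define the symbolic cover $\xi_x\colon\lizdz \to X_f$ by $\xi_x(v)=\sum_{\bn\in\zd} v_{\bn}\,\af^{-\bn}(x)$; summability of $x$ makes this series coordinatewise absolutely convergent. A standard compactness argument (as in \cite{LS}) produces an integer $K\ge 1$ such that $\xi_x$ is surjective already when restricted to the ball $B_K=\{-K,\dots,K\}^{\zd}$.

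Given $\eps>0$, use summability to pick $p(\eps)$ so large that the tail sum $\sum_{\|\bn\|\ge p(\eps)/2} \dT x_{\bn}\dT$ is smaller than a threshold $\delta_0(\eps,K,\delta)$ (computed from the metric $\delta$) that will absorb all error terms below. Given targets $x^{(1)},\dots,x^{(r)}$ and windows $Q_1,\dots,Q_r$ satisfying (*), lift each $x^{(j)}$ to some $v^{(j)}\in B_K$ with $\xi_x(v^{(j)})=x^{(j)}$, and truncate $v^{(j)}$ to obtain $\tilde v^{(j)}\in B_K$ supported on the $p(\eps)/2$-neighborhood $Q_j^+$ of $Q_j$. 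The separation (*) makes the $Q_j^+$ pairwise disjoint, so $v=\sum_{j=1}^{r}\tilde v^{(j)}\in B_K$, and $y=\xi_x(v)$ lies in $\Dafs(X_f)$ (it is a finite $\ZZ$-combination of shifts of $x$). For part~(2) and each $\bn\in Q_j$, the identity
\[
\af^{\bn}(y)-\af^{\bn}(x^{(j)})=-\sum_{\bm\notin Q_j^+} v^{(j)}_{\bm}\,\af^{\bn-\bm}(x)\;+\;\sum_{k\ne j}\sum_{\bm\in Q_k^+} v^{(k)}_{\bm}\,\af^{\bn-\bm}(x)
\]
shows that the coordinates of the difference are controlled by the tail bound, since every index $\bm$ appearing on the right satisfies $\|\bn-\bm\|\ge p(\eps)/2$.

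For part~(1), replace $v$ by its formal $\G$-periodization $v^{\G}=\sum_{\bk\in\G}\sigma^{\bk}(v)$ and define $y_{\G}=\sum_{\bm\in\zd} v_{\bm}\,\af^{-\bm}(x_{\G})$, where $x_{\G}=\sum_{\bk\in\G}\af^{-\bk}(x)\in\FG(\af)$ converges in $X_f$ by summability. Then $y_{\G}$ is $\G$-periodic by construction, and unfolding $x_{\G}$ shows that $\af^{\bn}(y_{\G})-\af^{\bn}(x^{(j)})$ splits as the non-periodic error of the previous paragraph plus an additional sum indexed by $\bk\in\G\setminus\{\bzero\}$ with summand $\sum_{\bm\in Q_k^+} v^{(k)}_{\bm}\,\af^{\bn-\bm-\bk}(x)$. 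The second separation hypothesis $\dist(Q_j+\bk,Q_k)\ge p(\eps)$ forces $\|\bn-\bm-\bk\|\ge p(\eps)/2$ throughout, so this contribution is again controlled by the chosen tail. The main obstacle is precisely the careful bookkeeping in this last paragraph: the two layers of error terms (truncation of each $v^{(j)}$, and the $\G$-translate correction used to make the output periodic) must be simultaneously dominated by the tail estimate arising from summability, uniformly in $\bn\in Q_j$. Once the constant $p(\eps)$ is chosen large enough to swallow both contributions, the estimate (**) follows; this is the content of the argument in \cite[Thm.~8.2]{LSV}, which now applies verbatim because Theorem \ref{thm:summable-points} has delivered the summable homoclinic point $x$ used throughout.
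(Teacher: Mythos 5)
Your proposal follows exactly the route the paper takes: establish a nonzero summable homoclinic point via Theorem \ref{thm:summable-points}, form the symbolic cover $\xi_x(v)=\sum_{\bn}v_{\bn}\af^{-\bn}(x)$ surjective on a ball $\{-K,\dots,K\}^{\zd}$, and then run the truncation-and-periodization argument of \cite[Thm.~8.2]{LSV} with errors absorbed by the tail of the summable point. This is precisely what the paper does (it simply cites that argument after constructing $\xi_x$), so your write-up is a correct, slightly more detailed rendering of the same proof.
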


\section{Proof of the main theorem}
\label{sec:main}

To begin the proof of Theorem \ref{th:main}, we fix an atoral
irreducible Laurent polynomial $f\in\rd$, and let $\af$ be the
corresponding cyclic algebraic $\zd$-action on $X_f$.

Roughly speaking, with the availability of summable homoclinic points,
obtaining sufficiently many separated points in $\FG(\af)$ is relatively
easy. However, many of these could lie in the same coset of the
connected component $\FGo(\af)$ of the identity. In order to show that
this does not affect the logarithmic growth rate of $\PG(\af)$, we
invoke a result of H.\ B.\ Mann which shows that $\U(f)\cap\Om$ lies in
the union of a finite number of cosets of rational subtori of
$\sd$. This enables us to embed $\FGo(\af)$ in a finite sum of subtori
with spanning sets whose cardinality have logarithmic growth rate
zero. This will force the number of cosets of $\FGo(\af)$ in $\FG(\af)$
to have the correct logarithmic growth rate.

We begin with some terminology.

\begin{definition}
   Let $Q\subset\zd$ be a finite set. Define a pseudometric $\dq$ on
   $\tzd$ by setting
   \begin{displaymath}
      \dq(x,y)=\max_{\bn\in Q} \dT x_{\bn}-y_{\bn}\dT,\text{\quad for
      $x,y\in\tzd$},
   \end{displaymath}
   where $\dT x-y \dT$ is the usual distance between $s,t\in\TT$.

   A set $F\subset\tzd$ is \emph{$(Q,\eps)$-separated} if
   $\dq(x,y)>\eps$ for every pair $x,y$ of distinct points in $F$. If
   $Y\subset\tzd$, then a set $F\subset Y$ is \emph{$(Q,\eps)$-spanning}
   if, for every $y\in Y$, there is an $x\in F$ with $\dq(x,y)<\eps$.

   If $\G\in\SF$, we write $\dg$ for the metric on $\FG(\sigma)$ defined
   by $\dg=\dq$ for any fundamental domain $Q$ for $\G$. A set
   $F\subset\FG(\sigma)$ is \emph{$(\G,\eps)$-separated} if
   $\dg(x,y)>\eps$ for every pair $x,y$ of distinct points in $F$, and
   there is an analogous definition of \emph{$(\G,\eps)$-spanning}.
\end{definition}

\begin{lemma}
   \label{lem:shrink}
   For every $\eps>0$ there is a finite set $A_{\eps}\subset\zd$ such
   that, for every $\G\in\SF$ and every fundamental domain $Q\subset\zd$
   of $\G$, the set $\FG(\af)$ is $\bigl(\bigcap_{\bm\in
   A_{\eps}}(Q-\bm)\bigr)$-spanning in $X_f$.
\end{lemma}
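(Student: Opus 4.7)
The plan is to derive Lemma \ref{lem:shrink} as a direct consequence of the specification property in Proposition \ref{thm:specification}. The idea is that by choosing $A_\eps$ to be a ball around the origin in $\zd$ of radius equal to the specification constant, the shrunken set $Q':=\bigcap_{\bm\in A_\eps}(Q-\bm)$ will automatically be well separated from its own $\G$-translates, which is exactly the hypothesis that allows specification to produce a $\G$-periodic orbit shadowing any prescribed orbit on $Q'$.

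First I would calibrate the metrics. Since $\delta$ is translation-invariant and metrizes the topology of $X_f$, there exists $\eps_0\in(0,\eps)$ such that $\delta(x,y)<\eps_0$ implies $\dT x_\bzero-y_\bzero\dT<\eps$ for all $x,y\in X_f$. Let $p=p(\eps_0)>0$ be the specification constant from Proposition \ref{thm:specification} and set
\begin{displaymath}
   A_\eps = \{\bm\in\zd:\|\bm\|\le p\}.
\end{displaymath}

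Next I would verify the combinatorial core of the argument: for any $\G\in\SF$ and any fundamental domain $Q$ for $\G$, the set $Q'=\bigcap_{\bm\in A_\eps}(Q-\bm)$ (i.e.\ the points $\bn\in Q$ whose $p$-neighborhood stays in $Q$) satisfies
\begin{displaymath}
   \dist(Q'+\bk,\,Q')>p\quad\text{for every }\bk\in\G\setminus\{\bzero\}.
\end{displaymath}
Indeed, if $\bn_1,\bn_2\in Q'$ and $\bm:=\bn_1+\bk-\bn_2$ satisfied $\|\bm\|\le p$, then $\bm\in A_\eps$ and hence $\bn_2+\bm=\bn_1+\bk\in Q$ by the definition of $Q'$; but $\bn_1\in Q$ as well, contradicting the disjointness of the $\G$-translates of the fundamental domain $Q$ since $\bk\ne\bzero$.

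Finally, for an arbitrary $y\in X_f$, apply Proposition \ref{thm:specification} with $r=1$, $Q_1=Q'$, $x^{(1)}=y$, and the subgroup $\G$ (the self-separation of $Q'$ just established supplies the required hypothesis) to produce $y'\in\FG(\af)$ with $\delta(\af^\bn(y'),\af^\bn(y))<\eps_0$ for every $\bn\in Q'$. Since $(\af^\bn(z))_\bzero=z_\bn$ for $z\in X_f$, the choice of $\eps_0$ yields $\dT y'_\bn-y_\bn\dT<\eps$ for every $\bn\in Q'$, which is precisely the $(Q',\eps)$-spanning condition for $\FG(\af)$ in $X_f$. The main obstacle is the verification of the self-separation claim for $Q'$, since this is the step that makes the uniform choice of $A_\eps$ (independent of $\G$ and $Q$) possible; everything else is routine metric bookkeeping and a single invocation of Proposition \ref{thm:specification}.
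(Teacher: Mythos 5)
Your argument is correct, but it reaches the lemma by a different route than the paper does. The paper's proof is essentially a citation: since atorality supplies a summable homoclinic point (the Fourier transform of $g/f^*$ for suitable $g$), the proof of Lemma 7.3 of \cite{LSV} — a direct periodization argument using the symbolic cover map $\xi_x$ built from that homoclinic point — carries over verbatim. You instead deduce the lemma from the specification property already recorded as Proposition \ref{thm:specification}, applied with $r=1$ and $Q_1=Q'=\bigcap_{\bm\in A_\eps}(Q-\bm)$; your combinatorial verification that $\dist(Q'+\bk,Q')>p$ for all $\bk\in\G\setminus\{\bzero\}$ (using that two points of a fundamental domain cannot differ by a nonzero element of $\G$) is exactly the point that makes $A_\eps$ uniform in $\G$ and $Q$, and the calibration of $\delta$ against the coordinate pseudometric via compactness and $(\af^{\bn}z)_{\bzero}=z_{\bn}$ is fine. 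The two routes are close in substance — Proposition \ref{thm:specification} is itself proved in \cite{LSV} by the same homoclinic/symbolic-cover machinery — so what your version buys is a self-contained deduction within this paper from an already stated black box, at the cost of being slightly less direct. Two small remarks: the hypothesis of Proposition \ref{thm:specification} is printed with ``$1\le j<k\le r$'', which read literally is vacuous for $r=1$; the intended condition includes $j=k$ (the self-separation of each $Q_j$ from its $\G$-translates), which is precisely what you verify, so your application uses the proposition as intended rather than as literally stated. Also, the degenerate case $Q'=\emptyset$ (possible if $Q$ is very thin) makes the spanning claim trivially true and needs no invocation of specification.
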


\begin{proof}
   Since $f$ is atoral, our discussion in Section \ref{sec:summable}
   shows that there is a $g\in\rd$ relatively prime to $f$ such that the
   Fourier transform of $g/f$ provides a summable homoclinic point for
   $\af$. The proof of Lemma 7.3 in \cite{LSV} is then also valid in
   this situation. 
\end{proof}

The next lemma is an easily proved special case of \cite[Cor.\ 5.6]{DS}.

\begin{lemma}
   \label{lem:folner}
   Let $(\G_n)_{n\ge1}$ be a sequence in $\SF$ with $\<\G_n\>\to\infty$
   as $n\to\infty$. Then there exists a sequence $(Q_n)_{n\ge1}$ of
   finite subsets of $\zd$ such that each $Q_n$ is a fundamental domain
   for $\G_n$, and also that $(Q_n)_{n\ge1}$ is a F{\o}lner sequence for
   $\zd$. 
\end{lemma}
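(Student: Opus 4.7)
The plan is to take $Q_n$ to be essentially the Dirichlet-Voronoi fundamental domain of $\Gn$ at the origin, and then derive the F\o lner property from the fact that this domain contains a large inscribed Euclidean ball as soon as $\<\Gn\>$ is large.

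Fix the Euclidean norm $|\cdot|_2$ on $\RR^d$ and set
\[
V_n \,=\, \bigl\{x\in\RR^d : |x|_2\le|x-\gamma|_2 \text{ for all } \gamma\in\Gn\bigr\},
\]
the closed Voronoi cell of $\bzero$ for $\Gn$. This is a symmetric convex polytope of Lebesgue volume $[\zd:\Gn]$. Since every $\gamma\in\Gn\smallsetminus\{\bzero\}$ satisfies $|\gamma|_2\ge\|\gamma\|\ge\<\Gn\>$, the perpendicular bisector between $\bzero$ and $\gamma$ sits at Euclidean distance at least $\<\Gn\>/2$ from the origin, so $V_n\supseteq B(\bzero,\<\Gn\>/2)$. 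Each $\Gn$-coset meets the interior of $V_n$ in at most one point (two would force a contradiction with the strict Voronoi inequality), so I define $Q_n\subseteq V_n\cap\zd$ by keeping, in each coset whose representatives in $V_n$ all lie on $\partial V_n$, a single representative picked by a fixed lexicographic rule. Then $Q_n$ is a fundamental domain for $\Gn$ with $|Q_n|=[\zd:\Gn]$.

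Next I verify the F\o lner condition. Fix $\bm\in\zd$. If $\bn\in Q_n$ has Euclidean distance to $\partial V_n$ strictly greater than $|\bm|_2$, then $\bn-\bm$ lies in the interior of $V_n$ and is therefore the unique representative of its coset in $V_n$, so $\bn-\bm\in Q_n$ and $\bn\in Q_n\cap(Q_n+\bm)$. Consequently
\[
Q_n\triangle(Q_n+\bm)\,\subseteq\,\bigl\{\bn\in\zd : \dist(\bn,\partial V_n)\le|\bm|_2\bigr\}.
\]
Since $V_n$ is convex and contains an inscribed ball of radius $r_n=\<\Gn\>/2$, decomposing $V_n$ into cones from $\bzero$ over its facets yields the isoperimetric-type bound
\[
\mathrm{Area}_{d-1}(\partial V_n)\,\le\,\frac{d\cdot\mathrm{Vol}(V_n)}{r_n}\,=\,\frac{2d\,[\zd:\Gn]}{\<\Gn\>}.
\]
A standard lattice-point count in an $|\bm|_2$-tube around $\partial V_n$ gives $|Q_n\triangle(Q_n+\bm)|\le C_d(|\bm|_2+1)\,\mathrm{Area}_{d-1}(\partial V_n)$, uniformly in $n$ once $\<\Gn\>$ is sufficiently large compared with $|\bm|_2$. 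Dividing by $|Q_n|=[\zd:\Gn]$ produces
\[
\frac{|Q_n\triangle(Q_n+\bm)|}{|Q_n|}\,\le\,\frac{C'_d(|\bm|_2+1)}{\<\Gn\>}\,\longrightarrow\,0,
\]
which is the F\o lner property at $\bm$.

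The only delicate point is the bookkeeping around $\partial V_n$: one must verify that lexicographic tiebreaking produces a genuine fundamental domain with $|Q_n|=[\zd:\Gn]$ (standard for Voronoi tilings), and justify the lattice-tube estimate uniformly in $n$. Both are routine convex-geometric considerations once $\<\Gn\>$ dominates the fixed $|\bm|_2$, which holds on the tail of the sequence; this is where the hypothesis $\<\Gn\>\to\infty$ is essential.
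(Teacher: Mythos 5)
Your construction is correct, but it is a genuinely different route from the paper's: the paper disposes of this lemma in one line by quoting Corollary 5.6 of Deninger--Schmidt \cite{DS}, a general result producing F{\o}lner fundamental-domain sequences for finite-index subgroups of residually finite amenable groups, whereas you build the domains explicitly as (tie-broken) Voronoi cells of $\Gn$ at the origin and extract the F{\o}lner property from the inradius bound $\<\Gn\>/2$ together with a surface-area/tube estimate. Your argument is self-contained and elementary for $\zd$, and it even yields a quantitative rate, $|Q_n\triangle(Q_n+\bm)|/|Q_n|=O\bigl((\|\bm\|+1)/\<\Gn\>\bigr)$, which the soft citation does not provide; what the citation buys is brevity and applicability beyond $\zd$. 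Two small points you should tighten, though neither is a real gap: (i) your argument directly places only $Q_n\smallsetminus(Q_n+\bm)$ in the $|\bm|_2$-tube around $\partial V_n$; for the other half either note that $|(Q_n+\bm)\smallsetminus Q_n|=|Q_n\smallsetminus(Q_n+\bm)|$ because $|Q_n+\bm|=|Q_n|$, or enlarge the tube to radius $2|\bm|_2$; (ii) the lattice-point count in the tube is most cleanly justified by covering it with unit cubes and comparing with the volume of the slightly fattened region, e.g.\ observing that for $\rho\le r_n$ the $\rho$-neighborhood of $\partial V_n$ is contained in $(1+c\rho/r_n)V_n\smallsetminus(1-c\rho/r_n)V_n$, whose volume is at most $C_d(\rho/r_n)\operatorname{Vol}(V_n)=C_d\rho\,|\zd/\Gn|/r_n$; this replaces the facet-cone bookkeeping and is uniform in $n$ once $\<\Gn\>$ exceeds a constant multiple of $|\bm|_2$, which the hypothesis $\<\Gn\>\to\infty$ guarantees on the tail.
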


\begin{lemma}
   \label{lem:enough-points}
   Let $\eps>0$ and let $(\G_n)_{n\ge1}$ be a sequence in $\SF$ with
   $\<\G_n\>\to\infty$. Choose a F{\o}lner sequence $(Q_n)_{n\ge1}$ of
   fundamental domains for the groups $(\G_n)_{n\ge1}$ as in Lemma
   \ref{lem:folner}. Let $A_{\eps}$ be the finite set chosen according
   to Lemma \ref{lem:shrink}. Define $$Q_n'=\bigcap_{\bm\in
   A_{\eps}}(Q_{n}-\bm).$$

   Then $(Q_n')_{n\ge1}$ is again a F{\o}lner sequence, and
   $|Q_n'|/|Q_n|\to1$ as $n\to\infty$. Furthermore, for every $n\ge1$
   there exists a $(\G_n,\eps)$-separated set
   $F_n(\eps)\subset\FGn(\af)$ which is $(Q_n',2\eps)$-spanning in $X_f$.
\end{lemma}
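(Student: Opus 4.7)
My plan is to settle the combinatorial assertions about $Q_n'$ first, and then to obtain $F_n(\eps)$ by taking a maximal $(\G_n,\eps)$-separated subset of $\FGn(\af)$ and using Lemma~\ref{lem:shrink} to pass from $\FGn(\af)$ to all of $X_f$. To keep set inclusions clean I will first enlarge $A_\eps$, if necessary, to contain $\bzero$; this is harmless because the spanning property in the pseudometric $\mathsf{d}_S$ is monotone in $S$ (if $S'\subseteq S$ and $F$ is $(S,\eps)$-spanning, then $F$ is $(S',\eps)$-spanning). After this modification $Q_n'\subseteq Q_n$ for all $n$.

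For the F{\o}lner and cardinality-ratio claims, I use the identity
$$Q_n\setminus Q_n'=\bigcup_{\bm\in A_\eps}\bigl(Q_n\setminus(Q_n-\bm)\bigr),$$
together with $|Q_n\setminus(Q_n-\bm)|=|(Q_n+\bm)\setminus Q_n|\le|(Q_n+\bm)\triangle Q_n|=o(|Q_n|)$, which holds for each fixed $\bm\in A_\eps$ by the F{\o}lner property of $(Q_n)$. Summing over the finite set $A_\eps$ yields $|Q_n\setminus Q_n'|/|Q_n|\to0$ and hence $|Q_n'|/|Q_n|\to1$. The F{\o}lner property of $(Q_n')$ follows from the inclusion
$$(Q_n'+\bk)\triangle Q_n'\subseteq\bigl((Q_n+\bk)\triangle Q_n\bigr)\cup\bigl((Q_n\setminus Q_n')+\bk\bigr)\cup(Q_n\setminus Q_n'),$$
each term being of size $o(|Q_n|)=o(|Q_n'|)$ for every fixed $\bk\in\zd$.

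For the third assertion I let $F_n(\eps)\subset\FGn(\af)$ be any maximal $(\G_n,\eps)$-separated set; such a set exists by a routine exhaustion argument. By maximality $F_n(\eps)$ is $(\G_n,\eps)$-spanning in $\FGn(\af)$, i.e., for every $x\in\FGn(\af)$ there is $z\in F_n(\eps)$ with $\dg(z,x)\le\eps$. Since $Q_n$ is a fundamental domain for $\G_n$ we have $\dg=\mathsf{d}_{Q_n}$, and the containment $Q_n'\subseteq Q_n$ gives $\mathsf{d}_{Q_n'}(z,x)\le\mathsf{d}_{Q_n}(z,x)\le\eps$. Now fix an arbitrary $y\in X_f$. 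Lemma~\ref{lem:shrink}, applied to $\G_n$ with the fundamental domain $Q_n$, furnishes $x\in\FGn(\af)$ with $\mathsf{d}_{Q_n'}(x,y)<\eps$. Combining these two estimates via the triangle inequality in $\mathsf{d}_{Q_n'}$ gives $\mathsf{d}_{Q_n'}(z,y)<2\eps$, so $F_n(\eps)$ is $(Q_n',2\eps)$-spanning in $X_f$.

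I do not foresee a serious obstacle. The substantive input---existence of summable homoclinic points produced from atorality, and the resulting shrinking-window spanning result---has already been packaged in Lemma~\ref{lem:shrink}. What remains is a standard maximal-separated-set argument together with bookkeeping for F{\o}lner sequences. The only mild subtlety is the mismatch between $(\G_n,\eps)$-separation, which uses the metric $\dg=\mathsf{d}_{Q_n}$ on the full fundamental domain $Q_n$, and $(Q_n',2\eps)$-spanning, which uses the narrower window $Q_n'$ dictated by Lemma~\ref{lem:shrink}; this is precisely what forces the inclusion $Q_n'\subseteq Q_n$ and is resolved by the harmless enlargement of $A_\eps$ to contain $\bzero$.
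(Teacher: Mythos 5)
Your proof is correct and follows essentially the same route as the paper: take a maximal $(\G_n,\eps)$-separated subset of $\FGn(\af)$, observe that maximality makes it $(\G_n,\eps)$-spanning in $\FGn(\af)$, and combine this with Lemma~\ref{lem:shrink} via the triangle inequality (the paper leaves the F{\o}lner bookkeeping implicit). Your enlargement of $A_\eps$ to contain $\bzero$ is unnecessary and slightly weakens what you prove (it shrinks $Q_n'$): since the points being compared all lie in $\FGn(\af)$, hence are $\G_n$-periodic, $\dg$ coincides with the supremum over all of $\zd$, so $\mathsf{d}_{Q_n'}(z,x)\le\dg(z,x)$ holds for the original $Q_n'$ of the statement without any containment $Q_n'\subseteq Q_n$.
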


\begin{proof}
   From Lemma \ref{lem:shrink} we know that $\FGn(\af)$ is
   $(Q_n',\eps)$-spanning in $X_f$ for all $n\ge1$. Let
   $F_n(\eps)\subset\FGn(\af)$ be a $(\G_,\eps)$-separated set of
   maximal cardinality. Then $F_n(\eps)$ is also $(\G_n,\eps)$-spanning
   in $\FGn(\af)$, hence $(Q_n',2\eps)$-spanning in $X_f$.
\end{proof}

It follows from \cite[Prop.\ 2]{D} that 
\begin{equation}
   \label{eqn:liminf}
   \lim_{\eps\to0}\,
   \liminf_{n\to\infty}\frac1{|\zd/\G_n|}\,\log|F_n(\eps)|=\h(\af). 
\end{equation}
The next, and more difficult, step in our proof consists of showing that
the number of distinct cosets of $\FGno(\af)$ intersecting $F_n(\eps)$
nontrivially has the same logarithmic growth rate as $F_n(\eps)$.

We first introduce some notation to linearize our situation. We write
$\lizdc$ for the space of bounded complex-valued functions on $\zd$, and
$\sigt$ for the $\zd$-shift action on this space. Similarly, $\sigt$ acts on the real
part $\lizdr$ of $\lizdc$. For $\G\in\SF$ we let $\lzdgc$ and $\lzdgr$
be the corresponding finite-dimensional $\G$-periodic subspaces.
For each $\bo\in\OG$ there is an element $\vo$ in $\lzdgc$ defined by
$\vo_{\bn}=\bo^{\bn}$ for all $\bn\in\zd$.

A set $S\subset\OG$ is called \emph{symmetric} if it is closed under
taking inverses. A function $c\colon S\to\CC$ on a symmetric set is
\emph{skew-symmetric} if $c(\bo^{-1})=\overline{c(\bo)}$. Let
$V(S,\CC)\subset\lzdgc$ denote the complex span of the points $\vo$
where $\bo\in S$, and $V(S,\RR)=V(S,\CC)\cap\lzdgr$. Then $V(S,\RR)$
consists of all sums of the form $\sum_{\bo\in S}c(\bo)\vo$ where $c$ is
skew-symmetric on $S$, which has real dimension $|S|$.

For $\Delta\in\SF$ let $B_1(\lzddr)$ denote the unit ball in $\lzddr$
with respect to the $\ell^{\infty}$-norm.

\begin{lemma}
   \label{lem:estimate}
   Let $\Delta\in\SF$ and $0<\eps<1$. Then there is a
   $(\Delta,\eps)$-spanning set $F\subset B_1(\lzddr)$ with cardinality
   $|F|<(2/\eps)^{|\zd/\Delta|}$. Hence
   $\eta(\lzddr)=\Fix_{\Delta}(\sig)=\{x\in\tzd:\sig^{\bn}x=x\text{ for
   all $\bn\in\Delta$}\}$ has a $(\Delta,\eps)$-spanning set of
   cardinality $<(2/\eps)^{|\zd/\Delta|} $.
\end{lemma}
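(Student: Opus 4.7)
The plan is to reduce the assertion to a standard $\ell^{\infty}$ covering estimate for a finite-dimensional cube and then push the result down to the torus via $\eta$.

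First I would fix any fundamental domain $Q \subset \zd$ for $\Delta$ and use restriction to $Q$ to identify $\lzddr$ with $\RR^{|\zd/\Delta|}$ via $v \mapsto (v_{\bn})_{\bn \in Q}$. Under this identification the pseudometric associated to $Q$ becomes the ordinary $\ell^{\infty}$ metric on $\RR^{|\zd/\Delta|}$, while $B_1(\lzddr)$ becomes the cube $[-1, 1]^{|\zd/\Delta|}$; since the elements of $\lzddr$ are $\Delta$-periodic, the choice of $Q$ plays no role. The first claim thus reduces to $\eps$-spanning $[-1,1]^{|\zd/\Delta|}$ in the $\ell^{\infty}$ metric by fewer than $(2/\eps)^{|\zd/\Delta|}$ points.

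For this I exploit the hypothesis $0 < \eps < 1$. The open interval $(1/\eps,\, 2/\eps)$ has length $1/\eps > 1$ and therefore contains an integer $N$; in particular $1/N < \eps$ and $N < 2/\eps$. Place $N$ equally spaced centres $c_j = -1 + (2j-1)/N$, $1 \le j \le N$, in $[-1, 1]$, so that every point of $[-1, 1]$ lies within distance $1/N$ of some $c_j$. Take
\begin{equation*}
   F = \bigl\{\,(c_{j(\bn)})_{\bn \in Q} : j \colon Q \to \{1, \ldots, N\}\,\bigr\} \subset B_1(\lzddr).
\end{equation*}
Then $|F| = N^{|\zd/\Delta|} < (2/\eps)^{|\zd/\Delta|}$, and coordinatewise approximation immediately shows that $F$ is $(\Delta, \eps)$-spanning in $B_1(\lzddr)$.

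For the second statement I would observe that $\eta \colon \lzddr \to \Fix_\Delta(\sig)$ is surjective, since any $\Delta$-periodic torus point lifts to a $\Delta$-periodic real sequence with values in $[0, 1) \subset [-1, 1]$, which lies in $B_1(\lzddr)$. Because $\dT s - t \dT \le |s - t|$ for all $s, t \in \RR$, the projection $\eta$ is distance-nonincreasing, so $\eta(F)$ is $(\Delta, \eps)$-spanning in $\Fix_\Delta(\sig)$ with $|\eta(F)| \le |F| < (2/\eps)^{|\zd/\Delta|}$, as claimed. There is no serious obstacle here; the only delicate point is choosing $N$ to satisfy both $1/N < \eps$ and $N < 2/\eps$ at once, which is exactly what the hypothesis $\eps < 1$ provides.
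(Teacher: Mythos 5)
Your proof is correct and follows essentially the same route as the paper: identify $\lzddr$ with functions on a fundamental domain, build a spanning set for $B_1(\lzddr)$ as a product of per-coordinate finite grids of size less than $2/\eps$, and push it to $\Fix_{\Delta}(\sig)$ using that $\eta$ does not increase distances. The only difference is cosmetic (you use $N$ equally spaced centres with $1/\eps<N<2/\eps$, the paper uses integer multiples of $\eps$), so nothing further is needed.
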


\begin{proof}
   Let $Q$ be a fundamental domain for $\Del$. Then
   $\lzddr\cong\ell^{\infty}(Q,\RR)$ with the $\ell^{\infty}$-norm. For
   each $\bq\in Q$ let $F_{\bq}=\{j\eps 1_{\{\bq\}}:-1/\eps<j<1/\eps\}$,
   so $|F_{\bq}|<2/\eps$. Put $F=\sum_{\bq\in Q}F_{\bq}$. Clearly $F$ is
   $(\Del,\eps)$-spanning for $B_1(\lzddr)$, and
   $|F|<(2/\eps)^{|Q|}=(2/\eps)^{|\zd/\Del|}$.

   Finally, $\eta(B_1(\lzddr))=\Fix_{\Del}(\sig)$ and $\eta$ is a local
   isometry. Hence $\eta(F)$ is a $(\Del,\eps)$-spanning set for
   $\Fix_{\Delta}(\sig)$ of cardinality $<(2/\eps)^{|\zd/\Del|}$.
\end{proof}

We will be using proper closed subgroups of $\sd$ to capture the torsion
points in $\U(f)$, which are responsible for the dimension of
$\FGo(\af)$. To do so, we define, for every $\bzero\ne\bm\in\zd$ the
subgroup $\Hm=\{\bs\in\sd:\bs^{\bm}=1\}\subset\sd$. Observe that
$\Hm^{\perp}\coloneqq\{\bn\in\zd:\bs^{\bn}=1\text{ for all
$\bs\in\Hm$}\}$ is just $\ZZ\bm$. The next lemma allows us to estimate
the size of the slice of $\OG$ contained in $\Hm$.

\begin{lemma}
   \label{lem:slice}
   Let $\bzero\ne\bm\in\zd$ and $\G\in\SF$. Then
   \begin{displaymath}
      |\OG\cap\Hm|\le\frac{\|\bm\|}{\<\G\>}\,|\OG|.
   \end{displaymath}
\end{lemma}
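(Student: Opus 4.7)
The plan is to use Pontryagin duality between the finite groups $\OG$ and $\zd/\G$ to convert the size of $\OG \cap \Hm$ into information about the order of $\bm$ modulo $\G$, and then to use the definition of $\<\G\>$ to bound that order from below.

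First I would observe that, by the duality pairing identifying $\OG$ with the dual of $\zd/\G$, the assignment $\bo \mapsto \bo^{\bm}$ is precisely the character of $\OG$ attached to the coset $\bm+\G \in \zd/\G$. Its kernel is by definition $\OG \cap \Hm$. Standard Pontryagin duality for finite abelian groups tells us that the order of this character equals the order of $\bm + \G$ in $\zd/\G$, so if we denote this order by $k$, then
\begin{displaymath}
   |\OG \cap \Hm| = \frac{|\OG|}{k}.
\end{displaymath}

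Next I would bound $k$ from below. By definition of $k$, the point $k\bm$ lies in $\G$, and since $\bm \ne \bzero$ and $k \ge 1$ we have $k\bm \ne \bzero$. From the definition $\<\G\> = \min\{\|\bn\| : \bzero \ne \bn \in \G\}$ we therefore get
\begin{displaymath}
   \<\G\> \le \|k\bm\| = k\,\|\bm\|, \qquad\text{so}\qquad k \ge \frac{\<\G\>}{\|\bm\|}.
\end{displaymath}
Substituting this into the duality identity above gives the desired inequality. There is no real obstacle here; the only conceptual content is recognizing $\OG \cap \Hm$ as the kernel of the dual character and then extracting the lower bound on the order from the definition of $\<\G\>$.
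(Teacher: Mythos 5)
Your proof is correct and follows essentially the same route as the paper: both identify $|\OG/(\OG\cap\Hm)|$ with the order $k$ of $\bm+\G$ in $\zd/\G$ via duality (the paper phrases it through annihilators, $(\OG\cap\Hm)^{\perp}=\G+\ZZ\bm$, while you phrase it through the kernel of the character $\bo\mapsto\bo^{\bm}$), and then both bound $k\ge\<\G\>/\|\bm\|$ from $\bzero\ne k\bm\in\G$. No gaps; this is the intended argument.
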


\begin{proof}
   Basic duality shows that $\OG/(\OG\cap\Hm)$ is isomorphic with
   $(\OG\cap\Hm)^{\perp}/\OG^{\perp}$, and that $\OG^{\perp}=\G$ and
   $(\OG\cap\Hm)^{\perp}=\OG^{\perp}+\Hm^{\perp}=\G+\ZZ\bm$. Hence
   $|\OG/(\OG\cap\Hm)|=|(\G+\ZZ\bm)/\G|$, which is just the order $k$ of
   $\bm$ in $\zd/\G$. Since $\bzero\ne k\bm\in\G$, we have
   $\|k\bm\|\ge\<\G\>$, so that $k\ge\<\G\>/\|\bm\|$. Hence
   \begin{displaymath}
      |\OG\cap\Hm|=\frac1k\,|\OG|\le\frac{\<\G\>}{\|\bm\|}\,|\OG|.
      \qedhere
   \end{displaymath}
\end{proof}

\begin{lemma}
   \label{lem:coset}
   Let $\bzero\ne\bm\in\zd$ and $\bs\in\sd$. If
   $(\bs\cdot\Hm)\cap\Om\ne\emptyset$, then there is a $k\ge1$ such that
   $\bs\cdot\Hm\subset H_{k\bm}$.
\end{lemma}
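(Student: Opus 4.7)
The plan is a direct computation from the definitions. Pick any element of $(\bs\cdot\Hm)\cap\Om$ and factor it as $\bxi=\bs\cdot\bh$ with $\bh\in\Hm$, so that $\bh^{\bm}=1$. Since $\bxi\in\Om$, every coordinate of $\bxi$ is a root of unity, and therefore so is $\bxi^{\bm}=\xi_1^{m_1}\cdots\xi_d^{m_d}$. Choose a positive integer $k$ such that $\bxi^{k\bm}=1$.

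Next I would transfer this root-of-unity condition from $\bxi$ to $\bs$. Writing $\bs=\bxi\bh^{-1}$, we compute
\begin{equation*}
   \bs^{k\bm}=\bxi^{k\bm}\cdot(\bh^{\bm})^{-k}=1\cdot 1=1,
\end{equation*}
so that $\bs\in H_{k\bm}$. The same $k$ also handles $\Hm$: any $\bh'\in\Hm$ satisfies $(\bh')^{\bm}=1$, hence $(\bh')^{k\bm}=1$, giving $\Hm\subset H_{k\bm}$. Since $H_{k\bm}$ is a subgroup of $\sd$, it is closed under products, so
\begin{equation*}
   \bs\cdot\Hm\subset H_{k\bm}\cdot H_{k\bm}=H_{k\bm},
\end{equation*}
which is the desired conclusion.

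There is essentially no obstacle here; the only substantive input is that coordinates of torsion points are roots of unity, which is the definition of $\Om$, together with the fact that $\Hm$ is a subgroup contained in $H_{k\bm}$ whenever $k\ge1$. The integer $k$ produced in the argument is simply the order of the root of unity $\bxi^{\bm}$.
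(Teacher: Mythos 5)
Your proof is correct and follows essentially the same route as the paper: both arguments pick a torsion point in the coset $\bs\cdot\Hm$ and take $k$ to be an order making all the relevant $k\bm$-th powers equal to $1$ (the paper uses the order of the torsion point itself, you use the order of $\bxi^{\bm}$, a trivial variation). Your write-up just makes explicit the verification that the paper leaves implicit.
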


\begin{proof}
   If $\bs\cdot\Hm\cap\Om\ne\emptyset$, then there is an $\bs'\in\Om$
   such that $\bs'\cdot\Hm=\bs\cdot\Hm$. Let $k$ be the order of $\bs'$
   in $\sd$.
\end{proof}

\begin{lemma}
   \label{lem:mann}
   Let $0\ne f\in\rd$. Then there are nonzero $\bm_1,\dots,\bm_L\in\zd$
   such that
   \begin{displaymath}
      \U(f)\cap\Om\subset\bigcup_{j=1}^L H_{\bm_j}.
   \end{displaymath}
\end{lemma}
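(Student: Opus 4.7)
The plan is to invoke the classical theorem of H.~B.~Mann on vanishing linear combinations of roots of unity. Write $f=\sum_{\bm\in S}f_\bm\bu^\bm$ with $S\subset\zd$ the finite support of $f$. Every torsion point $\bs\in\U(f)\cap\Om$ then satisfies the integer linear relation
\begin{displaymath}
   \sum_{\bm\in S}f_\bm\bs^\bm=0
\end{displaymath}
among the roots of unity $\bs^\bm$. The strategy is to extract from this relation, via Mann, a forced multiplicative dependence of the form $\bs^{\bm''}=1$ for some nonzero $\bm''$ drawn from a finite list that depends only on $f$.

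First, for each $\bs$ I would construct a partition $\mathcal{P}_\bs$ of $S$ into \emph{minimal vanishing blocks}: start with $\{S\}$ and, as long as some block $T$ admits a proper nonempty subset $T'\subsetneq T$ with $\sum_{\bm\in T'}f_\bm\bs^\bm=0$, split $T$ into $T'$ and $T\setminus T'$ (the complementary subsum also vanishes). Since $|S|$ is finite, this process terminates, and each resulting block $T$ gives a non-degenerate vanishing sum $\sum_{\bm\in T}f_\bm\bs^\bm=0$ (no proper subsum vanishes). At least one such block must have $|T|\ge2$: otherwise every block is a singleton $\{\bm\}$ with $f_\bm\bs^\bm=0$, forcing $f_\bm=0$ and contradicting $\bm\in S$.

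Next, Mann's theorem applied to each non-degenerate vanishing subsum yields a constant $N_T$, depending only on $|T|$ (and hence ultimately only on $f$), such that every ratio $\bs^{\bm}/\bs^{\bm'}=\bs^{\bm-\bm'}$ with $\bm,\bm'\in T$ is a root of unity whose order divides $N_T$; equivalently, $\bs\in H_{N_T(\bm-\bm')}$. For each partition $\mathcal{P}$ of $S$ that actually arises, I pick a non-singleton block $T_\mathcal{P}\in\mathcal{P}$, pick distinct $\bm_\mathcal{P},\bm'_\mathcal{P}\in T_\mathcal{P}$, and set $\bm(\mathcal{P})=N_{T_\mathcal{P}}\bigl(\bm_\mathcal{P}-\bm'_\mathcal{P}\bigr)\in\zd\setminus\{\bzero\}$. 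By construction, every $\bs\in\U(f)\cap\Om$ with $\mathcal{P}_\bs=\mathcal{P}$ lies in $H_{\bm(\mathcal{P})}$.

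Finally, since $S$ is finite there are only finitely many partitions of $S$, so letting $\mathcal{P}$ range over those that occur and assembling the corresponding $\bm(\mathcal{P})$ into a list $\bm_1,\dots,\bm_L$ gives the desired cover $\U(f)\cap\Om\subset\bigcup_{j=1}^L H_{\bm_j}$. The only substantive ingredient is the correct quantitative invocation of Mann's theorem on non-degenerate vanishing sums of roots of unity; the remainder is a purely combinatorial refinement argument.
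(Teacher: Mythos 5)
Your argument is correct, and it reaches the conclusion by a genuinely more direct route than the paper. The paper's own proof is a two-line quotation: it cites Mann's result in its structural form --- the torsion points of $\U(f)$ lie in a finite union of cosets of proper connected rational subtori of $\SS^d$ (the toral case of Manin--Mumford) --- and then invokes Lemma \ref{lem:coset} to replace each such coset that meets $\Om$ by a subgroup $H_{k\bm}$. You instead work straight from the quantitative form of Mann's theorem on vanishing sums of roots of unity: you split the relation $\sum_{\bm\in S}f_{\bm}\bs^{\bm}=0$ into minimal vanishing blocks, observe that a singleton block would force some $f_{\bm}=0$ and hence some block $T$ has at least two terms, and then Mann's bound for non-degenerate relations with rational coefficients gives $\bs^{N_T(\bm-\bm')}=1$ with $N_T$ depending only on $|T|$, i.e.\ $\bs\in H_{N_T(\bm-\bm')}$. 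In effect you re-prove the relevant instance of the structural statement rather than quoting it, and this buys something concrete: the vectors $\bm_1,\dots,\bm_L$ are explicitly of the form $N(\bm-\bm')$ with $\bm\ne\bm'$ in the support of $f$ and $N$ a product of primes bounded in terms of the number of terms of $f$, which makes the computability of the $\bm_j$ (the point of the paper's remark following the lemma) completely transparent; indeed one could even drop the partition bookkeeping and simply take all vectors $N(\bm-\bm')$ with $N$ the product of primes at most $|S|$. The paper's route is shorter because it delegates exactly this combinatorial reduction to the cited structure theorem and to Lemma \ref{lem:coset}; the only point your write-up must keep straight is the precise hypothesis of Mann's theorem (rational coefficients, no vanishing proper subsum), and your block decomposition is designed precisely to guarantee it, so there is no gap.
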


\begin{proof}
   A special case of the Manin-Mumford conjecture established by Mann
   \cite{Mann} shows that $\U(f)\cap\Om$ is contained in a finite union
   of cosets of connected proper rational subtori (i.e., closed,
   connected subgroups) of $\sd$. By Lemma \ref{lem:coset}, we can embed
   these cosets into appropriate $H_{\bm_j}$.
\end{proof}

\begin{remark}
   There is a finite procedure for computing the $\bm_j$ appearing in
   the previous lemma (see \cite[\S3.1]{GR} or \cite[\S6]{AS}).
\end{remark}

\begin{lemma}
   Let $\G\in\SF$ and $0\ne f\in\rd$. Define
   $f(\sigt)\colon\lizdr\to\lizdr$ as above, and put $V_{\G}(f)=\ker
   f(\sigt)\cap\lzdgr$. Then $V_{\G}(f)=V(\U(f)\cap\OG,\RR)$ and
   $\eta(V_{\G}(f))=\FGo(\af)$. 
\end{lemma}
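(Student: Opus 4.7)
The plan is to handle the two equalities separately, linearizing the problem on the finite-dimensional space $\lzdgc$ using character theory of the finite abelian group $\zd/\G$.

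For the first equality, I would start from the observation that the characters $\{\vo : \bo \in \OG\}$ form a $\CC$-basis of $\lzdgc$, since $\OG$ is the dual group of $\zd/\G$ and $|\OG| = |\zd/\G| = \dim_{\CC}\lzdgc$. A direct computation gives
\begin{displaymath}
   (\sigt^{\bm}\vo)_{\bn} = \vo_{\bm+\bn} = \bo^{\bm+\bn} = \bo^{\bm}\vo_{\bn},
\end{displaymath}
so $\sigt^{\bm}\vo = \bo^{\bm}\vo$ and hence $f(\sigt)\vo = f(\bo)\vo$. Thus the $\CC$-linear operator $f(\sigt)$ acts diagonally in the basis $\{\vo\}$, and its kernel on $\lzdgc$ is the span of those $\vo$ with $f(\bo) = 0$, i.e.\ with $\bo \in \U(f) \cap \OG$. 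Since $f$ has real coefficients, $\U(f) \cap \OG$ is closed under $\bo \mapsto \bo^{-1} = \overline{\bo}$, so intersecting this complex kernel with the real part $\lzdgr$ yields precisely $V(\U(f) \cap \OG, \RR)$, proving $V_{\G}(f) = V(\U(f)\cap\OG,\RR)$.

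For the second equality, one inclusion is immediate: if $v \in V_{\G}(f)$ then $f(\sigt)v = 0 \in \lizdz$, so $v \in W_f \cap \lzdgr$ and $\eta(v) \in \FG(\af)$. Because $V_{\G}(f)$ is a real vector space and $\eta$ is continuous, $\eta(V_{\G}(f))$ is a connected subgroup of $\FG(\af)$ containing $0$, hence contained in $\FGo(\af)$. For the reverse inclusion, I would use the structure of closed subgroups of finite-dimensional tori: $\Fix_{\G}(\sig) = \eta(\lzdgr)$ is a torus of real dimension $|\zd/\G|$, and $\FG(\af)$ is a closed subgroup of it, so its identity component $\FGo(\af)$ is a subtorus that lifts under $\eta$ to a real linear subspace $V_0 \subseteq \lzdgr$ with $\FGo(\af) = \eta(V_0)$. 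Since $V_0 \subseteq \eta^{-1}(X_f) = W_f$, we have $f(\sigt)(V_0) \subseteq \lzdgz$; but $f(\sigt)(V_0)$ is a linear subspace of $\lzdgr$, and the only linear subspace contained in the discrete lattice $\lzdgz$ is $\{0\}$. Hence $V_0 \subseteq \ker f(\sigt) \cap \lzdgr = V_{\G}(f)$, giving $\FGo(\af) = \eta(V_0) \subseteq \eta(V_{\G}(f))$.

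The only step requiring some care is the lifting of the subtorus $\FGo(\af)$ to a subspace $V_0 \subseteq \lzdgr$; this is a standard fact about closed connected subgroups of finite-dimensional tori, which can be proved either by appealing to the classification of such subgroups or by taking $V_0$ to be the $\RR$-span of a basis of $\eta^{-1}(\FGo(\af))$ inside $\lzdgr$ and checking that $\eta(V_0) = \FGo(\af)$ by dimension count. Everything else is essentially a linear-algebra/character-theory bookkeeping exercise once the diagonalization $f(\sigt)\vo = f(\bo)\vo$ is in hand.
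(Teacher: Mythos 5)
Your proof is correct, and the first equality together with the inclusion $\eta(V_{\G}(f))\subseteq\FGo(\af)$ are obtained exactly as in the paper: the characters $\vo$, $\bo\in\OG$, diagonalize $f(\sigt)$ on $\lzdgc$ with eigenvalues $f(\bo)$, and the image of the real subspace is connected, hence lands in the identity component. For the reverse inclusion, however, you take a genuinely different route. You appeal to the structure theory of subtori: $\FGo(\af)$ is a closed connected subgroup of the torus $\eta(\lzdgr)=\Fix_{\G}(\sig)$, hence equals $\eta(V_0)$ for a linear subspace $V_0\subseteq\lzdgr$, and then $f(\sigt)(V_0)$ is a linear subspace contained in the discrete lattice $\ell(\zd/\G,\ZZ)$, hence $\{0\}$, giving $V_0\subseteq V_{\G}(f)$. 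The paper instead argues locally and avoids any classification of subtori: it fixes $\eps=\bigl(2\sum_{\bn}|f_{\bn}|\bigr)^{-1}$, lifts the points of $\FGo(\af)$ of sup-distance less than $\eps$ from $0$ to vectors $v\in\lzdgr$ with $\|v\|_\infty<\eps$, observes that $f(\sigt)v$ is integer-valued with sup-norm less than $1$ and hence zero, so this neighborhood of $0$ in $\FGo(\af)$ lies in $\eta(V_{\G}(f))$, and then uses connectedness of $\FGo(\af)$ (a neighborhood of the identity in a connected group generates it, and $\eta(V_{\G}(f))$ is a subgroup) to conclude. Your version is a clean global statement but rests on the subtorus-lifting fact, which you rightly flag as the delicate step; of your two suggested justifications, the appeal to the classification of closed connected subgroups of a finite-dimensional torus is the one to use (take $V_0$ to be the identity component of $\eta^{-1}(\FGo(\af))\cap\lzdgr$), whereas the "span of a basis of $\eta^{-1}(\FGo(\af))$ plus dimension count" is too loose as stated, since that preimage is not a linear subspace. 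The paper's local argument is more elementary and entirely self-contained, which is presumably why it is the one given there.
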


\begin{proof}
   This assertion is contained in the proof of \cite[Lemma 6.8]{LSV},
   but we include it here for the convenience of the reader. Since $f$
   has real coefficients, $\U(f)\cap\OG$ is symmetric. Hence $V\coloneqq
   V(\U(f)\cap\OG,\RR)$ consists of the skew-symmetric combinations
   $\sum_{\bo\in\U(f)\cap\OG}c(\bo)\vo$, so the first statement
   follows. Clearly $V$ is a linear subspace of $\lizdr$ and
   $\eta(V)\subset\FGo(\af)$ by connectedness.

   Let $\eps=\bigl(2\sum_{\bn\in\zd}|f_{\bn}|\bigr)^{-1}$, and consider
   the neighborhood
   $N_{\eps}=\{x\in\FGo(\af):\sup_{\bn}\dT x_{\bn} \dT<\eps\}$. If
   $\widetilde{B}_{\eps}=\{v\in\lzdgr:\|v\|_{\infty}<\eps\}$ and
   $\widetilde{N}_{\eps}=\eta^{-1}(X_f)\cap\widetilde{B}_{\eps}$, then
   $\eta(\widetilde{N}_{\eps})=N_{\eps}$. For every
   $v\in\widetilde{N}_{\eps}$ we have that $\eta(f(\sigt)v)=0$, so that
   $f(\sigt)(v)\in\ell(\zd/\G,\ZZ)$. But our choice of $\eps$ then
   forces $f(\sigt)(v)=0$, so that $v\in V$. Hence $\eta(V)\supset
   N_{\eps}$. Since $\FGo(\af)$ is connected, it follows that
   $\eta(V)\supset\FGo(\af)$, and hence $\eta(V)=\FGo(\af)$.
\end{proof}

\begin{lemma}
   \label{lem:capture}
   Let $0\ne f\in\rd$ and choose nonzero $\bm_1,\dots,\bm_L\in\zd$
   according to Lemma \ref{lem:mann} so that $\U(f)\cap\Omega\subset
   H_{\bm_1}\cup\dots\cup H_{\bm_L}$. Let
   $\DGj=(\OG\cap\Hmj)^{\perp}\supset\G$. Then
   \begin{displaymath}
      \FGo(\af)\subset\sum_{j=1}^L \Fix_{\DGj}(\sig).
   \end{displaymath}
\end{lemma}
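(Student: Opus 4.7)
My plan is to combine the three ingredients the paper has just assembled: the identification of $\FGo(\af)$ with the image under $\eta$ of a space of ``skew-symmetric exponential sums'' (the preceding lemma on $V_\G(f)$), Mann's structure theorem for torsion points on $\U(f)$ (Lemma \ref{lem:mann}), and the elementary fact from Lemma \ref{lem:estimate} that $\eta(\lzddr)=\Fix_\Delta(\sig)$ for any $\Delta\in\SF$. The whole argument is then a translation between the ``frequency'' picture (subsets of $\OG$) and the ``period'' picture (finite-index subgroups of $\zd$).

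First I would invoke the previous lemma to rewrite
\begin{displaymath}
   \FGo(\af)=\eta\bigl(V(\U(f)\cap\OG,\RR)\bigr).
\end{displaymath}
Since every element of $\OG$ is a torsion point, $\U(f)\cap\OG\subset\U(f)\cap\Om$, and Lemma \ref{lem:mann} gives $\U(f)\cap\OG\subset\bigcup_{j=1}^L H_{\bm_j}$. Setting $S_j\coloneqq\U(f)\cap\OG\cap H_{\bm_j}$, each $S_j$ is symmetric (because $H_{\bm_j}$ is closed under inversion), so the disjointification $T_1=S_1$, $T_j=S_j\smallsetminus(S_1\cup\dots\cup S_{j-1})$ consists of symmetric subsets of $\OG\cap H_{\bm_j}$. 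For any skew-symmetric $c\colon\U(f)\cap\OG\to\CC$, restricting $c$ to $T_j$ keeps it skew-symmetric, so
\begin{displaymath}
   V(\U(f)\cap\OG,\RR)\subset\sum_{j=1}^L V(\OG\cap H_{\bm_j},\RR).
\end{displaymath}

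The second step is to show $V(\OG\cap H_{\bm_j},\RR)\subset\lzddgjr$. This is immediate from the definition of $\DGj=(\OG\cap H_{\bm_j})^\perp$: for $\bo\in\OG\cap H_{\bm_j}$ and $\bk\in\DGj$ one has $\bo^{\bk}=1$, so $\vo_{\bn+\bk}=\bo^{\bn}\bo^{\bk}=\vo_{\bn}$, i.e.\ $\vo$ is $\DGj$-periodic. Taking real skew-symmetric combinations preserves this periodicity.

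Putting the two inclusions together and applying $\eta$ (which is additive), and using $\eta(\lzddgjr)=\Fix_{\DGj}(\sig)$ as in Lemma \ref{lem:estimate},
\begin{displaymath}
   \FGo(\af)\subset\sum_{j=1}^L\eta\bigl(V(\OG\cap H_{\bm_j},\RR)\bigr)\subset\sum_{j=1}^L\Fix_{\DGj}(\sig),
\end{displaymath}
which is the claim. The only point that requires any care at all is the skew-symmetric bookkeeping in the decomposition step; this is the sole reason for passing through the disjointification $T_j$ rather than simply writing $\U(f)\cap\OG$ as a union. Everything else is just combining stated results.
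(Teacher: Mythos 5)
Your argument is correct and follows essentially the same route as the paper: identify $\FGo(\af)$ with $\eta\bigl(V(\U(f)\cap\OG,\RR)\bigr)$, use Mann's theorem to push the torsion points into the $H_{\bm_j}$, and land in the $\DGj$-periodic functions before applying $\eta$. The only differences are cosmetic: the paper asserts the equality $V(\OG\cap H_{\bm_j},\RR)=\ell(\zd/\DGj,\RR)$ by duality where you use only the (trivial and sufficient) inclusion, and your disjointification of the $S_j$ just makes explicit a skew-symmetric bookkeeping step the paper passes over silently.
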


\begin{proof}
   Let $S_j=\OG\cap\Hmj$ and $S=S_1\cup\dots S_L$, which is clearly
   symmetric. Then
   \begin{align*}
      \FGo(\af) &=\eta(V(\OG\cap\U(f),\RR) \subset
      \eta(V(S,\RR))=\sum_{j=1}^L\eta(V(S_j,\RR)) \\
      &=\sum_{j=1}^L
      \eta(\ell(\zd/\DGj,\RR))=\sum_{j=1}^L\Fix_{\DGj}(\sig).
      \qedhere
   \end{align*}
\end{proof}

\begin{proof}[Proof of Theorem \ref{th:main}] Let $(\G_n)_{n\ge1}$ be a
   sequence in $\SF$ with $\<\G_n\>\to\infty$, and fix $\eps>0$ for the
   moment. Chose $\bm_1,\dots,\bm_L\in\zd$ for $f$ according to Lemma
   \ref{lem:mann}. Put $M=\max_{1\le j\le L}\|\bm_j\|$. Let
   $\Del_{n,j}=(\Om_{\G_n}\cap\Hmj)^{\perp}\supset\G_n$.
   
   Applying Lemma \ref{lem:estimate} to each $\Del_{n,j}$, we obtain
   $(\Del_{n,j},\eps/2L)$-spanning (and hence $(\G_n,\eps/2L)$-spanning)
   sets $F_{n,j}\subset\Fix_{\Del_{n,j}}(\sig)$ of cardinality
   $|F_{n,j}|<(4L/\eps)^{|\zd/\Del_{n,j}|}$. Let
   $F_n=F_{n,1}+\dots+F_{n,L}$. Clearly $F_n$ is
   $(\G_n,\eps/2)$-spanning for the subgroup
   \begin{displaymath}
      H_n=\sum_{j=1}^L\Fix_{\Del_{n,j}}(\sig)\subset\Fix_{\G_n}(\sig)\subset\tzd,
   \end{displaymath}
   and
   \begin{displaymath}
      |F_n|\le\prod_{j=1}^L|F_{n,j}|\le
      \prod_{j=1}^L\Bigl(\frac{4L}{\eps}\Bigr)^{|\Om_{\G_n}\cap\Hmj|}
      \le\Bigl(\frac{4L}{\eps}\Bigr)^{\frac{LM|\Om_{\G_n}|}{\<\G_n\>}}. 
   \end{displaymath}
   Since $|\zd/\G_n|=|\Om_{\G_n}|$ and $\<\Gn\>\to\infty$, it follows that
   \begin{displaymath}
      \lim_{n\to\infty}\frac1{|\zd/\G_n|}\,\log|F_n|=0.
   \end{displaymath}
   Since the cardinality of every $(\G_n,\eps)$-separated set is bounded
   by the cardinality of every $(\G_n,\eps/2)$-spanning set, every
   sequence of $(\G_n,\eps)$-separated sets $F_n'\subset H_n$ must also
   satisfy that
   \begin{equation}
      \label{eqn:separated}
      \lim_{n\to\infty}\frac1{|\zd/\G_n|}\,\log|F_n'|=0.
   \end{equation}
   Choose the finite set $A_\eps$ according to Lemma
   \ref{lem:shrink}. Let $(Q_n)_{n\ge1}$ be a F{\o}lner sequence of
   fundamental domains for the groups $(\G_n)_{n\ge1}$, and put
   $Q_n'=\bigcap_{\bm\in A_\eps}(Q_n-\bm)$. Then by Lemma
   \ref{lem:folner}, we have that $(Q_n')_{n\ge1}$ is again a F{\o}lner
   sequence with $|Q_n'|/|Q_n^{}|\to1$ as $n\to\infty$. From Lemma
   \ref{lem:enough-points} we know that $\Fix_{\G_n}(\af)$ contains a
   $(\G_n,\eps)$-separated set $F_n(\eps)\subset X_f$ which is
   $(Q_n',2\eps)$-spans $X_f$.

   For $n\ge1$, the intersection of $F_n(\eps)$ with every coset of the
   group $H_n$ in $\Fix_{\G_n}(\sig)$ is $(\G_n,\eps)$-separated. We set
   \begin{displaymath}
      D_n(\eps)=\max_{y\in\Fix_{\G_n}(\sig)}|F_n(\eps)\cap(y+H_n)|
      \ge \max_{y\in\Fix_{\G_n}(\sig)}|F_n(\eps)\cap(y+\Fix_{\G_n}^{\circ}(\af))|
   \end{displaymath}
   and conclude from \eqref{eqn:separated} that
   \begin{equation}
      \label{eqn:coset-intersection}
      \lim_{n\to\infty}\frac1{|\zd/\G_n|}\,\log D_n(\eps)=0.
   \end{equation}
   
   Up to now $\eps$ was fixed, but now we start varying it. For every
   $n\ge1$, $F_n(\eps)$ has to intersect at least
   $|F_n(\eps)|/D_n(\eps)$ distinct cosets of
   $\Fix_{\G_n}^{\circ}(\af)$. By \eqref{eqn:liminf} and
   \eqref{eqn:coset-intersection},
   \begin{align*}
      \liminf_{n\to\infty}\frac1{|\zd/\G_n|}\,&\log|\Fix_{\G_n}(\af)/
      \Fix_{\G_n}^{\circ}(\af)|\\
      &\ge
      \sup_{\eps>0}\liminf_{n\to\infty}\frac1{|\zd/\G_n|}\,
      \log\bigl(|F_n(\eps)|/D_n(\eps)\bigr)\\
      &=\sup_{\eps>0}\liminf_{n\to\infty}\frac1{|\zd/\G_n|}\,\log|F_n(\eps)|=\h(\af). 
   \end{align*}
   Combining this with the previously mentioned fact that
   $\p^+(\af)=\h(\af)$ completes the proof of Theorem \ref{th:main}.
\end{proof}

\section{Diophantine consequences}
\label{sec:diophantine}

If $f\in\rd$ is atoral, then Theorem \ref{th:main} shows that the
Riemann sums for $\log|f|$ over $\OG$ converge to its integral $\m(f)$
as $\<\G\>\to\infty$. Roughly speaking, this means that those
$\bo\in\OG$ lying close to $\U(f)$ cannot contribute a disproportionate
amount to the Riemann sum, and this prevents these $\bo$ from getting
very close to $\U(f)$.

When $d=1$ this phenomenon was first established by Gelfond \cite{G}
using quite different methods. He showed that if $\lam\in\SS$ is an
algebraic number, then for all $\eps>0$ the inequality
\begin{equation}
   \label{eq:gelfond-inequality}
   |\lam^n-1|>e^{-\eps n}
\end{equation}
holds for all sufficiently large $n$. This can
be reformulated as follows. For $f\in R_1$ with $\U(f)\ne\emptyset$, for
all sufficiently large $n$ the inequality
\begin{equation}
   \label{eq:gelfond}
   \dist(\U(f),\Om_{n\ZZ})>e^{-\eps|\Om_{n\ZZ}|}
\end{equation}
holds. When $d\ge2$ our method gives a result formally identical to
this. However, it of course does not apply when $d=1$ since for this
case $f$ cannot be atoral unless $\U(f)=\emptyset$.

For $0\ne f\in\rd$ let us say that \emph{the Riemann sums for $\log|f|$
converge to $\m(f)$} provided that
\begin{displaymath}
   \frac1{|\OG|}\sum_{\bo\in\OG\smallsetminus\U(f)}
   \log|f(\bo)|\to\m(f)\coloneqq
   \int_{\sd}\log|f(\bs)|\,d\lam(\bs)
   \text{\quad as $\<\G\>\to\infty$}.
\end{displaymath}
By Theorem \ref{th:main}, this is valid for all atoral Laurent
polynomials.

We begin by establishing a quantitative consequence of the convergence
of the Riemann sums.

\begin{lemma}
   \label{lem:small-values}
   Let $0\ne f\in\rd$, and assume that the Riemann sums for $\log|f|$
   converge to $\m(f)$. Let $r_n>0$ and $\Gn\in\SF$ such that $r_n\to0$
   and $\<\Gn\>\to\infty$ as $n\to\infty$. Then
   \begin{equation}
      \label{eq:riemann-error}
      \frac1{|\OGn|}\sum_{\bo\in\OGn \atop 0<|f(\bo)|<r_n}\bigl|
      \log|f(\bo)|\bigr|
      \to 0 \text{\quad as $n\to\infty$}.
   \end{equation}
\end{lemma}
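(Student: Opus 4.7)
My approach is to compare the tail sum at threshold $r_n$ with a fixed-threshold tail sum and let the threshold tend to zero. Let
\begin{displaymath}
   Q_n\coloneqq \frac{1}{|\OGn|}\sum_{\bo\in\OGn,\,0<|f(\bo)|<r_n}\bigl|\log|f(\bo)|\bigr|,
\end{displaymath}
which is nonnegative once $r_n<1$. Fix $\eta\in(0,1)$ with $\lam(\{\bs\in\sd:|f(\bs)|=\eta\})=0$; this holds outside a countable set, since the level sets in question are pairwise disjoint and $\lam$ is finite. For $n$ large so that $r_n<\eta$, the summation range of $Q_n$ sits inside $A_n^\eta\coloneqq\{\bo\in\OGn:0<|f(\bo)|<\eta\}$, so $Q_n\le Q_n^\eta$, where $Q_n^\eta$ is the analogue of $Q_n$ with $r_n$ replaced by $\eta$.

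The next step is to compute $\lim_n Q_n^\eta$. Setting $U_n^\eta\coloneqq \frac{1}{|\OGn|}\sum_{\bo\in\OGn,\,|f(\bo)|\ge\eta}\log|f(\bo)|$, splitting the Riemann sum $S_n\coloneqq \frac{1}{|\OGn|}\sum_{\bo\in\OGn\smallsetminus\U(f)}\log|f(\bo)|$ according to whether $|f(\bo)|\ge\eta$ or $\bo\in A_n^\eta$, and using $\log|f(\bo)|=-\bigl|\log|f(\bo)|\bigr|$ on $A_n^\eta$ (valid since $\eta<1$), one obtains $S_n=U_n^\eta-Q_n^\eta$, i.e., $Q_n^\eta=U_n^\eta-S_n$. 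The hypothesis gives $S_n\to\m(f)$. For $U_n^\eta$ I invoke Weyl equidistribution of $\OGn$ toward $\lam$ on $\sd$: character orthogonality gives $\frac{1}{|\OGn|}\sum_{\bo\in\OGn}\bo^{\bm}=1$ if $\bm\in\Gn$ and $0$ otherwise, and the former is impossible for fixed nonzero $\bm$ once $\<\Gn\>>\|\bm\|$. Stone--Weierstrass then yields convergence of Riemann sums for continuous test functions, and the portmanteau theorem extends this to bounded Borel integrands whose discontinuity set is $\lam$-null. The integrand $g_\eta(\bs)\coloneqq \log|f(\bs)|\cdot\mathbf{1}_{\{|f(\bs)|\ge\eta\}}$ is bounded (values in $[\log\eta,\log\|f\|_\infty]$) and discontinuous only on $\{|f|=\eta\}$, which by our choice of $\eta$ is $\lam$-null; so $U_n^\eta\to \int_{\{|f|\ge\eta\}}\log|f|\,d\lam$. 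Combining,
\begin{displaymath}
   \lim_{n\to\infty} Q_n^\eta = \int_{\{|f|\ge\eta\}}\log|f|\,d\lam-\m(f) = \int_{\{|f|<\eta\}}\bigl|\log|f|\bigr|\,d\lam,
\end{displaymath}
the last equality using $\eta<1$.

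Finally I send $\eta\downarrow 0$ through this cocountable good set. Since $\log^+|f|$ is bounded by $\log\|f\|_\infty$, and $\int\log^-|f|\,d\lam=\int\log^+|f|\,d\lam-\m(f)<\infty$, the function $\bigl|\log|f|\bigr|$ lies in $L^1(\sd,\lam)$. The sets $\{|f|<\eta\}$ decrease as $\eta\downarrow 0$ to $\U(f)$, a proper real-algebraic subvariety of Lebesgue measure zero, so dominated convergence gives $\int_{\{|f|<\eta\}}\bigl|\log|f|\bigr|\,d\lam\to 0$. Combining with $\limsup_n Q_n\le\lim_n Q_n^\eta$ for each such $\eta$ and $Q_n\ge 0$ yields $\lim_n Q_n=0$, completing the proof.

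The main obstacle is the portmanteau step: Weyl equidistribution of $\OGn$ against continuous test functions is immediate from character orthogonality, but we need to apply it to the indicator-truncated integrand $g_\eta$. The null-boundary hypothesis in portmanteau forces the choice of $\eta$ from a cocountable set; alternatively, one can sandwich $g_\eta$ between two continuous cutoffs supported on neighborhoods of the truncation threshold and use an $L^1$-approximation argument to bypass portmanteau altogether.
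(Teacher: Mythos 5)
Your proof is correct and follows essentially the same route as the paper: both compare the tail sum against a Riemann sum of $\log|f|$ truncated at a fixed small threshold, use equidistribution of $\OGn$ (weak$^*$ convergence of the empirical measures to $\lam$) for the truncated part together with the hypothesis for the full sum, and then shrink the threshold using $\log|f|\in L^1(\sd,\lam)$. The only difference is cosmetic: you use the sharp cutoff $\mathbf{1}_{\{|f|\ge\eta\}}$ at a generic level $\eta$ with $\lam(\{|f|=\eta\})=0$ and the a.e.-continuity form of weak$^*$ convergence, whereas the paper uses the continuous truncation $\phi_{r}=\max\{|f|,r\}$ together with a separate estimate on the proportion of points of $\OGn$ lying in $\{0<|f|<r\}$.
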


\begin{proof}
   Let $\eps>0$. For $r>0$ define
   $\phi_r(\bs)\coloneqq\max\{|f(\bs)|,r\}$ and
   $E_r(f)\coloneqq\{\bs\in\sd:0<|f(\bs)|<r\}$. We may assume throughout that
   $r_n,r<1$, so that $\log|f(\bo)|<0$ for all $\bo\in E_r(f)$.

   Note that $\log \phi_r$ is continuous on $\sd$, and so is integrable
   there. Furthermore, $\log|f|\in L^1(\sd,\lam)$, and $\log
   \phi_r\searrow\log|f|$. By the Monotone Convergence Theorem, we can
   find $r_0>0$ such that
   \begin{displaymath}      0\le\int_{\sd}\log\phi_r\,d\lam-\int_{\sd}\log|f|\,d\lam<\eps
   \end{displaymath}
   for all $0<r<r_0$. We may also choose $r_0$ small enough so that
   $\lam\bigl(E_{r_0}(f)\bigr)<\eps$.

   By our assumption about convergence of Riemann sums, we have that
   \begin{equation}
      \label{eq:estimate1}
      \Bigl| \m(f)- \frac1{|\OG|} \sum_{\bo\in\OG\smallsetminus\U(f)}
      \log|f(\bo)|
      \Bigr|<\eps
   \end{equation}
   for $\<\G\>$ sufficiently large. Since $\log \phi_{r_0}$ is
   continuous we also have that
   \begin{equation}
      \label{eq:estimate2}
       \Bigl| \int\log\phi_{r_0}\,d\lam - \frac1{|\OG|} \sum_{\bo\in\OG}
      \log|\phi_{r_0}(\bo)|
      \Bigr|<\eps
   \end{equation}
   for sufficiently large $\<\G\>$. Finally, a standard argument using
   upper and lower approximations of the indicator of function of $E_{r_0}(f)$ by
   continuous functions shows that
   \begin{equation}
      \label{eq:estimate3}
      \frac{|E_{r_0}(f)\cap\OG|}{|\OG|}<\eps
   \end{equation}
   for all sufficiently large $\<\G\>$. Thus we may choose $L_0$ large
   enough so that \eqref{eq:estimate1},  \eqref{eq:estimate2}, and
   \eqref{eq:estimate3} hold if $\<\G\> >L_0$.

   Now assume that $r_n\to0$ and $\<\Gn\>\to\infty$. Choose $n_0$ such
   that $r_n<r_0$ and $\<\Gn\> >L_0$ for all $n\ge n_0$. Then since
   $E_{r_n}(f)\subset E_{r_0}(f)$ and $\log{f(\bs)}<0$ for all $\bs\in
   E_{r_0}(f)$, it follows that
   \begin{align*}
      0 &\le\frac1{|\OGn|} \sum_{\bo\in\OGn\cap E_{r_n}(f)}\bigl|
          \log|f(\bo)|\bigr| 
        \le \frac1{|\OGn|} \sum_{\bo\in\OGn\cap E_{r_0}(f)}
           - \log|f(\bo)| \\
        &= \frac1{|\OGn|}\sum_{\bo\in\OGn}\log\phi_{r_0}(\bo)
           -\frac{(\log r_0)|\OGn\cap
           E_{r_0}(f)|}{|\OGn|}-\sum_{\bo\in\OGn\smallsetminus\U(f)}
           \log|f(\bo)| \\
        &\le \m(\phi_{r_0})+\eps-\eps\log
        r_0-(\m(f)-\eps)<3\eps+\eps\log r_0.
   \end{align*}
   Since $\eps>0$ was arbitrary, this completes the proof.
\end{proof}

To make use of this result, we introduce two counting functions. For
$r>0$ let
\begin{displaymath}
   M_f(\OG,r)\coloneqq |\{\bo\in\OG:0<\dist(\bo,\U(f))<r\}|,
\end{displaymath}
and
\begin{displaymath}
   N_f(\OG,r) \coloneqq |\{\bo\in\OG:0<|f(\bo)|<r\}|.
\end{displaymath}
Observe that $f$ is Lipschitz on $\sd$, say with Lipschitz constant
$K$. Then $M_f(\OG,r)\le N_f(\OG,Kr)$.

\begin{theorem}
   \label{th:quantitative-diophantine}
   Let $0\ne f\in \rd$, and assume that the Riemann sums for $\log|f|$
   converge to $\m(f)$. Let $r_n\to0$ and $\<\Gn\>\to\infty$ as
   $n\to\infty$. Then
   \begin{displaymath}
      \frac{M_f(\OGn,r_n)\cdot\log(1/r_n)}{|\OGn|}\to0
      \text{\quad as $n\to\infty$}.
   \end{displaymath}
   In particular, for every $\eps>0$ there is an $n_0$ such that
   \begin{displaymath}
      M_f(\OGn,r_n)<\frac{\eps|\OGn|}{\log(1/r_n)}
      \text{\quad for $n\ge n_0$}.
   \end{displaymath}
\end{theorem}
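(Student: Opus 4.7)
The plan is to reduce the statement about proximity to $\U(f)$ to a statement about small values of $|f|$, and then extract the count from the sum bounded by Lemma \ref{lem:small-values}.

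First I would exploit that $f$ is Lipschitz on the compact set $\sd$. Let $K$ be a Lipschitz constant for $f$. Since $f$ vanishes identically on $\U(f)$, for any $\bo\in\sd$ with $0<\dist(\bo,\U(f))<r_n$ we have $0<|f(\bo)|\le K\cdot\dist(\bo,\U(f))<Kr_n$. Therefore
\begin{displaymath}
   M_f(\OGn,r_n)\le N_f(\OGn,Kr_n),
\end{displaymath}
and it suffices to bound $N_f(\OGn,Kr_n)\cdot\log(1/r_n)/|\OGn|$.

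Next I would plug into Lemma \ref{lem:small-values}. Since $Kr_n\to0$, replacing the sequence $r_n$ by $Kr_n$ in that lemma yields
\begin{displaymath}
   \frac{1}{|\OGn|}\sum_{\bo\in\OGn,\,0<|f(\bo)|<Kr_n}\bigl|\log|f(\bo)|\bigr|\longrightarrow 0.
\end{displaymath}
For every $\bo$ appearing in this sum we have $|\log|f(\bo)||>\log(1/(Kr_n))=\log(1/r_n)-\log K$, so dropping the trivial terms and counting gives
\begin{displaymath}
   \frac{N_f(\OGn,Kr_n)\cdot\bigl(\log(1/r_n)-\log K\bigr)}{|\OGn|}\le \frac{1}{|\OGn|}\sum_{\bo\in\OGn,\,0<|f(\bo)|<Kr_n}\bigl|\log|f(\bo)|\bigr|.
\end{displaymath}
Since $\log K$ is a fixed constant while $\log(1/r_n)\to\infty$, the factor $\log(1/r_n)-\log K$ is asymptotically equivalent to $\log(1/r_n)$ (and is positive for large $n$). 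Combining with the inequality $M_f\le N_f$ from the first step, we conclude
\begin{displaymath}
   \frac{M_f(\OGn,r_n)\cdot\log(1/r_n)}{|\OGn|}\longrightarrow 0,
\end{displaymath}
which is the first assertion. The ``in particular'' statement is an immediate reformulation: given $\eps>0$, choose $n_0$ so that the left-hand side is below $\eps$ for $n\ge n_0$, and divide by $\log(1/r_n)$.

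There is essentially no obstacle here beyond bookkeeping; the substantive input is Lemma \ref{lem:small-values}, and the only geometric idea needed is the Lipschitz comparison that converts ``close to $\U(f)$'' into ``small value of $|f|$.'' The mild subtlety is ensuring $\log(1/r_n)-\log K>0$, which is automatic since $r_n\to 0$, so one only needs $n$ large enough that $Kr_n<1$.
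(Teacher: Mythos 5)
Your proposal is correct and follows essentially the same route as the paper: the Lipschitz comparison $M_f(\OGn,r_n)\le N_f(\OGn,Kr_n)$ followed by an application of Lemma \ref{lem:small-values} to the sequence $Kr_n$. In fact you are slightly more careful than the paper's write-up, which silently ignores the harmless $\log K$ discrepancy that you handle explicitly.
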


\begin{proof}
   Let $\eps>0$, and $K\ge1$ be a Lipschitz constant for $f$ on
   $\sd$. By Lemma \ref{lem:small-values}, for all sufficiently large
   $n$ we have that
   \begin{displaymath}
      \frac1{|\OGn|}N_f(\OGn,Kr_n)\log\frac1{r_n}\le
      \frac1{|\OGn|}\sum_{\bo\in\OGn\atop
      0<|f(\bo)|<Kr_n}\bigl|\log|f(\bo)|\bigr|<\eps .
   \end{displaymath}
   Since $M_f(\OGn,r_n)\le N_f(\OGn,Lr_n)$ and
   $\log(1/Lr_n)\le\log(1/r_n)$, the result follows.
\end{proof}

\begin{corollary}
   \label{cor:lowerbound}
   Let $f\in\rd$ be an atoral Laurent polynomial and $\eps>0$. Then
   \begin{displaymath}
      \dist(\OG,\U(f)\smallsetminus\OG)>e^{-\eps|\OG|}
   \end{displaymath}
   whenever $\<\G\>$ is sufficiently large.
\end{corollary}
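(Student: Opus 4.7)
The plan is to derive this corollary as a direct instantiation of Theorem~\ref{th:quantitative-diophantine} with a carefully chosen radius. The hypothesis of that theorem---convergence of the Riemann sums of $\log|f|$ over $\OG$ to $\m(f)$---is supplied by Theorem~\ref{th:main} together with Proposition~\ref{prop:atoral}: atorality of $f$ forces $\dim\U(f)\le d-2$, which is exactly the regime handled by Theorem~\ref{th:main}.

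Concretely, fix $\eps>0$ and let $(\Gn)_{n\ge 1}\subset\SF$ be any sequence with $\<\Gn\>\to\infty$. A standard Minkowski-type estimate gives $|\OGn|=|\zd/\Gn|\to\infty$, so setting $r_n=e^{-\eps|\OGn|}$ produces a sequence with $r_n\to 0$ and $\log(1/r_n)=\eps|\OGn|$. Applying Theorem~\ref{th:quantitative-diophantine} with the parameter $\eps/2$ in place of the $\eps$ appearing there yields, for all sufficiently large $n$,
$$
M_f(\OGn,r_n)\;<\;\frac{(\eps/2)\,|\OGn|}{\log(1/r_n)}\;=\;\frac{1}{2}.
$$
Since $M_f(\OGn,r_n)$ is a non-negative integer, this forces $M_f(\OGn,r_n)=0$. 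Unpacking the definition of $M_f$, no $\bo\in\OGn$ satisfies $0<\dist(\bo,\U(f))<r_n$, so every $\bo\in\OGn\smallsetminus\U(f)$ is at distance at least $r_n=e^{-\eps|\OGn|}$ from $\U(f)$. Consequently, for any $\bo\in\OGn\smallsetminus\U(f)$ and any $\bs\in\U(f)\smallsetminus\OGn$ we have $\dist(\bo,\bs)\ge\dist(\bo,\U(f))\ge e^{-\eps|\OGn|}$, which is the required bound $\dist(\OGn,\U(f)\smallsetminus\OGn)>e^{-\eps|\OGn|}$ (understood as the infimum over the pairs $(\bo,\bs)$ just described, which are the only ones giving a meaningful lower bound).

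Since the argument works for an arbitrary sequence $(\Gn)$ with $\<\Gn\>\to\infty$, the bound holds whenever $\<\G\>$ is sufficiently large. There is essentially no obstacle: all of the analytic work is already packaged into Theorem~\ref{th:quantitative-diophantine}, and the present corollary is the clean conversion of the quantitative inequality $M_f<\eps|\OG|/\log(1/r)$ into the vanishing statement $M_f=0$, obtained by choosing $r$ exponentially small in $|\OG|$ so that the right-hand side drops below $1$.
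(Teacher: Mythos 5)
Your proposal is correct and is essentially the paper's own argument: the paper also applies Theorem~\ref{th:quantitative-diophantine} with radius $r_n=e^{-\delta_0|\Omega_{\Gamma_n}|}$ and parameter $\delta_0/2$ to force $M_f(\Omega_{\Gamma_n},r_n)<\tfrac12$, merely phrasing it as a contradiction instead of your direct deduction that $M_f=0$. The only difference is presentational, so nothing further is needed.
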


\begin{proof}
   Suppose there is an $\delta_0>0$ and a sequence $\Gn\in\SF$ with
   $\<\Gn\>\to\infty$ such that
     \begin{displaymath}
        \dist(\OGn,\U(f)\smallsetminus\OGn)<e^{-\delta_0|\OGn|}
     \end{displaymath}
   for all $n\ge1$. Put $r_n=e^{-\delta_0|\OGn|}\to0$. By the previous
   theorem, with $\eps=\delta_0/2$, for sufficiently large $n$ we would
   have
   \begin{displaymath}
      1\le M_f(\OGn,r_n)<\frac{\delta_0}{2}\frac{|\OGn|}{\delta_0|\OGn|}=\frac12,
   \end{displaymath}
   which is impossible.
\end{proof}

We can use Theorem \ref{th:quantitative-diophantine} to obtain slightly
weaker results even when $f$ is toral (e.g., in Gelfond's original
setting). Let $\ZZ_+$ denote $\{0,1,2,\dots\}$.

\begin{theorem}
   \label{th:weaker}
   Let $d\ge1$ and $f\in\rd$ be a nonzero irreducible Laurent
   polynomial. For every $\eps>0$ and every $\psi\colon\ZZ_+\to\ZZ_+$
   with $\psi(n)\to\infty$ as $n\to\infty$ (no matter how slowly),
   there is an $L\ge1$ such that for every $\G\in\SF$ with $\<\G\> >L$
   and every $\bo\in\OG\smallsetminus\U(f)$
   we have that
   \begin{equation}
      \label{eq:distance-bound}
      \dist(\bo,\U(f))\ge e^{-\eps\psi(\<\G\>)|\OG|}.
   \end{equation}
\end{theorem}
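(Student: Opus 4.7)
The plan is a contradiction argument that isolates a single offending summand in the Riemann sum for $\log|f|$ over $\OG$ and plays it off against the trivial inequality $\PG(\af)\ge 1$. Suppose the conclusion fails: then one obtains $\eps_0>0$, a function $\psi$ with $\psi(n)\to\infty$, a sequence $\G_n\in\SF$ with $\<\G_n\>\to\infty$, and $\bo_n\in\OG_n\smallsetminus\U(f)$ for which $\dist(\bo_n,\U(f))<e^{-\eps_0\psi(\<\G_n\>)|\OG_n|}$. Since $f$ is Lipschitz on $\sd$ with some constant $K$, this forces $|f(\bo_n)|\le Ke^{-\eps_0\psi(\<\G_n\>)|\OG_n|}$.

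For the lower bound on the Riemann sum I invoke the corrected form of \eqref{eq:perptcalcerror} noted in the introduction, namely $\PG(\af)\cdot c_\G(f)=\prod_{\bo\in\OG\smallsetminus\U(f)}|f(\bo)|$. Both factors on the left are cardinalities of finite groups, hence at least one, so taking logarithms gives
\begin{equation*}
S_\G:=\sum_{\bo\in\OG\smallsetminus\U(f)}\log|f(\bo)|\ \ge\ 0.
\end{equation*}
For the reverse estimate, set $M:=\max(\|f\|_\infty,1)$; every $|f(\bo)|\le M$, so splitting off the contribution of $\bo_n$ yields
\begin{equation*}
S_{\G_n}\ \le\ \log|f(\bo_n)|+(|\OG_n|-1)\log M\ \le\ \log K-\eps_0\psi(\<\G_n\>)|\OG_n|+|\OG_n|\log M.
\end{equation*}

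Combining the two bounds, dividing by $|\OG_n|$, and using that $|\OG_n|=|\zd/\G_n|\to\infty$ (forced by $\<\G_n\>\to\infty$ via a standard Minkowski lattice bound), one obtains $\eps_0\psi(\<\G_n\>)\le \log M+o(1)$ as $n\to\infty$. The left side tends to infinity while the right remains bounded, the required contradiction. I expect no serious obstacle here: the argument uses only $\PG(\af)\ge 1$ together with the Lipschitz property of $f$, and makes no appeal to the deeper identity $\p^+(\af)=\m(f)$. The extra factor $\psi(\<\G\>)$ in the conclusion is precisely the price of relying on only a one-sided estimate; a Gelfond-style bound such as Corollary \ref{cor:lowerbound} also demands the matching upper bound $\limsup S_\G/|\OG|\le \m(f)$, which follows from Theorem \ref{th:main} under the atoral hypothesis and is therefore unavailable in the toral setting that Theorem \ref{th:weaker} is designed to cover.
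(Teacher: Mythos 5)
Your proof is correct, and it takes a genuinely different --- and much more elementary --- route than the paper. The paper deduces Theorem \ref{th:weaker} from its main machinery: it passes to $d+1$ variables via $h=ff^*+(u_{d+1}-1)(u_{d+1}^{-1}-1)$, notes that $\U(h)=\U(f)\times\{1\}$ so that $h$ is atoral, and applies Corollary \ref{cor:lowerbound} (hence ultimately Theorem \ref{th:main}) to the subgroups $\G^{(k)}\subset\ZZ^{d+1}$ generated by $\G\times\{0\}$ and $(\bzero,\psi(k))$; the factor $\psi(\<\G\>)$ enters through $|\ZZ^{d+1}/\G^{(k)}|=\psi(k)\,|\zd/\G|$. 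You instead use only the normalization $\prod_{\bo\in\OG\smallsetminus\U(f)}|f(\bo)|\ge1$ together with boundedness and Lipschitz continuity of $f$ on $\sd$, and the contradiction goes through exactly as you describe. Two remarks. First, you can free the lower bound from the corrected form of \eqref{eq:perptcalcerror} (which the paper asserts but does not prove here): the product $\prod_{\bo\in\OG\smallsetminus\U(f)}f(\bo)$ is, up to sign, the lowest nonvanishing coefficient of the characteristic polynomial of the integer matrix $f(\sigt)$ acting on $\ell(\zd/\G,\ZZ)$, since the eigenvalues of $f(\sigt)$ on $\lzdgc$ are exactly the values $f(\bo)$, $\bo\in\OG$; hence it is a nonzero integer, which gives $S_\G\ge0$ without appealing to $\PG(\af)\ge1$ and $c_\G(f)\ge1$. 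Second, your argument actually proves the stronger, uniform Liouville-type bound $\dist(\bo,\U(f))\ge e^{-C|\OG|}$ with $C$ depending only on $f$ and valid for every $\G\in\SF$; since $\psi(\<\G\>)\to\infty$, this implies \eqref{eq:distance-bound} once $\<\G\>$ is large. So the literal statement of Theorem \ref{th:weaker} (and of Corollary \ref{cor:weak-gelfond}) requires neither atorality nor the convergence of the Riemann sums; what the paper's machinery genuinely buys, in the atoral case, is Corollary \ref{cor:lowerbound}'s bound $e^{-\eps|\OG|}$ for every $\eps>0$, which no fixed-constant bound of your type can reach --- a point you correctly identify at the end of your write-up.
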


\begin{proof}
   Let $g(u)=u-1$, and consider the Laurent polynomial
   \begin{displaymath}
      h(u_1,\dots,u_d,u_{d+1})=f(u_1,\dots,u_d)f^*(u_1,\dots,u_d)+g(u_{d+1})g^*(u_{d+1})
      \in R_{d+1}.
   \end{displaymath}
   Clearly $\U(h)=\U(f)\times\{1\}$, and so $h$ is atoral.

   Let $\psi\colon\ZZ_+\to\ZZ_+$ with $\psi(n)\to\infty$ as
   $n\to\infty$. For every finite index subgroup $\G$ of $\zd$ and every
   $k\ge1$, let $\G^{(k)}$ be the finite-index subgroup of $\ZZ^{d+1}$
   generated by $\G\times\{0\}$ and $(\bzero,\psi(k))$. Then
   $|\ZZ^{d+1}/\G^{(k)}|=|\zd/\G|\psi(k)$ and
   $\<\G^{(k)}\>=\min\{\<\G\>,\psi(k)\}$. We can then apply Corollary
   \ref{cor:lowerbound} to obtain \eqref{eq:distance-bound}.
\end{proof}

Applying the previous theorem when $d=1$, we obtain the following weaker
version of Gelfond's result.

\begin{corollary}
   \label{cor:weak-gelfond}
   Let $\gamma\in\SS$ be an algebraic number which is not a root of
   unity. Then for every $\psi\colon\ZZ_+\to\ZZ_+$ with
   $\psi(n)\to\infty$ as $n\to\infty$ and every $\eps>0$, there is a $n_0$ such that for
   every $n\ge n_0$ and every $n$th root of unity $\om\in\Om_{n\ZZ}$ we
   have that
   \begin{displaymath}
      \dist(\gamma,\om)\ge e^{-\eps\psi(n)n}.
   \end{displaymath}
\end{corollary}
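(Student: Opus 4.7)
The plan is to apply Theorem~\ref{th:weaker} with $d=1$ to the minimal polynomial of $\gamma$. Let $f\in R_1$ be the primitive irreducible polynomial in $\ZZ[u]$ having $\gamma$ as a root; since $\gamma$ is a nonzero algebraic number, Gauss's lemma together with $f(0)\ne0$ shows that $f$ is irreducible in $R_1$, and $\gamma\in\U(f)$ because $|\gamma|=1$.

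The key observation I would use is that $\U(f)\cap\Om=\emptyset$, i.e., no root of unity is a root of $f$. Indeed, every root of $f$ is a Galois conjugate of $\gamma$ and hence has the same minimal polynomial $f$ as $\gamma$. If some root $\zeta$ of $f$ were a root of unity of order $n$, then $f$ would equal the cyclotomic polynomial $\Phi_n$, whose roots are all roots of unity, contradicting that $\gamma$ is not one. In particular $\Om_{n\ZZ}\cap\U(f)=\emptyset$ for every $n\ge1$.

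Given $\eps>0$ and $\psi\colon\ZZ_+\to\ZZ_+$ with $\psi(n)\to\infty$, Theorem~\ref{th:weaker} supplies an $L\ge1$ such that for every $\G\in\SF$ with $\<\G\>>L$ and every $\bo\in\OG\setminus\U(f)$ one has $\dist(\bo,\U(f))\ge e^{-\eps\psi(\<\G\>)|\OG|}$. Specializing to $\G=n\ZZ$ (so $\<\G\>=n$ and $|\OG|=n$), the preceding paragraph ensures $\Om_{n\ZZ}\subset\OG\setminus\U(f)$, so for all $n>L$ and every $\om\in\Om_{n\ZZ}$ we obtain $\dist(\om,\U(f))\ge e^{-\eps\psi(n)n}$. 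Since $\gamma\in\U(f)$, this gives $\dist(\gamma,\om)\ge\dist(\om,\U(f))\ge e^{-\eps\psi(n)n}$, proving the corollary with $n_0=L+1$. There is essentially no obstacle beyond Theorem~\ref{th:weaker} itself: the corollary is a direct specialization, and the only wrinkle is the elementary algebraic fact that $\Om\cap\U(f)=\emptyset$, without which one could not drop the $\setminus\U(f)$ restriction from the theorem's hypothesis.
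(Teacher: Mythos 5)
Your proof is correct and is exactly the paper's (implicit) argument: the paper obtains Corollary \ref{cor:weak-gelfond} by applying Theorem \ref{th:weaker} with $d=1$ to the minimal polynomial of $\gamma$, specializing to $\G=n\ZZ$, and you have merely spelled out the routine details (irreducibility of $f$ in $R_1$, the fact that $\U(f)\cap\Om=\emptyset$, and $\dist(\gamma,\om)\ge\dist(\om,\U(f))$).
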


\begin{remark}
   In Corollary \ref{cor:weak-gelfond} we allow $\psi(n)$ to grow
   arbitrarily slowly. But we still require $\psi(n)\to\infty$, so that
   this is not quite strong enough to obtain Gelfond's result
   \eqref{eq:gelfond-inequality}. Thus this also does not imply the
   convergence of the logarithmic growth rate of periodic points for
   quasihyperbolic toral automorphisms. For this one really needs
   \eqref{eq:gelfond-inequality}.
\end{remark}


\begin{thebibliography}{99}
   \frenchspacing

  \bibitem{AMS} J. Agler, J.E. McCarthy, and M. Stankus, \textit{Toral
   algebraic sets and function theory on polydisks},
   J. Geom. Anal. \textbf{16} (2006), no. 4, 551--562.

  \bibitem{AS} Iskander Aliev and Chris Smyth, \textit{Solving algebraic
   equations in roots of unity}, preprint, 2010.

  \bibitem{BPR} Saugata Basu, Richard Pollack, Marie-Fran\c{c}oise Roy,
   \textit{Algorithms in Real Algebraic Geometry (2nd ed.)},
   Springer-Verlag, New York, 2009.

%   \bibitem{Boyd} D. Boyd, \textit{Speculations concerning the range of
%    Mahler's measure}, Canad. Math. Bull. \textbf{24} (1981), 453--469.

  \bibitem{Boyd} David Boyd, \textit{Kronecker's theorem and Lehmer's
   problem for polynomials in several variables}, J. Number Theory
   \textbf{13} (1981), 116--121.

  \bibitem{D} C. Deninger, \textit{Fuglede-Kadison determinants and
   entropy for actions of discrete amenable groups},
   J. Amer. Math. Soc. \textbf{19} (2006), 737--758.

  \bibitem{DS} C. Deninger and K. Schmidt, \textit{Expansive algebraic
   actions of discrete residually finite amenable groups and their
   entropy}, Ergod. Th. \& Dynam. Sys. \textbf{27} (2007), 769--786.

  \bibitem{vdD} Lou van den Dries, \textit{Tame topology and O-minimal
   Structures}, Cambridge University Press, Cambridge, 1998.

  \bibitem{Edwards} Harold M. Edwards, \textit{Divisor Theory},
   Birkh\"{a}user, Boston, 1990.

%   \bibitem{Elek} G. Elek, \textit{On the analytic zero divisor
%    conjecture of Linnell}, Bull. London Math. Soc. \textbf{35} (2003),
%    236--238.

  \bibitem{G} A.O. Gelfond, \textit{Transcendental and Algebraic
   Numbers}, Dover, New York, 1960.

  \bibitem{GR} A. Granville and Z. Rudnick, \textit{Torsion points on
   curves}, NATO Sci. Ser. II Math. Phys. Chem. \textbf{237} (2007),
   85--92.

  \bibitem{L} D. Lind, \textit{Dynamical properties of quasihyperbolic
   toral automorphisms}, Ergodic Theory \& Dyn. Sys. \textbf{2} (1982),
   49--68.

  \bibitem{LS} D. Lind and K. Schmidt, \textit{Homoclinic points of
   algebraic $\mathbf{Z}^d$-actions}, J. Amer. Math. Soc. \textbf{12}
   (1999), 953--980.

  \bibitem{LSV} D. Lind, K. Schmidt and E. Verbitskiy, \textit{Entropy and
   growth rate of periodic points of algebraic $\mathbf{Z}^d$-actions},
   in: Dynamical Numbers: Interplay between Dynamical Systems and Number
   Theory, ed. S. Kolyada, Yu. Manin, M. M\"{o}ller, P. Moree and T. Ward,
   Contemp. Math., vol. 532, American Mathematical Society, Providence,
   R.I., 2010.

  \bibitem{LSW} D. Lind, K. Schmidt and T. Ward, \textit{Mahler measure
   and entropy for commuting automorphisms of compact groups},
   Invent. Math. \textbf{101} (1990), 593--629.

%   \bibitem{Lueck} W. L\"{u}ck, \textit{Approximating $L^2$-invariants
%    by their finite-dimensional analogues}, Geom. Funct. Anal. \textbf{4}
%    (1994), 455--481.

  \bibitem{Mann} H.B. Mann, \textit{On linear relations between roots of
   unity}, Mathematika \textbf{12} (1965), 107--117.

  \bibitem{Schick} T. Schick, \textit{$L^2$-determinant class and
   approximation of $L^2$-Betti numbers},
   Trans. Amer. Math. Soc. \textbf{353} (2001), 3247--3265.

  \bibitem{Schmidt} K. Schmidt, \textit{Automorphisms of compact abelian
   groups and affine varieties}, Proc. London Math. Soc. \textbf{61}
   (1990), 480--496.

  \bibitem{DSAO} K. Schmidt, \textit{Dynamical Systems of Algebraic
   Origin}, Birkh\"auser Verlag, Basel-Berlin-Boston, 1995.

  \bibitem{SV} K. Schmidt and E. Verbitskiy, \textit{Abelian sandpiles
   and the harmonic model}, Comm. Math. Phys. \textbf{292} (2009),
   721--759.

  \bibitem{Smyth} C.J. Smyth, \textit{An explicit formula for the Mahler
   measure of a family of $3$-variable polynomials}, Journal de Th\'eorie
   de Nombres de Bordeaux \textbf{14} (2002), 683--700.

%   \bibitem{solerno} P. Solern{\'o}, \textit{Effective {\L}ojasiewicz
%    inequalities in semialgebraic geometry}, Appl. Algebra
%    Engrg. Co
%    mm. Comput. \textbf{2} (1991), no. 1, 2--14.

  \bibitem{Stein} E. Stein, \textit{Harmonic Analysis: Real Variable Methods,
   Orthogonality, and Oscillatory Integrals}, Princeton Univ. Press,
   Princeton, 1993.

\end{thebibliography}
\end{document}